\documentclass[reqno,a4paper,11pt]{amsart}
\usepackage{color}
\usepackage{amsmath,amssymb,amsfonts,amsthm,enumerate}
\usepackage{mathrsfs,accents,todonotes}
\usepackage[T1]{fontenc}
\usepackage[active]{srcltx}
\usepackage{epsfig}
\usepackage{graphicx}

\numberwithin{equation}{section}
\newtheorem{theorem}{Theorem}[section]
\newtheorem{lemma}[theorem]{Lemma}
\newtheorem{definition}[theorem]{Definition}
\newtheorem{remark}[theorem]{Remark}
\newtheorem{proposition}[theorem]{Proposition}
\newtheorem{corollary}[theorem]{Corollary}
\newtheorem{example}[theorem]{Example}

\newenvironment{Proof}{\removelastskip\par\medskip 
\noindent{\em Proof.}
\rm}{\penalty-20\null\hfill$\square$\par\medbreak}

\newcommand{\N}{\mathbb{N}}
\newcommand{\R}{\mathbb{R}}

\newcommand{\T}{\mathbb{T}}
\newcommand{\XX}{\mathbb{X}}
\newcommand{\YY}{\mathbb{Y}}

\newcommand{\eps}{\varepsilon}

\newcommand{\Haus}[1]{{\mathscr H}^{#1}} 
\newcommand{\Leb}[1]{{\mathscr L}^{#1}} 
\newcommand{\Borel}[1]{{\mathscr B}(#1)}

\newcommand\uno{{\bf 1}} 

\newcommand{\bb}{{\bf b}}
\newcommand{\BB}{{\bf B}}
\newcommand{\dive}[1]{{\rm div}(#1)}
\newcommand{\Probabilities}[1]{{\mathscr P}\bigl(#1\bigr)}
\newcommand{\Measures}[1]{{\mathscr M}\bigl(#1\bigr)}
\newcommand{\Measuresp}[1]{{\mathscr M}_+\bigl(#1\bigr)}
\newcommand{\ev}{{\sf ev}}
\newcommand{\eeta}{{\mbox{\boldmath$\eta$}}}
\newcommand{\ssigma}{{\mbox{\boldmath$\sigma$}}}

\newcommand{\res}{\mathop{\hbox{\vrule height 7pt width .5pt depth 0pt
\vrule height .5pt width 6pt depth 0pt}}\nolimits}

\newcommand{\trplus}[2]{{\rm Tr}^{{\rm in}}(#1,#2)}

\begin{document}

\title[Regular Lagrangian Flows and well-posedness]{Non-branching properties of Regular Lagrangian Flows\\ and well posedness of the continuity equation\\ in the class of nonnegative solutions}
\author{L. Ambrosio}
\address{Scuola Normale Superiore, Pisa}
\email{luigi.ambrosio@sns.it}
\author{S. Bianchini}
\address{SISSA, Trieste}
\email{bianchin@sissa.it}
\author{F. Renzi}
\address{Scuola Normale Superiore, Pisa}
\email{federico.renzi@sns.it}
\author{F. Vitillaro}
\address{Scuola Normale Superiore, Pisa}
\email{federico.vitillaro@sns.it}
\date{October 2026}

\begin{abstract}
In this paper we deal with the theory of Regular Lagrangian Flows, as initiated in \cite{DL_89} and axiomatized in \cite{Am_2004}. In the first part of the paper we show in Theorem~\ref{thm:main} (see also Corollary~\ref{cor:main}) that the decomposition of $\Leb{1+d}$-almost all the space time in a family of (essentially) disjoint characteristics, the technique at the basis of the well posedness in \cite{ABC14} (for planar, divergence-free and autonomous vector fields), and in \cite{Bia_Bo_2020} (for vector fields having a suitable regularity in space, including BV vector fields) is not only sufficient for well posedness, but also necessary. This shows that in many cases the
typical uniqueness statement of the DiPerna-Lions theory, namely that
the path $\XX(\cdot,x)$ is (essentially) unique given the initial position $x$, can, as in the classical theory, be
strengthened by saying that the paths $\XX(\cdot,x)$ (essentially) do not branch. 
As the simple Example~\ref{example:nogood} shows, when we consider vector fields with singular divergence well posedness results may hold only for solutions that do not change sign. In the second part of the paper we revisit \cite{Am_Cri_Man_2005,Am_Del_Maly_2007} obtaining a well posedness
result for special vector fields of bounded deformation and nonnegative solutions, under a mild
regularity assumption \eqref{eq:time_regularity} with respect to the time variable, weaker
than the space-time $SBD$ regularity assumption made in \cite{Am_Del_Maly_2007} for nearly incompressible vector fields.
Finally, we revisit carefully the constructions of
Section~9 of \cite{Bia_Bo_2020}, to see how they provide existence and uniqueness for \eqref{eq:CE} assuming $BV$ regularity in space, in the class of nonnegative bounded solutions.
\end{abstract}

\maketitle

\tableofcontents

\section{Introduction}

Throughout this paper, $T\in (0,\infty)$ and a Borel vector field $\bb:(0,T)\times\R^d\to\R^d$ satisfying
\begin{equation}\label{eq:growth}
\frac{|\bb|}{1+|x|}\in L^1\bigl((0,T);L^\infty(\R^d)\bigr)+L^1\bigl((0,T);L^1(\R^d)\bigr)
\end{equation}
are given, so that we can write $\bb=\bb^1+\bb^2$, where
$$\frac{|\bb^1|}{1+|x|}\in L^1\bigl((0,T);L^1(\R^d)\bigr),\quad \frac{|\bb^2|}{1+|x|}\in L^1\bigl((0,T);L^\infty(\R^d)\bigr).$$
Occasionally we use the notation $\bb_t(x)=\bb(t,x)$.
We consider the associated continuity equation
\begin{equation}\label{eq:CE}
\frac{d}{dt}\rho_t+\dive{\bb_t\rho_t}=0\qquad\text{in $(0,T)\times\R^d$}
\end{equation}
always understood in the sense of distributions (see Section~\ref{sec:prem} for more specifications, including also the Cauchy initial value problem)
and the corresponding ODE
\begin{equation}\label{eq:ODE}
\frac{d}{dt}\gamma(t)=\bb(t,\gamma(t))=\bb_t(\gamma(t)),
\end{equation}
where by solution we always mean an integral absolutely continuous map $\gamma:[0,T]\to\R^d$.

As we will be mostly concerned with the uniqueness issue, many other variants are possible and will not be considered here, for instance replacing $\R^d$ with the 
$d$-dimensional torus or replacing $\R^d$ by sufficiently regular domains $\Omega\subseteq\R^d$ such that $\bb_t\cdot\nu_{\Omega}=0$ on $\partial\Omega$ 
in the sense of distributions (in the latter case, the results can be obtained from the global case just by extending $\bb_t$ to $0$ out of $\Omega$).

\medskip

After the pioneering work \cite{DL_89} dealing with Sobolev vector fields, the theory of Regular Lagrangian Flows, introduced in \cite{Am_2004} provides
a general framework to establish uniqueness of bounded solutions to \eqref{eq:CE}, consistency with the Lagrangian representation
provided by solutions to \eqref{eq:ODE} as well as stability results. Its relevance includes also more general PDE's, as for instance
the Navier-Stokes equation, see for instance the recent paper \cite{Galeati}.  The principle underlying the RLF axiomatization is the possibility to select
characteristics (i.e. solutions to the ODE) in such a way that for all times $t\in [0,T]$ there is (in a quantitative way) no concentration. Under suitable
regularity assumptions on the vector fields (which include for instance $BV$ regularity \cite{Am_2004,Bia_Bo_2020}, split structure \cite{MR3569235, LeBris_Lions_2004, Champagnat_Jabin_2010},  convolution with singular integrals \cite{MR3078074}) it is known that this no concentration constraint leads to $\Leb{d}$-pathwise uniqueness, namely there is a 
$\Leb{d}$-essentially unique selection of characteristics $\XX(t,x)$, the RLF, satisfying this property. 

\medskip

The aim of this paper is threefold: first we review different axiomatizations of RLF, with the necessity to include as in \cite{Am_2004} also
{\it generalized} RLF to obtain existence and uniqueness of RLF (as in Optimal Transport, where existence and uniqueness of transport maps can come as a consequence
of uniqueness of optimal plans), and show how nonbranching properties stronger than pathwise uniqueness come just as a consequence; then, revisiting
the Eulerian techniques of \cite{Am_Cri_Man_2005,Am_Del_Maly_2007}, we provide a new well posedness result for space-time vector fields $\bb$ of bounded deformation ($BD$) having $SBD$ regularity with respect to
the space variable and possibly singular divergence (recall that even for the case of divergence-free $BD$ vector fields the existence of the RLF is still
an open problem); finally, we revisit the fundamental techniques introduced in \cite{Bia_Bo_2020} (Lagrangian representations, reduction to a family of 1-dimensional
problems via partition by characteristics)  
to see how they provide uniqueness for \eqref{eq:CE} in the class of nonnegative bounded solutions. We exploit
also the more recent superposition principle \cite{ARV} for 1-currents with locally finite mass to cover vector fields that are not compactly supported and we relax
the bounded compression condition of \cite{Am_Del_Maly_2007} and \cite{Bia_Bo_2020} to a one-sided one, see Definition~\ref{def1sidednearlyincompressible}, still sufficient for forward uniqueness in the class of nonnegative solutions.

\medskip

In connection with the first issue, notice that in the classical theory of ODE's backward uniqueness for {\it all} final conditions grants not only pathwise uniqueness, but also the
property that solutions $\XX(t,x),\,\XX(t,y)$ starting at $t=0$ from $x\neq y$ do not intersect in the future. So we may ask ourselves whether
the GRLF axiomatization is sufficiently strong to provide also some control on the rate of intersection of the characteristics $\XX(t,x)$.
We provide a positive answer in Corollary~\ref{cor:main}, in the simpler case when we have forward and backward uniqueness for the PDE (see
Definition~\ref{deffnb} and Theorem~\ref{thm:main} for the more precise one-sided version): indeed, we
prove that there is a $\Leb{d}$-negligible set $\mathcal N$ of initial conditions such that $\{\XX(t,x)\}_{x\in\R^d\setminus\mathcal N}$
are pairwise disjoint. 
 In particular, if the transported density $\rho$ is everywhere positive, these characteristics provide a space-time partition of $\Leb{1+d}$-almost all of $(0,T)\times\R^d$. 
The idea of the proof is inspired by \cite{Bia_Bo_2020}, where under some regularity assumptions on the vector field satisfied by $BV$
vector fields (namely the existence of sufficiently many so-called ``cylinders of approximate flow'') an untanglement
procedure is built to provide a partition in characteristics, suitable to prove uniqueness of \eqref{eq:CE} by disintegration (and 
then uniqueness of the GRLF comes as a consequence). In the subsequent paper \cite{BB21} it is shown how just a forward untanglement
property (see Definition~\ref{deffnb}) is sufficient to provide uniqueness (this allows to cover other classes of vector fields). 
Here we prove, by reversing the procedure, that this untanglement
property is also necessary. Namely assuming that for some reason we know
the GRLF to be unique, the untanglement just comes as an additional property; in this connection, recall that for some
classes of vector fields the uniqueness of the GRLF is known to come only from Eulerian techniques, via well posedness of \eqref{eq:CE}. Hence,
this abstract result provides one more bridge between the Eulerian and Lagrangian viewpoints. It is also
crucial, for the validity of the proof, to assume uniqueness at the level of the GRLF, as the measures we build on pairs of intersecting
characteristics (supplied by duality in Optimal Transport, as in \cite{Bia_Bo_2020}) are not necessarily concentrated on a graph.

\medskip

In connection with the second and third issues, we notice that the results of \cite{DL_89}, \cite{Am_2004} and of many subsequent papers grant uniqueness of solutions
to \eqref{eq:CE} even in the class of bounded signed solutions $\rho$, via the so-called renormalization property. However, as detailed in Theorem~\ref{thm:main}(a)(b),
 uniqueness of the GRLF is equivalent to uniqueness of solutions to \eqref{eq:CE} in the class of nonnegative $\rho$, so it is natural to look for sharper uniqueness results within this class. This problem is also closely related to the bounds on the distributional divergence $D\cdot\bb_t$ of $\bb_t$: the $L^\infty$ bound on divergence in \cite{DL_89} was already relaxed in \cite{Am_2004}, assuming $D\cdot\bb_t\ll\Leb{d}$ and with negative part of the density in
$L^\infty$. Since the condition $D\cdot\bb_t\ll\Leb{d}$ is not quantitative, it is natural to investigate what happens when
we allow $D\cdot\bb_t$ to be singular, but bounded from below by a constant multiple of $\Leb{d}$. This is precisely the case when
uniqueness of \eqref{eq:CE} may fail in the class of signed solutions, as the following simple example shows:

\begin{example} \label{example:nogood} Let $d=1$, $a\in\R$ and set
$$
\rho_t(x):=
\begin{cases}
 \,\,a          &\text{if $0<x<t$,}\\
 -a          &\text{if $-t<x<0$,}\\
 \,\,0           &\text{if $|x|\geq t$.}
\end{cases}
$$
Then $\rho_t$ solves \eqref{eq:CE} with $\rho_0=0$ and $\bb(x)={\rm sign}(x)$. 
\end{example}

Obviously this ill-posedness example can be replicated in higher dimensions. Via the concept of near incompressibility
(see Definition~\ref{def:nivf}) the bounds on the divergence of $\bb_t$ have already been relaxed, putting them in integral form, providing well
posedness results in \cite{Am_Cri_Man_2005,Am_Del_Maly_2007} and in \cite{Bia_Bo_2020}, respectively for $SBD$ vector fields and $BV$ vector fields. As we wrote,
we refine the results \cite{Am_Cri_Man_2005,Am_Del_Maly_2007} via Eulerian techniques and we revisit carefully the constructions of
Section~9 of \cite{Bia_Bo_2020}, to see how they provide existence and uniqueness for \eqref{eq:CE} in the class of nonnegative bounded solutions.  We observe that the vector field in the (prototypical) Example \ref{example:nogood} is easily seen not to be nearly incompressible, but still falls within our hypotheses.

\bigskip
{\bf Acknowledgements.} L.A. gratefully acknowledges the support of ITS of ETH, where part of this paper has been written during his stay as visiting scientist. L.A., F.R. and F.V. are partially supported by a grant of the Balzan
Foundation. F.R. and F.V. are members of INdAM-GNAMPA.

\bigskip{\bf  Use of AI.}  No AI tools were used in producing the mathematical content or text of this manuscript. AI tools were used only for
language editing and a final review of the paper.

\section{Preliminary definitions and results}\label{sec:prem}

{\bf Main notation.} The characteristic function of a set $E$ will be denoted by $\uno_E$. 

The space of Borel probability measures in $X$
and of nonnegative $\sigma$-additive Borel measures will be denoted respectively by $\Probabilities{X}$ and $\Measuresp{X}$,
while $\Measures{X}$ denotes the class of signed $\sigma$-additive measures on bounded Borel sets with locally finite total variation. The restriction
operator $\mu\mapsto\mu\res E$ induced by a Borel set $E\subseteq X$ and mapping $\Measuresp{X}$ to $\Measuresp{X}$ is defined by
$\mu\res E:=\uno_E\mu$.
The push forward operator $f_\#:\Measuresp{X}\to\Measuresp{Y}$ induced by a Borel map $f:X\to Y$ is defined by $f_\#\mu(E):=\mu(f^{-1}(E))$
for any Borel set $E\subseteq Y$. Notice that the operator $f_\#$ maps probability measures to probability measures.

If $\Omega\subseteq\R^d$ is an open set, the approximate jump set of a function $f\in L^1_{loc}(\Omega,\R^m)$ is the set of points $x\in\Omega$
such that the rescaled functions $f_{x,r}(y)=f(x+ry)$ converge in $L^1_{\rm loc}(\R^d)$ to the function
$$
\phi_{a,b,\nu}(y):=
\begin{cases}
a &\text{if $\langle y,\nu\rangle>0$};\\
b &\text{if $\langle y,\nu\rangle<0$,}
\end{cases}
$$
for some $a,\,b\in\R^m$ and $\nu\in {\bf S}^{d-1}$ with $a\neq b$. Remarkably, this set is Borel and countably $\Haus{d-1}$-rectifiable, see \cite{DelNin}.

We shall denote by $\ev_t:C([0,T];\R^d)\to\R^d$ the canonical evaluation map at time $t\in [0,T]$, namely $\ev_t(\gamma)=\gamma(t)$.

\medskip

\subsection{$BV$ and $BD$ vector fields}

In this section we introduce the basic notation and concepts relative to the $BV$ space of functions of bounded variation
and the $BD$ space of vector fields of bounded deformation in an open set $\Omega\subseteq\R^k$.
For the former a reference book is \cite{AFP_00}, for the latter the paper \cite{ACD_97}.

\begin{definition} [BV functions]
We say that $f\in L^1(\Omega)$ belongs to $BV(\Omega)$ if there exists a vector valued measure
$Df=(D_1f,\ldots,D_kf)$ with finite total variation representing the derivative of $f$ in the sense of
distributions, namely
$$
\int_\Omega f\frac{\partial\phi}{\partial x_i}\,dx=-\int_\Omega\phi\,dD_i f\qquad
\forall\phi\in C^\infty_c(\Omega),\,\,\,i=1,\ldots,k.
$$
\end{definition}

The definition can be extended, arguing componentwise, to vector fields. We shall denote by $D^af+D^s f$ the decomposition of 
$Df$ in absolutely continuous and singular part with respect to $\Leb{k}$ and say that $f\in SBV(\Omega)$ if $D^s f$
is concentrated on a countably $\Haus{k-1}$-rectifiable set.

\begin{definition}[BD functions]
We say that $\bb\in L^1(\Omega;\R^k)$ belongs to $BD(\Omega)$ if there exists a symmetric matrix of measures
$E\bb= (E\bb)_{ij}$, $1\leq i,j\leq k$, representing the symmetric part of the distributional derivative of $\bb$, namely
$$
\frac{1}{2}\int_\Omega \bigl(\bb_i\frac{\partial\phi}{\partial x_j}+\bb_j\frac{\partial\phi}{\partial x_i}\bigr)\,dx=
-\int_\Omega\phi\,dE_{ij}\bb\qquad\forall\phi\in C^\infty_c(\Omega),\,\,\,i,j=1,\ldots,k.
$$
\end{definition}

Also in this case we shall denote by $E^a \bb+E^s \bb$ the decomposition of 
$E\bb$ in absolutely continuous and singular part with respect to $\Leb{k}$ and say that $\bb\in SBD(\Omega)$ if $E^s \bb$
is concentrated on a countably $\Haus{k-1}$-rectifiable set.

\smallskip

In the following proposition we recall the structural properties of $BV$ and $BD$ spaces needed in this paper.

For any unit vector $\xi\in\R^k$ we denote by $\pi_\xi$ the hyperplane orthogonal to $\xi$ and
$$
\Omega_{y,\xi}=\{s\in\R: y+s\xi\in\Omega\},
$$ 
$$
\bb_{y,\xi}(s):=\bb(y+s\xi)\cdot\xi\qquad y\in\pi_\xi,\,\,\,s\in\Omega_{y,\xi}\subseteq\R.
$$

The following structural theorem provides detailed information on the structure of $E\bb$ in terms
of the one-dimensional distributional derivatives of $\bb_{y,\xi}$. We focus mostly on the jump
part, the relevant one for our purposes.

\begin{proposition}\label{structural}
For any unit vector $\xi$ one has $\langle E\bb\xi,\xi\rangle=\int_{\pi_\xi} D\bb_{y,\xi}\,d\Haus{k-1}(y)$, namely
$$
\int\phi(x)\, d\langle E\bb\xi,\xi\rangle(x)=\int_{\pi_\xi}\int_{\Omega_{y,\xi}}\phi(y+s\xi)\,dD\bb_{y,\xi}(s)\,d\Haus{k-1}(y)
$$
for any bounded Borel function $\phi$ with compact support. Analogously, 
$$
\langle E^a\bb\xi,\xi\rangle=\int_{\pi_\xi} D^a\bb_{y,\xi}\,d\Haus{k-1}(y), 
\qquad
|\langle E^a\bb\xi,\xi\rangle|=\int_{\pi_\xi} |D^a\bb_{y,\xi}|\,d\Haus{k-1}(y),
$$
$$
\langle E^s\bb\xi,\xi\rangle=\int_{\pi_\xi} D^s\bb_{y,\xi}\,d\Haus{k-1}(y),
\qquad
|\langle E^s\bb\xi,\xi\rangle|=\int_{\pi_\xi} |D^s\bb_{y,\xi}|\,d\Haus{k-1}(y).
$$
Finally, 
\begin{equation}\label{eq:sliceJ1}
\int_{\pi_\xi}\Haus{0}\res J_{\bb_{y,\xi}}\,d\Haus{k-1}(y)\leq\Haus{k-1}\res J_{\bb}
\end{equation}
and $\bb\in SBD(\Omega)$ implies $(\bb)_{y,\xi}\in SBV(\Omega_{y,\xi})$ for $\Haus{k-1}$-a.e. $y\in\pi_\xi$.
\end{proposition}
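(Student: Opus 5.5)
The plan is the classical slicing argument for $BD$ functions, as in \cite{ACD_97}: a Fubini decomposition of $\Omega$ along lines parallel to $\xi$, followed by a comparison of the parts of the measures.

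\emph{Identity for $E\bb$.} Fix $\xi$ and write $x=y+s\xi$ with $y\in\pi_\xi$, $s\in\Omega_{y,\xi}$. For $\phi\in C^\infty_c(\Omega)$ the definition of $E\bb$ with the components along $\xi$ gives
$$\int_\Omega \bb\cdot\xi\,\partial_\xi\phi\,dx=-\int_\Omega\phi\,d\langle E\bb\xi,\xi\rangle .$$
By Fubini the left side equals $\int_{\pi_\xi}\int_{\Omega_{y,\xi}}\bb_{y,\xi}(s)\,\phi'(y+s\xi)\,ds\,d\Haus{k-1}(y)$. The first task is to show that $\bb_{y,\xi}\in BV_{loc}(\Omega_{y,\xi})$ for a.e. $y$, with $\int_{\pi_\xi}|D\bb_{y,\xi}|(\Omega_{y,\xi})\,d\Haus{k-1}(y)\le|\langle E\bb\xi,\xi\rangle|(\Omega)$.

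To get this, mollify: $\bb_\eps=\bb*\varrho_\eps$ is smooth and $\langle E\bb_\eps\xi,\xi\rangle=\partial_\xi(\bb_\eps\cdot\xi)$. Fubini gives a uniform bound on $\int_{\pi_\xi}\int|\partial_s(\bb_\eps)_{y,\xi}|\,ds\,d\Haus{k-1}(y)$. Lower semicontinuity of the one-dimensional total variation along a subsequence, combined with Fatou, yields the bound. Once the slices are $BV$, the one-dimensional integration by parts identifies the inner integral with $-\int\phi\,dD\bb_{y,\xi}$, which gives the first formula. It then extends to bounded Borel $\phi$ with compact support by density and monotone class arguments.

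\emph{Total variations and absolutely continuous/singular parts.} The identity shows that $\mu:=\langle E\bb\xi,\xi\rangle$ is the superposition $\int D\bb_{y,\xi}\,d\Haus{k-1}(y)$, so $|\mu|\le\int|D\bb_{y,\xi}|$. For the reverse inequality, take a Borel set on which $\mu$ has a fixed sign and test with $\phi$ equal to its sign; sign cancellation between different slices would contradict the inequality already proved.

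The superposition $\int D^a\bb_{y,\xi}$ is absolutely continuous with respect to $\Leb{k}$, by Fubini. The superposition $\int D^s\bb_{y,\xi}$ is concentrated on a $\Leb{k}$-null Borel set $N$. One chooses $N$ such that each slice $N_{y}$ carries $D^s\bb_{y,\xi}$ and is $\Leb1$-null; this is the main measurability obstacle. I would handle it by taking $N$ to be the set where the one-dimensional derivative of $|D\bb_{y,\xi}|$ with respect to $\Leb1$ is infinite, which is Borel in $(y,s)$ when written through $\limsup$ of difference quotients. Uniqueness of the Lebesgue decomposition then gives the formulas for $E^a$ and $E^s$, together with their total variations, using the same sign argument.

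\emph{Jump inequality \eqref{eq:sliceJ1} and the $SBD$ statement.} For the jump inequality I would use the rectifiability of $J_\bb$. At $\Haus{k-1}$-a.e. $x\in J_\bb$ with $\nu\cdot\xi\neq0$ and $(\bb^+-\bb^-)\cdot\xi\neq0$, the slice through $x$ jumps at $x$. Conversely, slice jumps of $\bb_{y,\xi}$ lie, for a.e. $y$, in $J_\bb\cap(y+\R\xi)$. Comparing $\mu\res J_\bb$ with the slices, via the coarea formula on rectifiable sets, gives $\int\Haus0\res(J_\bb)_y\,d\Haus{k-1}(y)=|\nu\cdot\xi|\,\Haus{k-1}\res J_\bb\le\Haus{k-1}\res J_\bb$.

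Finally, if $E^s\bb$ is concentrated on a countably $\Haus{k-1}$-rectifiable set $R$, then $D^s\bb_{y,\xi}$ is concentrated on $R_y$ for a.e. $y$, by the $E^s$ formula. The coarea formula shows that $R_y$ is countable for $\Haus{k-1}$-a.e. $y$. A one-dimensional $BV$ function whose singular derivative lives on a countable set has no Cantor part, hence it is $SBV$.
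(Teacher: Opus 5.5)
The paper does not prove this proposition; it quotes it from \cite{ACD_97}. Measured against that standard slicing theory, your first steps are sound. The mollification and lower-semicontinuity bound $\int_{\pi_\xi}|D\bb_{y,\xi}|\,d\Haus{k-1}(y)\le|\langle E\bb\xi,\xi\rangle|$, combined with the reverse inequality that comes from the superposition identity, gives equality of the total variations. The slice-wise singular set $N$ gives the formulas for $E^a$ and $E^s$ by uniqueness of the Lebesgue decomposition. The $SBD$ statement follows from the $E^s$ formula, because a countably $\Haus{k-1}$-rectifiable set has countable slices for $\Haus{k-1}$-a.e. $y$; note that this step does not need \eqref{eq:sliceJ1}.

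The gap is \eqref{eq:sliceJ1}. The coarea formula on the rectifiable set $J_\bb$ already gives $\int_{\pi_\xi}\Haus{0}\res(J_\bb)_{y,\xi}\,d\Haus{k-1}(y)=|\nu_\bb\cdot\xi|\,\Haus{k-1}\res J_\bb$. So the whole content of \eqref{eq:sliceJ1} is the inclusion $J_{\bb_{y,\xi}}\subseteq(J_\bb)_{y,\xi}$ for $\Haus{k-1}$-a.e. $y$, which is exactly how the paper uses it in \eqref{eq:sliceJ2}.

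You assert this inclusion (``conversely, slice jumps lie in $J_\bb$'') without any argument. The statement you do sketch, that transversal jump points of $\bb$ produce slice jumps, is the reverse inclusion and plays no role in the inequality. Comparing $\langle E\bb\xi,\xi\rangle\res J_\bb$ with the slices only shows that the slices of this restricted measure are atomic. An atom of $D\bb_{y,\xi}$ outside $(J_\bb)_{y,\xi}$ would come from the slices of $\langle E^s\bb\xi,\xi\rangle\res(\Omega\setminus J_\bb)$, and you must prove that these are diffuse for a.e. $y$.

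Nothing in your earlier steps gives this, and it is not a soft measure-theoretic consequence either. Even the fact that $|E^s\bb|\res(\Omega\setminus J_\bb)$ vanishes on $\Haus{k-1}$-$\sigma$-finite sets (itself a nontrivial fine property of $BD$ from \cite{ACD_97}) would not suffice. Take $D\subseteq\pi_\xi$ and push forward $\Haus{k-1}\res D$ under $y\mapsto y+f(y)\xi$, where $f$ is a sufficiently irregular Borel function (for $k=2$, a typical Brownian path). This measure vanishes on all $\Haus{k-1}$-$\sigma$-finite sets, yet all its slices along $\xi$ are Dirac masses.

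Ruling this out requires the $BD$ structure, namely the slicing theorem for the jump set proved in \cite{ACD_97}. In $BV$ one could instead argue through the coarea formula and the slicing of sets of finite perimeter, which have no $BD$ counterpart. You need either to invoke that result or to supply a genuinely $BD$-specific argument; as written, your proof of \eqref{eq:sliceJ1} assumes what has to be shown.
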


Being $(J_\bb)_{y,\xi}=\{s\in\Omega_{y,\xi}:\ y+s\xi\in J_\bb\}$, from \eqref{eq:sliceJ1}, it follows that 
\begin{equation}\label{eq:sliceJ2} 
J_{\bb_{y,\xi}}\subseteq (J_\bb)_{y,\xi} \qquad\text{for $\Haus{k-1}$-a.e. $y\in\pi_\xi$.}
\end{equation}

\medskip

\subsection{Divergence-measure fields}

In this section we assume that $C:\Omega\subseteq\R^k\to\R^k$ is a locally bounded vector field
whose divergence, in the sense of distributions, is a locally finite Radon measure in
$\Omega$, denoted by $D\cdot C$. We will also denote by $D^a\cdot C$ and $D^s\cdot C$, respectively the absolutely
and singular parts of $D\cdot C$ with respect to $\Leb{k}$.

We denote by ${\mathcal M}_\infty(\Omega)$ the class of these
vector fields. Whenever we consider an open domain with $C^1$ boundary, $\nu_\Omega$ will stand
for its inner unit normal.

Given a domain with $C^1$ boundary $\Omega'\Subset\Omega$ we can 
define the ``inner'' trace of the normal component of $C$ on $\partial\Omega'$ as a distribution 
as follows:
\begin{equation}\label{trace}
-\langle\trplus{C}{\partial\Omega'},\varphi\rangle:=
\int_{\Omega'}\nabla\varphi\cdot C\,dx+\int_{\Omega'}\varphi\,d D\cdot C
\qquad\text{for any $\varphi\in C^\infty_c(\Omega)$.}
\end{equation}
This definition is obviously consistent with the Gauss--Green formula, in the case when
$C\in C^1(\overline{\Omega'},\R^k)$, and gives that the distribution is induced by the integration 
on $\partial\Omega'$ of $C\cdot\nu_{\Omega'}$.

It turns out (see \cite{Am_Cri_Man_2005}, \cite{ChenFrid} for a more detailed discussion) that this distribution is induced by the integration of an $L^\infty$ 
function on $\partial\Omega'$, that we will still denote by $\trplus{C}{\partial\Omega'}$,
and moreover this function depends only on $\partial\Omega'$ and its orientation, 
rather than on $\Omega'$. Namely if $\Sigma$ is a Borel set contained in $\partial\Omega_1\cap\partial\Omega_2$
and if $\nu_{\Omega_1}=\nu_{\Omega_2}$ on $\Sigma$, then
\begin{equation}\label{local}
\trplus{C}{\partial\Omega_1}=\trplus{C}{\partial\Omega_2}\qquad\text{$\Haus{k-1}$-a.e. on $\Sigma$.}
\end{equation}
Property \eqref{local}, together with the absolute continuity property
\begin{equation}
\text{$|D\cdot C|(E)=0$ for any $\Haus{k-1}$-negligible set $E\subseteq\Omega$}
\end{equation}
(see for instance \cite[Proposition~3.4]{Am_Cri_Man_2005}) can be used to define the trace of $C$
on any oriented $\Haus{k-1}$-rectifiable set $\Sigma\subset\Omega$ (with the orientation
provided by a unit normal to the approximate tangent space), as $\Haus{k-1}$-almost all of $\Sigma$ 
can be covered by a sequence $(K_i)$ of compact sets of $C^1$ oriented hypersurfaces. This
construction is independent, up to $\Haus{k-1}$-negligible sets, of the choice of $K_i$, but it does depend
on the orientation, and we shall adopt the notation $\trplus{C}{\nu_\Sigma}:\Sigma\to\R$ to emphasize this dependence. 

In particular, one has the following result (see for instance \cite[Proposition~3.4]{Am_Cri_Man_2005}):

\begin{proposition}\label{pbasic}
If $C\in {\mathcal M}_\infty(\Omega)$, then
$$ (D\cdot C)\res\Sigma=(\trplus{C}{\nu_\Sigma}+\trplus{C}{-\nu_\Sigma})\Haus{k-1}\res\Sigma$$
for any countably $\Haus{k-1}$-rectifiable set $\Sigma\subseteq\Omega$.\\ In particular,
$|D\cdot C|(\Sigma)=0$ implies $\trplus{C}{\nu_\Sigma}=-\trplus{C}{-\nu_\Sigma}$ 
$\Haus{k-1}$-a.e. on $\Sigma$.
\end{proposition}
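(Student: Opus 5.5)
We prove Proposition~\ref{pbasic} by localizing to a single $C^1$ hypersurface and applying the Gauss--Green definition \eqref{trace} on two thin domains, one on each side.

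\textbf{Reduction to a compact piece of a $C^1$ hypersurface.} Since $\Sigma$ is countably $\Haus{k-1}$-rectifiable, we cover $\Haus{k-1}$-almost all of it by disjoint Borel pieces $\Sigma_i\subseteq K_i$, where each $K_i$ is a compact subset of an oriented $C^1$ hypersurface $S_i$. The orientation of $S_i$ agrees with $\nu_\Sigma$ on $\Sigma_i$ (up to a sign, which we fix). The leftover set is $\Haus{k-1}$-negligible. By the absolute continuity property $|D\cdot C|(E)=0$ for $\Haus{k-1}$-null $E$, this set is also $|D\cdot C|$-negligible. Both sides of the claimed identity are countably additive measures on $\Sigma$, so it suffices to prove
\[
(D\cdot C)\res K=\bigl(\trplus{C}{\nu}+\trplus{C}{-\nu}\bigr)\Haus{k-1}\res K
\]
for a compact $K$ contained in a $C^1$ hypersurface $S$. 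Moreover, after a local $C^1$ straightening we may take $S$ to be a piece of the graph separating a small ball $B$ into two components $\Omega_1$ and $\Omega_2$. Here $\nu_{\Omega_1}=\nu$ and $\nu_{\Omega_2}=-\nu$ on $S\cap B$, and both domains have piecewise $C^1$ boundary; if needed, we smooth the corners away from $K$.

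\textbf{Gauss--Green on each side.} Take $\varphi\in C^\infty_c(B)$ and apply \eqref{trace} on $\Omega_1$ and on $\Omega_2$. Using the locality property \eqref{local} to identify the traces on $S\cap B$ with $\trplus{C}{\pm\nu}$, we get
\[
-\int_{S}\varphi\,\trplus{C}{\pm\nu}\,d\Haus{k-1}=\int_{\Omega_{1,2}}\nabla\varphi\cdot C\,dx+\int_{\Omega_{1,2}}\varphi\,dD\cdot C .
\]
The remaining parts of $\partial\Omega_j$ lie outside $\operatorname{supp}\varphi$, so they contribute nothing. Summing the two identities gives
\[
\int_{B}\nabla\varphi\cdot C\,dx+\int_{B\setminus S}\varphi\,dD\cdot C .
\]
By the definition of the distributional divergence, this equals $-\int_B\varphi\,dD\cdot C+\int_{B\setminus S}\varphi\,dD\cdot C=-\int_{S}\varphi\,dD\cdot C$. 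Hence $(D\cdot C)\res (S\cap B)=(\trplus{C}{\nu}+\trplus{C}{-\nu})\Haus{k-1}\res(S\cap B)$ as distributions. Since both sides are measures, the identity holds as measures, and restricting to $K$ gives the claim.

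\textbf{Main obstacle.} The delicate point is the independence of the traces from the chosen domains. We have to check that the trace defined on the artificial domains $\Omega_j$ coincides $\Haus{k-1}$-a.e. on $K$ with the intrinsic $\trplus{C}{\pm\nu_\Sigma}$. This is exactly what \eqref{local} provides, together with the independence of the construction from the covering $(K_i)$.

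A second, more technical point is that $\Omega_j$ is only piecewise $C^1$. This is harmless, because $\varphi$ vanishes near the corners; alternatively, one can round the corners inside $B\setminus\operatorname{supp}\varphi$.

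\textbf{The final assertion.} If $|D\cdot C|(\Sigma)=0$, the left-hand side vanishes, so $\trplus{C}{\nu_\Sigma}+\trplus{C}{-\nu_\Sigma}=0$ holds $\Haus{k-1}$-a.e. on $\Sigma$.
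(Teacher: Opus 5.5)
Your proof is correct and is essentially the standard argument behind this result, which the paper does not reprove but quotes from \cite[Proposition~3.4]{Am_Cri_Man_2005}. The argument applies the defining identity \eqref{trace} on the two sides of a $C^1$ piece of $\Sigma$, sums the two identities, and uses $\Leb{k}(S)=0$ together with locality \eqref{local}; it then passes to a general countably rectifiable $\Sigma$ via $|D\cdot C|\ll\Haus{k-1}$. Two pieces of bookkeeping are worth making explicit: the density $\trplus{C}{\nu_\Sigma}+\trplus{C}{-\nu_\Sigma}$ is invariant under $\nu_\Sigma\mapsto-\nu_\Sigma$, so the sign of $\nu_{S_i}$ relative to $\nu_\Sigma$ on $\Sigma_i$ is irrelevant; and since $\Haus{k-1}\res\Sigma$ need not be locally finite, the local integrability of this density on $\Sigma$ follows a posteriori from the piecewise identities, which give $|\trplus{C}{\nu_\Sigma}+\trplus{C}{-\nu_\Sigma}|\,\Haus{k-1}\res\Sigma=|D\cdot C|\res\Sigma$.
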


Finally, we recall the chain rule for traces (see \cite[Theorem~4.2]{Am_Cri_Man_2005}):

\begin{theorem} [Chain rule for traces]\label{change_traces}
If $w\in L^\infty_{\rm loc}(\Omega)$,  $C\in L^\infty_{\rm loc}\cap BD_{\rm loc}(\Omega)$ and 
$wC$ belongs to ${\mathcal M}_\infty(\Omega)$, then for any function $h\in C^1(\R)$ and any
oriented $\Haus{k-1}$-rectifiable set $\Sigma\subseteq\Omega$ one has
$$
\trplus{h(w)C}{\nu_\Sigma}=h\left(\frac{\trplus{wC}{\nu_\Sigma}}
{\trplus{C}{\nu_\Sigma}}\right)\trplus{C}{\nu_\Sigma}
\qquad\text{$\Haus{k-1}$-a.e. on $\Sigma$,}
$$
where the ratio $\trplus{wC}{\nu_\Sigma}/\trplus{C}{\nu_\Sigma}$ is arbitrarily defined at points 
where the trace $\trplus{C}{\nu_\Sigma}$ vanishes.
\end{theorem}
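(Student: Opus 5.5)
The plan is to reduce the statement to a one-dimensional blow-up computation on the rectifiable set $\Sigma$, in the spirit of \cite[Theorem~4.2]{Am_Cri_Man_2005}. First localize: by \eqref{local} and the fact that $\Haus{k-1}$-almost all of $\Sigma$ is covered by countably many compact pieces $K_i$ of oriented $C^1$ hypersurfaces, it suffices to prove the identity $\Haus{k-1}$-a.e. on one such $K\subset\partial\Omega'$, with $\Omega'\Subset\Omega$ a $C^1$ domain whose inner normal agrees with $\nu_\Sigma$ on $K$. The first thing to record is that $h(w)C\in{\mathcal M}_\infty$ at all, i.e. that $D\cdot(h(w)C)$ is a measure. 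This does not follow from $wC\in{\mathcal M}_\infty$ alone. I would obtain it by regularization: mollify, $w_\eps=w*\rho_\eps$, $C_\eps=C*\rho_\eps$, and write
$$
D\cdot(h(w_\eps)C_\eps)=h'(w_\eps)\bigl[D\cdot(w_\eps C_\eps)-w_\eps D\cdot C_\eps\bigr]+h(w_\eps)D\cdot C_\eps .
$$
Then control the commutator $D\cdot(w_\eps C_\eps)-D\cdot((wC)_\eps)$ using $C\in BD_{\rm loc}$. Only the symmetric part of $DC$ enters, since it acts on the product with a scalar after the usual DiPerna--Lions commutator manipulation; this is the point where $BD$ rather than $BV$ is enough. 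The outcome is a uniform local mass bound and hence a measure in the limit.

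Next, the trace identity at a density point. Fix $x_0\in K$ which is a Lebesgue point (with respect to $\Haus{k-1}\res K$) of the traces $\trplus{C}{\nu}$, $\trplus{wC}{\nu}$ and $\trplus{h(w)C}{\nu}$. Choose $x_0$ also so that $|D\cdot C|$, $|D\cdot(wC)|$ and $|D\cdot(h(w)C)|$ restricted to $\Omega'$ have zero $(k-1)$-dimensional upper density at $x_0$ relative to one-sided half balls. This holds $\Haus{k-1}$-a.e. by the standard density theorem, because these measures restricted to $\Omega'\setminus K$ do not charge $K$. Then blow up: set $C_r(y)=C(x_0+ry)$ and $w_r(y)=w(x_0+ry)$ on the half space $H=\{y\cdot\nu>0\}$. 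The plan is to show that, along a subsequence, $C_r\to\bar C$ and $w_rC_r\to\bar D$ weakly$^*$ in $L^\infty$. The weak formulation \eqref{trace} and the density conditions give $D\cdot\bar C=0$ and $D\cdot\bar D=0$ in $H$, with normal traces on $\partial H$ equal to the constants $c=\trplus{C}{\nu}(x_0)$ and $d=\trplus{wC}{\nu}(x_0)$.

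The main obstacle is the nonlinearity: weak convergence of $w_r$ and $C_r$ does not pass to $h(w_r)C_r$. Here the $BD$ hypothesis is essential. Since $|E C|(B_r(x_0)\cap\Omega')=o(r^{k-1})$ at $\Haus{k-1}$-a.e. $x_0$ (one-sided, excluding $K$), the rescaled fields satisfy $E C_r\to0$. Together with Korn-type compactness in $BD$ this gives strong $L^1_{\rm loc}$ convergence of $C_r$ to a rigid motion, which must be a constant $\bar C$, with $\bar C\cdot\nu=c$. If $c\neq0$, the divergence-free limit of $w_rC_r$ with strongly converging, nondegenerate normal component forces $w_r$ to converge, in the sense of Young measures, to something transported along the constant field $\bar C$ with inflow data $d/c$. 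I would use the renormalization argument for constant fields (explicit in one variable along the $\bar C$ direction) to show that the Young measure near $\partial H$ is the Dirac mass at $d/c$, in the averaged sense that suffices for normal traces. Then $\trplus{h(w)C}{\nu}(x_0)=h(d/c)\,c$. If $c=0$, the strong convergence of $C_r\cdot\nu$ to $0$ near the boundary, together with the boundedness of $h(w_r)$, gives a trace equal to $0$, consistent with the arbitrary value of the ratio. I expect the Young-measure step in the case $c\neq0$ to be the delicate one. If it resists, the fallback is a Fubini slicing along lines parallel to $\bar C$, reducing to the one-dimensional identity $(h(u))'=h'(u)u'$ for $BV$ functions of one variable.
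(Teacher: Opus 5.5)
The paper does not prove this statement; it quotes it from \cite[Theorem~4.2]{Am_Cri_Man_2005}. Most of your setup is fine: the localization, the choice of good points, the strong convergence $C_r\to\bar C$ coming from $|EC|(B_r(x_0)\cap\Omega')=o(r^{k-1})$, the identification of the weak$^*$ limit of $w_r$ as $d/c$ when $c\neq0$, and the case $c=0$.

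\textbf{The gap.} The step you call delicate is not a technicality; it is the whole theorem. If $\bar\nu$ is the Young measure of $(w_r)$, your own argument gives $\trplus{h(w)C}{\nu_\Sigma}(x_0)=c\,\langle\bar\nu,h\rangle$ for every $h$. So the statement is equivalent to $\bar\nu=\delta_{d/c}$, i.e.\ to $w$ having a strong one-sided trace at $x_0$.

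The limit problem cannot deliver this. It only says that $\langle\nu_y,h\rangle$ is constant along the lines of $\bar C$, with boundary value $t_h/c$, where $t_h=\trplus{h(w)C}{\nu_\Sigma}(x_0)$ is known only for affine $h$. Every constant Young measure with barycenter $d/c$ satisfies all these relations. So the ``inflow datum $d/c$'' for the Young measure is assumed, not derived.

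The slicing fallback fails for the same reason. $w_r$ is transported by $C_r$, not by $\bar C$, and
$D\cdot(w_r\bar C)=D\cdot(w_rC_r)+D\cdot\bigl(w_r(\bar C-C_r)\bigr)$
is small only in a negative norm. Hence $w_r$ has no $BV$ control along lines parallel to $\bar C$. Slicing is available only for the weak limit, where the nonlinear information is already gone.

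Interior renormalization together with Lebesgue points of the traces is genuinely insufficient here. In Depauw's example \cite{Depauw_03} the field is $BV$ in $\{t>0\}$, yet the nontrivial solution with zero datum takes values $\pm1$. So the chain rule for traces fails on $\{t=0\}$ with $h(z)=|z|$. What rules this out is the smallness of $|EC|$ near $\Sigma$ up to $\Sigma$, and it must enter quantitatively, not only through $C_r\to\bar C$.

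\textbf{A way to close it.} Never pass $h$ through the one-sided blow-up; let $h$ act only on mollified functions, whose blow-ups converge pointwise.

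Near $K$, replace $C$ on the side opposite to $\Omega'$ by a bounded $W^{1,1}$ field whose trace on $\Sigma$ equals the inner trace $C^+$ of $C$ (Gagliardo's extension). Set $\tilde w:=w\uno_{\Omega'}$. Then:
\begin{itemize}
\item $\tilde C\in BD\cap L^\infty$ near $K$, with $|E\tilde C|(\Sigma)=0$;
\item $\tilde w\tilde C\in{\mathcal M}_\infty$, with $D\cdot(\tilde w\tilde C)\res\Sigma=d\,\Haus{k-1}\res\Sigma$;
\item by Proposition~\ref{pbasic}, $D\cdot(h(\tilde w)\tilde C)\res\Sigma=\bigl(\trplus{h(w)C}{\nu_\Sigma}-h(0)\,c\bigr)\Haus{k-1}\res\Sigma$, where $c,d$ are now functions on $\Sigma$.
\end{itemize}
Compute the same measure as the limit, as $\eps\to0$, of
$$D\cdot(h(\tilde w_\eps)\tilde C)=h'(\tilde w_\eps)\bigl[(D\cdot(\tilde w\tilde C))\ast\rho_\eps-T_\eps\bigr]+\bigl(h(\tilde w_\eps)-\tilde w_\eps h'(\tilde w_\eps)\bigr)D\cdot\tilde C,$$
with $T_\eps$ as in \eqref{teps} for $\tilde C,\tilde w$ and a radial kernel.

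By the $BD$ commutator estimate (\cite[Theorem~2.6]{Am_Cri_Man_2005}, the analogue of Theorem~\ref{tconcentra}), the defect is bounded by $|E^s\tilde C|+|D^s\cdot\tilde C|$, which does not charge $\Sigma$. The same holds for $D\cdot\tilde C$ and $D\cdot(wC)\res\Omega'$. So, by a density argument at $\Haus{k-1}$-a.e.\ $x\in\Sigma$, the density on $\Sigma$ comes only from
$d(x)\int h'\bigl(\tilde w_\eps(x+\eps z)\bigr)\rho(z)\,dz$.

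Now $\tilde w_\eps(x+\eps\,\cdot)$ is the mollification of the blow-up of $\tilde w$ at $x$. It therefore converges pointwise to $\lambda\,(\uno_{\{z\cdot\nu>0\}}\ast\rho)$ with $\lambda=d/c$, using only the weak limit you already identified. A one-dimensional integration gives $(h(\lambda)-h(0))/\lambda$, so the density is $c\,(h(\lambda)-h(0))$. Comparing with the trace expression yields $\trplus{h(w)C}{\nu_\Sigma}=c\,h(d/c)$.
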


\subsection{Renormalization}
 
In this section we assume that $\Omega=I\times A$ with $I\subseteq\R$ open interval and $A\subseteq\R^d$ open, and that
$\BB:\Omega\to\R^{d+1}$ has the form 
\begin{equation}\label{eq:newform}
\BB=(1,\bb)\quad\text{with}\quad \bb\in L^1_{\rm loc}\bigl(I;BD_{\rm loc}(A)\bigr)\cap L^\infty_{\rm loc}(\Omega;\R^d),
\end{equation}
so that, in particular, $\BB\in {\mathcal M}_\infty(\Omega)$ and the results of the previous section are applicable to $\BB$ (with $k=d+1$).
Arguing as in \cite{Am_Cri_Man_2005}, we analyze the behaviour of the commutators
\begin{equation}\label{teps}
T_\eps:=\left(D\cdot (\BB w)\right)\ast\rho_\eps- D\cdot (\BB (w\ast\rho_\eps)),
\end{equation}
where $w\in L^\infty_{\rm loc}(\Omega)$, $\rho:\R^d\to\R$ is an even, compactly supported convolution kernel 
and $\rho_\eps(z)=\eps^{-d}\rho(z/\eps)$, so that the convolution acts only with respect to the space variable. 
We define
\begin{equation}\label{Irho}
I(\rho):=\int_{\R^d}|z||\nabla\rho(z)|\,dz.
\end{equation}

The following result, stated in \cite[Theorem~2.6]{Am_Cri_Man_2005} under the assumption that $\BB\in BD_{\rm loc}(\Omega)$, is still
true under the weaker assumption \eqref{eq:newform}, using the special structure of $\BB$ and our choice of the commutators.

\begin{theorem} [Concentration of commutators]\label{tconcentra}
Let $\BB$ as in \eqref{eq:newform} and let $w\in L^\infty_{\rm loc}(\Omega)$ such that $w\BB\in\mathcal{M}_{\infty}(\Omega)$.
Let $\rho_\eps$ be a family of mollifiers in space induced by a radial, even convolution
kernel $\rho$. Then:
\begin{itemize}
\item[(i)] The distributions $T_\eps$ defined by \eqref{teps}
are induced by measures with locally uniformly bounded variation in
$\Omega$ as $\eps\downarrow 0$.
\item[(ii)] Any limit point, in the distribution sense, of $|T_\eps|$ as $\eps\downarrow 0$
is a measure $\sigma$ with locally finite variation in $\Omega$ satisfying

$$
\sigma(A)\leq\Vert w\Vert_{L^\infty(A)}\bigl[ I(\rho) \Leb{1} \otimes |E^s \bb_t|\,(A)+|D^s\cdot \BB|(A)\bigr]
\quad\text{for any open set $A\Subset\Omega$.}
$$
\end{itemize}
\end{theorem}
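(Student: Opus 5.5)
The plan is to follow the scheme of \cite[Theorem~2.6]{Am_Cri_Man_2005}, exploiting the fact that the time component of $\BB$ is constant and that the mollification acts only in space. The starting point is an explicit formula for the commutator. Since $\BB=(1,\bb)$ and $\rho_\eps$ acts only in $x$, the time derivatives commute with the convolution:
$$
\partial_t (w\ast\rho_\eps)=(\partial_t w)\ast\rho_\eps .
$$
Hence
$$
T_\eps=\bigl(D_x\cdot(\bb w)\bigr)\ast\rho_\eps-D_x\cdot\bigl(\bb\,(w\ast\rho_\eps)\bigr),
$$
and the time component drops out. Writing $D\cdot(\BB w)=\partial_t w+D_x\cdot(\bb w)$ gives $D_x\cdot(\bb w)=D\cdot(w\BB)-\partial_t w$. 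Testing against $\varphi$ and using that $\rho$ is even, I will derive for a.e.\ $t$ the standard representation
$$
T_\eps(t,x)=\int w(t,x-\eps z)\,\frac{\bb(t,x-\eps z)-\bb(t,x)}{\eps}\cdot\nabla\rho(z)\,dz
\;-\;w\ast\rho_\eps\,D_x\cdot\bb_t .
$$
Here $D_x\cdot\bb_t$ is a measure for a.e.\ $t$ because $\bb_t\in BD_{\rm loc}$. The spatial divergence of $\BB$ coincides with $D\cdot\BB=\Leb{1}\otimes D_x\cdot\bb_t$, so $D^s\cdot\BB=\Leb{1}\otimes D^s_x\cdot\bb_t$. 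Making this rigorous only requires the identity at the level of smooth approximations of $\bb_t$ in $x$ at fixed $t$, followed by integration in $t$ against test functions.

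For (i), the difference-quotient term is bounded, for fixed $t$, by $\|w\|_\infty\int|z||\nabla\rho(z)|\,|E\bb_t|$-type quantities. I will use the standard $BD$ estimate
$$
\int_K\Bigl|\frac{\bb(x+\eps z)-\bb(x)}{\eps}\cdot\ldots\Bigr|\,dx\le |z|\,|E\bb_t|(K_\eps).
$$
This estimate is in fact proved through the directional slices of Proposition~\ref{structural}, because only symmetric combinations appear after using that $\rho$ is radial, exactly as in \cite{Am_Cri_Man_2005}. Integrating in $t$ and using $\bb\in L^1_{\rm loc}(I;BD_{\rm loc}(A))$ gives locally uniform bounds in total variation. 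The second term is bounded by $\|w\|_\infty|D\cdot\BB|$.

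For (ii), I split $\bb_t$ via its spatial derivative into its absolutely continuous and singular parts, $E\bb_t=E^a\bb_t+E^s\bb_t$, and handle them separately. On the absolutely continuous part, the difference quotients converge in $L^1_{\rm loc}$ (fixed $t$) to $\nabla\bb_t(x) z$ paired with $\nabla\rho$ (approximate differentiability of $BD$ fields). The limit of $T_\eps$ restricted there is
$$
w\,\bigl(-\textstyle\int \langle\nabla\bb\, z,\nabla\rho\rangle\,dz\bigr)-w\,D^a\cdot\bb=w\,{\rm div}\,\bb-w\,{\rm div}\,\bb=0,
$$
where I use $\int z_i\partial_j\rho=-\delta_{ij}$. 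Thus the absolutely continuous contributions cancel in the limit. The singular contributions give at most $\|w\|_\infty I(\rho)|E^s\bb_t|$ from the first term and $\|w\|_\infty|D^s_x\cdot\bb_t|$ from the second. Passing from fixed $t$ to space-time, I integrate in $t$ and use dominated convergence in $t$, with the $t$-wise bounds from (i) as $L^1$ dominants. Then I apply lower semicontinuity of total variation on open sets to obtain the estimate for $\sigma(A)$.

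The main obstacle is the localization in (ii): cleanly separating the absolutely continuous and singular parts inside a single commutator. I will handle it as in \cite{Am_Cri_Man_2005}, approximating the singular part by restricting to a compact set $K_t$ of small $\Leb{d}$-measure carrying most of $|E^s\bb_t|$, and using the linearity of $T_\eps$ in $\bb$ together with a uniform-in-$t$ measurable selection of such sets.
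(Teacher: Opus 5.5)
Your reduction is the right one, and it is all the paper itself says: since $\rho_\eps$ acts only in $x$, the $\partial_t$ terms cancel and $T_\eps$ is, at each time, the spatial commutator of $\bb_t$. Your representation formula is correct. Part (i) follows as you say from $\nabla\rho(z)\parallel z$ (so only $\langle \bb_t(x-\eps z)-\bb_t(x),z/|z|\rangle$ enters) together with slicing.

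The gap is in (ii), exactly at what you call the main obstacle. Approximate differentiability gives convergence of the full difference quotients only pointwise in $x$. Pointwise convergence of densities says nothing about the absolutely continuous part of a weak limit of $|T_\eps|$: densities can tend to $0$ a.e.\ while converging weakly to $\Leb{d}$. In $L^1_{\rm loc}(dx)$ the full quotients do not converge in general when a singular part is present. Your proposed remedy does not work as stated either. Restricting $E\bb_t$ to a compact set $K_t$ does not split $\bb_t$ into two fields, because $E\bb_t\res K_t$ is in general not the symmetrized gradient of any field. So linearity of $T_\eps$ in $\bb$ cannot be invoked there.

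What you need is that the difference quotient is linear in the derivative \emph{measure}, not only in $\bb$. By the slicing in Proposition~\ref{structural}, with $\xi=z/|z|$ and $\theta_v(x):=x+v$,
$$
\frac1\eps\bigl\langle\bb_t(\cdot-\eps z)-\bb_t,\xi\bigr\rangle\,\Leb{d}=-|z|\int_0^1(\theta_{s\eps z})_\#\langle E\bb_t\xi,\xi\rangle\,ds ,
$$
and in this identity you may legitimately insert $E\bb_t=E^a\bb_t+E^s\bb_t$. Proposition~\ref{structural} supplies exactly the separate slicing formulas for the two parts.

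\begin{itemize}
\item The absolutely continuous piece has density $-|z|\int_0^1\langle\mathcal E_t(\cdot-s\eps z)\xi,\xi\rangle\,ds$, where $\mathcal E_t$ is the density of $E^a\bb_t$. It converges in $L^1_{\rm loc}$ by continuity of translations. Using $\int z_i\partial_j\rho=-\delta_{ij}$, its contribution tends to $w\,{\rm tr}\,\mathcal E_t$. This cancels, strongly in $L^1_{\rm loc}$, with $-(w\ast\rho_\eps)\,{\rm tr}\,\mathcal E_t$; in space-time this follows by dominated convergence in $t$.
\item The singular piece and $-(w\ast\rho_\eps)D^s_x\cdot\bb_t$ together have total variation at most $\|w\|_\infty\int|z||\nabla\rho(z)|\int_0^1(\theta_{s\eps z})_\#(\Leb{1}\otimes|E^s\bb_t|)\,ds\,dz+\|w\|_\infty|D^s\cdot\BB|$. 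Its weak limits are bounded by the right-hand side of (ii).
\end{itemize}

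No sets $K_t$ and no measurable selections are needed.

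Alternatively, your linearity idea can be made to work by subtracting a genuine field. At a Lebesgue point $x_0$ of $\mathcal E_t$ with $|E^s\bb_t|(B_r(x_0))=o(r^d)$, subtract the affine field $\mathcal E_t(x_0)(x-x_0)$. This gives $\sigma_t(B_r(x_0))=o(r^d)$, hence $\sigma_t\perp\Leb{d}$, and the crude bound from (i) then yields the sharp one. In that route, passing to space-time needs a reverse Fatou argument on compact sets rather than dominated convergence, since for each $t$ you only have $\limsup$ bounds.
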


Using Theorem~\ref{tconcentra}, multiplying both sides by $h'(w\ast\rho_\epsilon)$ and passing to the limit as $\epsilon\to 0$
one obtains that $h(w)\BB\in {\mathcal M}_\infty(\Omega)$ for any $h\in C^1(\R)$, more precisely
for any open set $\Omega'\Subset\Omega$, with $M=\Vert w\Vert_{L^\infty(\Omega')}$, one has
\begin{eqnarray}\label{eq:verygoodac}
|D\cdot (h(w)\BB)|&\leq&\hskip -10pt\sup_{z\in [-M,M]}|h'(z)| \left(MI(\rho)\int |E^s \bb_t| \, dt+M|D^s\cdot \BB|+|D\cdot (w\BB)|\right)
\\&& + \sup_{z\in [-M,M]}|zh'(z)-h(z)||D\cdot\BB|\qquad\text{in $\Omega'$.}\nonumber
\end{eqnarray}
In addition, as the ``defect'' measure $\sigma$ in (ii) is singular with respect to $\Leb{d+1}$, one obtains
\begin{equation}\label{goodac}
D^a\cdot (h(w)\BB)=(h(w)-wh'(w))D^a\cdot \BB+h'(w)D^a\cdot (w\BB).
\end{equation}

\subsection{Weak solutions to \eqref{eq:CE} and the Cauchy problem} 

Let $\rho\in L^1_{\rm loc}((0,T)\times\R^d) \cap L^\infty_{\rm loc}((0,T)\times\R^d)$ and $\rho_t=\rho(t,\cdot)$. 
Without loss of generality, from now on we shall always consider weakly continuous solutions $(0,T)\ni t\mapsto\rho_t$ of 
\eqref{eq:CE} (in duality with $C^1_c(\R^d)$), 
this is granted by the following proposition (see for instance \cite{AmCri_2014}).

\begin{proposition} \label{prop:trivial} Let $\bb$ be satisfying \eqref{eq:growth}. Then the following conditions are
equivalent:
\begin{itemize}
\item[(a)] $\rho_t$ solves \eqref{eq:CE} in the sense of distributions in $(0,T)\times\R^d$;
\item[(b)] there exists a weakly continuous representative $\tilde{\rho}_t$ of $\rho_t$, $t\in [0,T]$, satisfying
\begin{equation}\label{eq:weak1}
\int_{\R^d}\phi\tilde\rho_t\,dx-\int_{\R^d}\phi\tilde\rho_s\,dx=\int_s^t\int_{\R^d}\bb_\tau\cdot\nabla\phi\tilde\rho_\tau\,dx\, d\tau
\quad\text{for all $\phi\in C^1_c(\R^d)$, $0\leq s\leq t\leq T$.}
\end{equation}
\end{itemize}
\end{proposition}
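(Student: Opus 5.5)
The implication (b)$\Rightarrow$(a) is immediate. Take a test function $\varphi(t,x)=\eta(t)\phi(x)$ with $\eta\in C^1_c(0,T)$ and $\phi\in C^1_c(\R^d)$. Differentiating \eqref{eq:weak1} in the sense of distributions in $t$ gives $\frac{d}{dt}\int\phi\tilde\rho_t\,dx=\int\bb_t\cdot\nabla\phi\,\tilde\rho_t\,dx$. Multiplying by $\eta$ and integrating by parts in $t$ yields the distributional formulation for product test functions. Since finite sums of products are dense in $C^1_c((0,T)\times\R^d)$ in the $C^1$ topology, this extends to general test functions. The product $\bb\rho$ is locally integrable by \eqref{eq:growth} together with the local boundedness of $\rho$.

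For (a)$\Rightarrow$(b), I would argue as follows.

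\textbf{Step 1 (representative for each $\phi$).} Fix $\phi\in C^1_c(\R^d)$ and test \eqref{eq:CE} with $\eta(t)\phi(x)$. This shows that the function $g_\phi(t):=\int\phi\rho_t\,dx$ has distributional derivative
$$h_\phi(t):=\int\bb_t\cdot\nabla\phi\,\rho_t\,dx .$$
This derivative lies in $L^1_{\rm loc}(0,T)$, since $\nabla\phi$ has compact support and $\rho$ is locally bounded. Indeed $|h_\phi|\le \|\nabla\phi\|_\infty\|\rho\|_{L^\infty(K)}\int_K|\bb_t|\,dx$, and $\int_K|\bb_t|\,dx\in L^1(0,T)$ by \eqref{eq:growth}. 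Hence $g_\phi$ coincides a.e. with an absolutely continuous function. Equivalently, there is a full-measure set $L_\phi\subseteq(0,T)$ on which \eqref{eq:weak1} holds for $s,t\in L_\phi$.

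\textbf{Step 2 (a single good set of times).} Choose a countable set $\mathcal D\subset C^1_c(\R^d)$ that is dense in $C^1_c$ in the following sense: every $\phi$ is the $C^1$ limit of elements of $\mathcal D$ supported in a fixed compact set. Let $L=\bigcap_{\phi\in\mathcal D}L_\phi$, which still has full measure. By Fubini we may also assume that $\rho_t$ is locally bounded uniformly for $t\in L$, at least locally in time. For $t\in L$ and $\phi\in\mathcal D$ the map $t\mapsto\int\phi\rho_t\,dx$ is uniformly continuous on $L\cap[\delta,T-\delta]$ for every $\delta>0$, with modulus controlled by $\int_s^t|h_\phi|$.

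\textbf{Step 3 (extension to all times).} For any $t\in[0,T]$ define $\tilde\rho_t$ as the weak$^*$ limit of $\rho_{t_n}$ along $t_n\in L$ with $t_n\to t$. Existence of the limit uses the local $L^\infty$ bound and compactness, and uniqueness of the limit follows from convergence on $\mathcal D$ together with density. The identity \eqref{eq:weak1} then passes to the limit, first for $\phi\in\mathcal D$ and then by density for all $\phi\in C^1_c$. Weak continuity of $t\mapsto\tilde\rho_t$ follows from the uniform continuity established in Step 2.

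\textbf{Main obstacle.} The delicate point is the endpoints $t=0$ and $t=T$, and more generally the need for uniform bounds. Local boundedness of $\rho$ in $(0,T)\times\R^d$ does not give bounds up to $t=0$. There I would instead define $\tilde\rho_0$ as a limit in duality with $C^1_c$: the limit of $\int\phi\rho_t\,dx$ as $t\downarrow0$ exists because the integral of $h_\phi$ converges, provided $h_\phi\in L^1(0,T)$. This requires boundedness of $\rho$ on $(0,T)\times K$, which I would take as part of the standing assumption (local boundedness in $[0,T]\times\R^d$) as in \cite{AmCri_2014}. Under that assumption the endpoint limits exist in $L^\infty$ weak$^*$.
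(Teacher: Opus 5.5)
Your argument is correct and is the standard proof behind the paper's reference to \cite{AmCri_2014}: a countable $C^1$-dense family of test functions, a common full-measure set of good times, and extension to all times by density using uniform local $L^\infty$ bounds. Your caveat at the endpoints is well founded, because with only $\rho\in L^\infty_{\rm loc}((0,T)\times\R^d)$ the statement can fail at $t=0$: for $d=1$, $\bb_t(x)=x/t$ if $|x|<t$ and $\bb_t(x)={\rm sign}(x)$ otherwise, the function $\rho_t=\frac{1}{2t}\uno_{[-t,t]}$ solves \eqref{eq:CE} but $\rho_t\Leb{1}\to\delta_0$ as $t\downarrow 0$, so the boundedness on $(0,T)\times K$ that you add is genuinely needed (it holds in the class $\mathcal R$, where the proposition is actually used).
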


\begin{definition}[Cauchy problem for \eqref{eq:CE}]\label{def:Cauchy}
Given $\bar\rho\in L^1\cap L^\infty(\R^d)$, we say that $\rho_t$ solves the initial value problem 
\begin{equation}\label{eq:CE_initial}
\begin{cases}
\displaystyle{\frac{d}{dt}\rho_t+\dive{\bb_t\rho_t}}=0&\text{in $(0,T)\times\R^d$;}\cr\cr
\rho_0=\bar\rho
\end{cases}
\end{equation}
if \eqref{eq:weak1} holds and $\lim\limits_{t\to 0}\int_{\R^d}\phi\rho_t\,dx=\int_{\R^d}\phi\bar\rho\,dx$ for all $\phi\in C^1_c(\R^d)$.
\end{definition}

\begin{remark}[Extension to negative times and space-time vector fields $\rho\BB=(\rho,\rho\bb)$]\label{rem_negative_times}
{\rm A standard procedure, that we will also adopt in the final sections of the paper, consists in the extension of $\bb$ to negative
times, setting $\bb_t=0$ and $\rho_t=0$ for all $t\in (-\infty,0)$. In this case, Definition~\ref{def:Cauchy} can be formulated by saying that 
for all $\phi\in C^1_c(\R^d)$ the map $t\mapsto\int_{\R^d}\phi\rho_t\,dx$
is a $SBV_{\rm loc}$ function in $(-\infty,T)$, with distributional derivative
$$
\biggl(\int_{\R^d}\phi\bar\rho\,dx\biggr)\,\delta_0+\biggl(\int_{\R^d}\bb_t\cdot\nabla\phi\rho_t\,dx\biggr)\Leb{1}.
$$

It will also be convenient to formulate \eqref{eq:CE} when a nonzero right hand side is involved, possibly a measure $\theta$ in
$(0,T)\times\R^d$ with finite total variation on bounded sets. 
In this case, extending both $\bb$ and $\rho$ to $0$ for $t\in (0,T)^c\times\R^d$, an equivalent formulation
involves the space-time vector field
\begin{equation}\label{eq:spacetime}
\BB(t,x)=\bigl(1,\bb_t(x)\bigr)
\end{equation}
as follows: 
\begin{equation}
\begin{cases}
\displaystyle{\frac{d}{dt}\rho_t+\dive{\bb_t\rho_t}}=\theta&\text{in $(0,T)\times\R^d$;}\cr\cr
\rho_0=\bar\rho
\end{cases}
\end{equation}
holds if and only if
\begin{equation}\label{eq:rep_div_BB}
D\cdot (\rho\BB)=\bar\rho\Haus{d}\res\{t=0\}+\theta-\rho_T\Haus{d}\res\{t=T\}
\qquad\text{in $\R\times\R^d$,}
\end{equation}
where from now on $D\cdot {\bf C}$ stands for the space-time divergence of a $(d+1)$-dimensional vector field ${\bf C}$.
}\end{remark}

Let us define
\begin{equation}\label{classR}
{\mathcal R}:=\biggl\{\rho_t\in L^\infty\bigl((0,T);L^1(\R^d)\bigr)\cap L^\infty\bigl((0,T); L^\infty(\R^d)\bigr):
\,\,\text{$\rho_t$ is weakly continuous}\biggr\},
\end{equation}
where weak continuity is meant, in agreement with Proposition~\ref{prop:trivial}(b), in duality with $C^1_c(\R^d)$.
Notice however that the uniform $L^\infty$ bound in this class allows to upgrade immediately this weak continuity to
continuity in duality with $L^1(\R^d)$. We shall also denote by ${\mathcal R}_+$ the subset of ${\mathcal R}$ consisting of nonnegative functions. 

\smallskip

Under assumption \eqref{eq:growth}, the weak continuity  requirement in \eqref{classR} is essentially not restrictive,
thanks to Proposition~\ref{prop:trivial}. In addition, for the class of solutions we are mostly interested in, namely $\rho\in {\mathcal R}_+$, the total mass starting from 
an initial $\bar\rho\in L^1(\R^d)$ is conserved, under assumption \eqref{eq:growth}:
$$
\int_{\R^d}\rho_t\,dx=\int_{\R^d}\bar\rho\,dx \qquad\text{for all $t\in [0,T]$.}
$$
Indeed, it is sufficient to build 
functions $\phi_R\in C^\infty_c(\R^d)$ with $0\leq\phi_R\leq 1$, $\phi_R\mid_{B_R(0)}\equiv1$ and
$(1+|x|)|\nabla\phi_R(x)|$ uniformly bounded with respect to $x$ and $R$, noticing that these conditions grant
\begin{eqnarray*}
\biggr|\int_{\R^d}\phi_R(\rho_s-\bar\rho)\,dx\biggr|&\leq&
\int_0^s\biggl|\int_{\R^d}\bb_t\cdot\nabla\phi_R\rho_t\,dx\biggr|\,dt
\leq C\|\rho\|_\infty\int_0^T \int_{\{\phi_R<1\}} \frac{|\bb^1_t|}{1+|x|}\,dx\,dt
\\&+& C\int_0^T \|\frac{|\bb^2_t|}{1+|x|}\|_\infty\int_{\{\phi_R<1\}} \rho_t\,dx\,dt
\quad\forall s\in [0,T]
\end{eqnarray*}
which, since the right hand side tends to $0$ as $R\to\infty$ by dominated convergence, implies the conservation of mass. Hence, the main requirement in the class $\mathcal R_+$ is
the uniform bound on the $L^\infty$ norm. As we will see in the next section, this condition is tightly related to
the axiomatization of Regular Lagrangian Flows and it plays an important role, at the technical level.

\smallskip

\subsection{Other measure-theoretic tools}

A measure-valued map $a\mapsto\mu_a\in\Measuresp{X}$ in a measurable space $(\mathcal{A},\mathcal F)$ is said
to be weakly$^*$-measurable if $a\mapsto\mu_a(B)$ is $\mathcal F$-measurable for any Borel set $B\subseteq X$. It is the
typical regularity theory when dealing with conditional probabilities, or disintegration theorems, as illustrated below. 
We shall use the following result:

\begin{theorem}[Disintegration]\label{thm:disinte}
Let $X$, $\mathcal A$ be Polish topological spaces, let $\mathcal F$ be a $\sigma$-algebra containing $\Borel{\mathcal A}$ 
and let $f:X\to\mathcal A$ be a ($\Borel{X}-\mathcal F$)-measurable map. \\
Then, for every nonnegative and $\sigma$-finite measure $\mu$ in $X$  such that $f_\#\mu$ is $\sigma$-finite
there exist a weakly$^*$ measurable family of measures $\mu_a\in \Measuresp{X}$, $a\in\mathcal A$ , concentrated on $f^{-1}(a)$ 
and unique up to sets of negligible $f_\#\mu$-measure, such that
$\mu=\int_{{\mathcal A}}\mu_a\,df_\#\mu$,
namely 
$$
\int_X\phi(x)\,d\mu(x)=\int_{{\mathcal A}}\int_X\phi(x)\,d\mu_a(x)\,d f_\#\mu(a)
$$
for any Borel function $\phi:X\to [0,\infty]$.
\end{theorem}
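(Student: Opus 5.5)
The statement to prove is the Disintegration Theorem~\ref{thm:disinte}. I would first reduce to finite measures and then build the disintegration from conditional expectations.

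\textbf{Reduction to finite measures.} Since $f_\#\mu$ is $\sigma$-finite, there is a Borel partition $(A_n)$ of $\mathcal A$ with $f_\#\mu(A_n)<\infty$. The measures $\mu\res f^{-1}(A_n)$ are then finite. I would disintegrate each of them separately and glue the resulting families on the disjoint sets $A_n$. Concentration on fibres and uniqueness are both local in $a$, so the gluing is harmless. After normalizing, I may assume $\mu$ is a probability measure, and $\nu:=f_\#\mu$ is then a probability measure on $\mathcal F$.

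\textbf{Construction.} Fix a countable algebra $\mathcal C$ generating $\Borel{X}$, obtained from a countable base of the Polish space $X$. For each $C\in\mathcal C$, the measure $B\mapsto\mu(C\cap f^{-1}(B))$ is absolutely continuous with respect to $\nu$. The Radon--Nikodym theorem on $(\mathcal A,\mathcal F)$ gives densities $g_C$, with $0\leq g_C\leq1$. Outside a $\nu$-null set, $C\mapsto g_C(a)$ is finitely additive on $\mathcal C$, because only countably many identities are involved.

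To upgrade to $\sigma$-additivity I would use inner regularity. Since $\mu$ is tight on the Polish space $X$, I can choose compacts $K_m$ with $\mu(X\setminus K_m)<1/m$. I would enlarge $\mathcal C$ so that it contains suitable compact approximants, still countably many. Then for $\nu$-a.e.\ $a$ the set function $g_\cdot(a)$ is inner regular by compacts on $\mathcal C$, and hence $\sigma$-additive. It therefore extends, by Carath\'eodory, to a Borel measure $\mu_a$.

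\textbf{Verification.} The $\mathcal F$-measurability of $a\mapsto\mu_a(B)$ and the identity $\mu=\int\mu_a\,d\nu$ would follow by a monotone class argument starting from $\mathcal C$, and then by monotone convergence for nonnegative Borel $\phi$.

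For concentration on $f^{-1}(a)$, take a countable family $(U_j)$ of Borel sets generating $\Borel{\mathcal A}$ and separating points. Then
$\mu_a(f^{-1}(U_j))=\uno_{U_j}(a)$ for $\nu$-a.e.\ $a$. Intersecting over $j$, and using $\Borel{\mathcal A}\subseteq\mathcal F$, gives $\mu_a(X\setminus f^{-1}(a))=0$. Here $f^{-1}(a)=\bigcap_{j:a\in U_j}f^{-1}(U_j)\cap\bigcap_{j:a\notin U_j}f^{-1}(U_j^c)$ because the $U_j$ separate points.

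For uniqueness, suppose two families both disintegrate $\mu$ and are concentrated on fibres. Then for every $C\in\mathcal C$ and $B\in\mathcal F$,
\[
\int_B\mu_a(C)\,d\nu=\mu(C\cap f^{-1}(B))=\int_B\mu'_a(C)\,d\nu .
\]
Hence $\mu_a(C)=\mu'_a(C)$ $\nu$-a.e., and countability of $\mathcal C$ together with the $\pi$-$\lambda$ theorem gives $\mu_a=\mu'_a$ for $\nu$-a.e.\ $a$.

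\textbf{Main obstacle.} I expect the main difficulty to be the $\sigma$-additivity step, i.e.\ passing from countably many a.e.\ identities to genuine measures. The Polish structure of $X$ is used exactly there, through tightness and compact classes. A simpler alternative is to embed $X$ Borel-isomorphically into $[0,1]$ and build $\mu_a$ through a monotone rational distribution function $q\mapsto g_{(-\infty,q]}(a)$, made right-continuous. The fibre concentration then still needs the separating countable family described above.
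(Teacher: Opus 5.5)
Your proposal is correct, but it follows a different route from the paper. The paper's proof is only a reduction plus a citation: it notes that $\Borel{\mathcal A}\subseteq\mathcal F$ makes $f$ Borel, splits $X$ into countably many Borel pieces of finite $\mu$-measure, and invokes the standard disintegration theorem for Borel maps between Polish spaces. You instead reprove that standard theorem from scratch: Radon--Nikodym densities $g_C$ on a countable generating algebra, a.e.\ finite additivity, $\sigma$-additivity through inner regularity by compact sets (the compact class lemma), Carath\'eodory extension, and a monotone class argument. The paper's route buys brevity. Yours is self-contained and works directly on $(\mathcal A,\mathcal F)$, so the family comes out $\mathcal F$-measurable and uniqueness is proved directly among $\mathcal F$-measurable families. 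It also makes visible that the hypothesis $\Borel{\mathcal A}\subseteq\mathcal F$ enters only through the fibres $f^{-1}(a)$ and the separating family $(U_j)$. Moreover, partitioning $\mathcal A$ into pieces of finite $f_\#\mu$-measure makes the gluing immediate, whereas splitting $X$ as the paper does requires recombining the pieces through the densities of $f_\#(\mu\res X_n)$ with respect to $f_\#\mu$.

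Two details to tidy. First, since $f_\#\mu$ is $\sigma$-finite on $\mathcal F$, the $A_n$ should be taken in $\mathcal F$ rather than Borel; nothing else changes. Second, the enlargement of $\mathcal C$ by compact approximants must be iterated countably many times, so that every element $C$ of the final (still countable) algebra has an increasing sequence of compact approximants $K_{C,m}\subseteq C$ inside it. After that, $g_{K_{C,m}}\uparrow g_C$ $\nu$-a.e.\ follows from monotone convergence and $\int g_{C\setminus K_{C,m}}\,d\nu=\mu(C\setminus K_{C,m})\to 0$. Finally, on the exceptional $\nu$-null set (chosen in $\mathcal F$) you may simply put $\mu_a=0$. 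This is admissible since only $\mu_a\in\Measuresp{X}$ is required, and it makes concentration on $f^{-1}(a)$ hold for every $a$.
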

\begin{proof} By our assumption on $\mathcal F$, the map $f$ is also Borel. Then the result follows by standard
disintegration theorem in Polish (or even Lusin) topological spaces, see for instance \cite[Theorems~10.4.10 and Example~10.4.11]{Bogachev},
after a decomposition of $X$ in countably many Borel sets with finite $\mu$-measure.
\end{proof}

We recall the following basic superposition principle (\cite[Theorem~9]{AC_UMI}), see also Section~\ref{sec:5.2}, where a more general principle
of the same type will play a role.

\begin{theorem}[Superposition principle]\label{thm:superpo}
Under assumption \eqref{eq:growth}, for any weakly continuous solution $\rho_t\in {\mathcal R}_+$ to \eqref{eq:CE} there
exists $\ssigma\in\Measuresp{C([0,T];\R^d)}$ concentrated on absolutely continuous solutions to the ODE \eqref{eq:ODE} and satisfying
\begin{equation}\label{eq:Lag1}
\rho_t\Leb{d}=(\ev_t)_\#\ssigma\qquad \forall t\in [0,T].
\end{equation}
\end{theorem}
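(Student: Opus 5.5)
The plan is to follow the standard route for the superposition principle. First reduce to probability measures, then regularize, build Lagrangian measures for the smooth problems, and pass to the limit by tightness.

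\emph{Normalization and reduction.} By conservation of mass (shown above under \eqref{eq:growth}), if $\bar\rho\neq0$ we may normalize $\int\rho_t\,dx=1$. Then $\mu_t:=\rho_t\Leb{d}$ is a narrowly continuous curve in $\Probabilities{\R^d}$ solving \eqref{eq:CE} with velocity $\bb_t$. The integrability we need is
$$
\int_0^T\int\frac{|\bb_t|}{1+|x|}\,d\mu_t\,dt<\infty .
$$
This follows from $\rho\in\mathcal R_+$: the $\bb^1$ part is controlled by $\|\rho\|_\infty\|\bb^1/(1+|x|)\|_{L^1}$, and the $\bb^2$ part by $\int_0^T\|\bb^2_t/(1+|x|)\|_\infty\,dt$, since the mass is $1$.

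\emph{Regularization.} Mollify in space with a positive kernel having full support, e.g. a Gaussian. Set $\mu^\eps_t=\mu_t\ast\rho_\eps$ and $\bb^\eps_t=(\bb_t\mu_t)\ast\rho_\eps/\mu^\eps_t$. Then $\mu^\eps$ solves \eqref{eq:CE} with $\bb^\eps$. By Jensen,
$$
\int\frac{|\bb^\eps_t|}{1+|x|}\,d\mu^\eps_t\le C\int\frac{|\bb_t|}{1+|x|}\,d\mu_t,
$$
because $(1+|x|)/(1+|y|)$ is bounded on the support of the kernel when it is compactly supported. With a Gaussian one must argue a little more, or use a compactly supported kernel plus a small Gaussian perturbation. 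The fields $\bb^\eps_t$ are locally Lipschitz in $x$ with linear growth, and integrable in $t$, so their flows $X^\eps$ are well defined. Uniqueness for the smooth continuity equation then gives $\mu^\eps_t=(X^\eps_t)_\#\mu^\eps_0$. Define $\ssigma^\eps:=(X^\eps_\cdot)_\#\mu^\eps_0$.

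\emph{Tightness and limit.} This is the main step. I would use the coercive functional
$$
\Psi(\gamma)=\phi(|\gamma(0)|)+\int_0^T\theta\Bigl(\frac{|\dot\gamma|}{1+|\gamma|}\Bigr)\,dt .
$$
Here $\phi$ and $\theta$ are superlinear convex functions, chosen via de la Vallée Poussin so that $\int\phi(|x|)\,d\bar\mu<\infty$ and $\int_0^T\int\theta(|\bb_t|/(1+|x|))\,d\mu_t\,dt<\infty$. The sublevels of $\Psi$ are compact in $C([0,T];\R^d)$: the $\log(1+|\gamma|)$ bound together with the superlinearity of $\theta$ gives equi-integrability of $\dot\gamma$. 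Jensen again bounds $\int\Psi\,d\ssigma^\eps$ uniformly in $\eps$. By Prokhorov, a subsequence converges narrowly to some $\ssigma$, and $(\ev_t)_\#\ssigma=\mu_t$ because $\mu^\eps_t\to\mu_t$.

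\emph{Identifying the limit as concentrated on ODE solutions.} I expect this to be the real obstacle, since $\bb$ is only Borel. Following Ambrosio--Crippa, it suffices to show
$$
\int\Bigl|\gamma(t)-\gamma(0)-\int_0^t\bb_s(\gamma(s))\,ds\Bigr|\wedge1\,d\ssigma=0 .
$$
Approximate $\bb$ by a continuous $\mathbf c$ with small error in $L^1(\mu_t\,dt)$ (weighted by $(1+|x|)^{-1}$). For continuous $\mathbf c$ the functional passes to the limit, using a truncation since $\mathbf c$ may be unbounded. The errors are controlled uniformly:
$$
\int\int_0^T|\bb^\eps_s-\mathbf c_s|(\gamma)\,ds\,d\ssigma^\eps\le\int_0^T\!\!\int|\bb_s-\mathbf c_s|\,d\mu_s\,ds+o(1),
$$
again by Jensen, since $(\mathbf c\mu)\ast\rho_\eps/\mu^\eps\to\mathbf c$ suitably. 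The same estimate holds for the limit, since the marginals of $\ssigma$ are $\mu_t$. Letting $\mathbf c\to\bb$ gives the claim. Finally, multiplying $\ssigma$ by the total mass $\int\bar\rho\,dx$ recovers \eqref{eq:Lag1}.
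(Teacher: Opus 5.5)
The paper gives no proof of this statement; it quotes it from \cite[Theorem~9]{AC_UMI}. Your mollification--smooth flow--tightness scheme is the standard route to it.

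Several steps are fine:
\begin{itemize}
\item the normalization;
\item the coercive functional $\Psi$, whose sublevels are indeed compact by the $\log(1+|\gamma|)$ bound and lower semicontinuity;
\item the Prokhorov step;
\item the Gaussian kernel, which is harmless: $(1+|y|)/(1+|x|)\le 1+|x-y|$ and the moments of $\rho_\eps$ are bounded. For the $\theta$-bound, take $\theta$ with $\theta(2s)\le C\theta(s)$.
\end{itemize}

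One claim is unjustified: $\bb^\eps_t$ need not have linear growth. The field $\bb^1_t$ is only integrable, and $\bb^\eps_t(x)$ is an average of $\bb_t$ against $\rho_\eps(x-\cdot)\mu_t/\mu^\eps_t(x)$, which for large $|x|$ concentrates near the support of $\mu_t$. Linear growth is not needed, though. Global existence of $X^\eps(\cdot,x)$ for $\mu^\eps_0$-a.e.\ $x$, and $\mu^\eps_t=(X^\eps_t)_\#\mu^\eps_0$, follow from the local Lipschitz bounds and $\int_0^T\!\int\frac{|\bb^\eps_t|}{1+|x|}\,d\mu^\eps_t\,dt<\infty$. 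You run the usual argument with $\log(1+|x|)$ in place of $|x|$ and cut-offs $\phi_R$ satisfying $(1+|x|)|\nabla\phi_R|\le C$.

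The genuine gap is in the identification step, exactly where \eqref{eq:growth} is weaker than the classical assumption. For $\rho\in\mathcal R_+$ you only know $\int_0^T\!\int\frac{|\bb_t|}{1+|x|}\,d\mu_t\,dt<\infty$, and $\bb$ need not lie in $L^1(\mu_t\,dt)$. For example, take $d=1$, $\bb(x)=x$, $\bar\rho=(1+|x|)^{-2}$ and $\rho_t(x)=e^{-t}\bar\rho(e^{-t}x)$.

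Your error bound is the unweighted quantity $\int_0^T\!\int|\bb_s-\mathbf c_s|\,d\mu_s\,ds$. In such cases it is $+\infty$ for every bounded $\mathbf c$, so approximating $\bb$ in the weighted norm and ``letting $\mathbf c\to\bb$'' proves nothing. If instead you allow unbounded $\mathbf c$, the step $(\mathbf c\mu)\ast\rho_\eps/\mu^\eps\to\mathbf c$ in $L^1(\mu^\eps_s\,ds)$ is no longer justified. The truncation $\wedge1$ only acts on the left-hand side and does not help.

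The missing idea is to put the weight into the path functional itself. For $\mathbf c\in C_c([0,T]\times\R^d;\R^d)$ use
\[
F_{\mathbf c}(\gamma):=\frac{\bigl|\gamma(t)-\gamma(0)-\int_0^t\mathbf c_s(\gamma(s))\,ds\bigr|}{1+\max_{[0,T]}|\gamma|}.
\]
This functional is bounded and continuous on $C([0,T];\R^d)$, so it passes to the narrow limit. Since $1+\max_{[0,T]}|\gamma|\ge 1+|\gamma(s)|$, it satisfies
\[
\int F_{\mathbf c}\,d\ssigma^\eps\le\int_0^T\!\!\int\frac{|\bb^\eps_s-\mathbf c_s|}{1+|x|}\,d\mu^\eps_s\,ds\le C\int_0^T\!\!\int\frac{|\bb_s-\mathbf c_s|}{1+|x|}\,d\mu_s\,ds+o(1).
\]
The same comparison, together with the fact that the time marginals of $\ssigma$ are $\mu_s$, bounds $\int(F_{\bb}-F_{\mathbf c})\,d\ssigma$ by the weighted error. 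Density of $C_c$ in $L^1\bigl((1+|x|)^{-1}\mu_s\,ds\bigr)$ then gives $\int F_{\bb}\,d\ssigma=0$. This is the integral ODE for $\ssigma$-a.e.\ $\gamma$, since the denominator is finite for every continuous curve. Alternatively, you can localize with a continuous cut-off in $\max_{[0,T]}|\gamma|$.
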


The following duality result can be seen as a refinement of Kantorovich's duality in optimal transportation, with the additional information that the
supremum in the dual formulation is attained and that the optimal Kantorovich potentials are characteristic functions if the cost function $c$
is a characteristic function of a Borel set (the latter property is granted by 
the standard construction of the potentials via $c$-monotonicity, see for instance  \cite[Theorem~3.17]{ABS}).

\begin{theorem}[Kellerer duality, \cite{Kel_84}]\label{thm:Kellerer}
Let $X$, $Y$ be Polish spaces and $\mu_X\in\Probabilities{X}$, $\mu_Y\in\Probabilities{Y}$. Let $C\subseteq X\times Y$ be a Borel set. Then
$$
\sup\left\{\Sigma(C):\ \Sigma\in\Gamma(\mu_X,\mu_Y)\right\}=\min\biggl\{\mu_X(A)+\mu_Y(B):\ \uno_A(x)+\uno_B(y)\geq \uno_C(x,y)\biggr\},
$$
where $\Gamma(\mu_X,\mu_Y)$ consists of all $\Sigma\in\Probabilities{X\times Y}$ having $\mu_X$ and $\mu_Y$ as marginals, while
the minimization in the right hand side is among all Borel sets $A\subseteq X$, $B\subseteq Y$.
\end{theorem}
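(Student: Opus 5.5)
The easy half is weak duality. Take any $\Sigma\in\Gamma(\mu_X,\mu_Y)$ and any Borel sets $A$, $B$ with $\uno_A(x)+\uno_B(y)\geq\uno_C(x,y)$. Integrating this inequality against $\Sigma$ and using the marginal conditions gives
$$\Sigma(C)\leq\int(\uno_A(x)+\uno_B(y))\,d\Sigma=\mu_X(A)+\mu_Y(B).$$
Hence the supremum is at most the infimum of the right hand side. The work lies in proving equality and showing that the infimum is attained by a pair of \emph{sets}.

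I would first relax the dual problem to functions. Minimize $\int f\,d\mu_X+\int g\,d\mu_Y$ over Borel $f,g:X,Y\to[0,1]$ with $f(x)+g(y)\geq\uno_C(x,y)$ for all $(x,y)$. A layer-cake argument brings this back to sets. Given an admissible pair $(f,g)$ and $t\in(0,1)$, put $A_t=\{f\geq t\}$ and $B_t=\{g>1-t\}$. If $(x,y)\in C$ and $x\notin A_t$, then $f(x)<t$, so $g(y)\geq 1-f(x)>1-t$ and $y\in B_t$. Thus $(A_t,B_t)$ is admissible for every $t$. Moreover
$$\int_0^1\bigl(\mu_X(A_t)+\mu_Y(B_t)\bigr)\,dt=\int f\,d\mu_X+\int g\,d\mu_Y.$$
So some $t$ gives a set pair at least as good as $(f,g)$, and the relaxed and set-valued infima coincide, together with their attainment.

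Next I would prove equality in the relaxed problem. When $C$ is closed, the cost $c=\uno_{X\times Y\setminus C}$ is lower semicontinuous and bounded. Then $\sup\Sigma(C)=1-\min_\Sigma\int c\,d\Sigma$, and standard Kantorovich duality (for instance via $c$-cyclical monotonicity of an optimal plan, as in \cite[Theorem~3.17]{ABS}) gives equality with attained potentials. Since $c$ takes only the values $0$ and $1$, the $c$-transforms can be truncated to $[0,1]$, giving admissible $(f,g)$. For a general Borel $C$ I would use a Choquet capacitability argument. Both $C\mapsto\sup_\Sigma\Sigma(C)$ and $C\mapsto\inf(\ldots)$ are monotone and continuous along increasing sequences, the former also along decreasing sequences of compact sets. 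The sup side is inner regular on compact subsets of $C$ by tightness of $\Gamma(\mu_X,\mu_Y)$, and this transfers equality from compact sets to Borel (indeed Suslin) sets.

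The main obstacle is attainment of the infimum for Borel, non-closed $C$. Here the closed-set potentials do not pass to the limit directly, and the constraint must hold \emph{pointwise}, not merely $\mu_X\otimes\mu_Y$-a.e. I would take a minimizing sequence $(f_n,g_n)$, apply Komlós' theorem to obtain Cesàro means converging $\mu_X$- and $\mu_Y$-a.e., and define $f=\limsup$ and $g=\limsup$ everywhere. The $\limsup$ of convex combinations still satisfies $f(x)+g(y)\geq\uno_C(x,y)$ at every point, because the pointwise inequality is preserved under convex combinations and $\limsup(a_n+b_n)\leq\limsup a_n+\limsup b_n$. The integrals converge by dominated convergence. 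The layer-cake step then produces the optimal sets $A$, $B$.
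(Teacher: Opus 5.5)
Your weak duality, the layer-cake reduction from $[0,1]$-valued pairs to sets, and the closed case via Kantorovich duality are all fine. The gap is in the step that carries the theorem: the passage from compact to Borel $C$. Write $S(C)$ for the primal supremum and $I(C)$ for the dual infimum.

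Inner regularity of $S$ (which for Borel $C$ is just inner regularity of each $\Sigma$) only gives $S(C)=\sup_{K\subseteq C}S(K)=\sup_{K\subseteq C}I(K)\le I(C)$. That is weak duality again. The inequality you need, $I(C)\le S(C)$, requires inner regularity of $I$, i.e.\ Choquet's theorem applied to $I$.

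Of the hypotheses of Choquet's theorem, continuity of $I$ along decreasing sequences of compacts does follow from $I=S$ on compacts and your Prokhorov argument for $S$. Continuity of $I$ along increasing sequences $C_n\uparrow C$ of arbitrary sets, however, is the nontrivial content of Kellerer's theorem, and you state it without proof. It is not formal: near-optimal covers $(A_n,B_n)$ of the $C_n$ need not be nested or converge in any sense, so $\lim_n I(C_n)\ge I(C)$ needs an argument.

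The missing argument is the Komlós trick you reserve for attainment, now applied with a varying constraint.
Take $(f_n,g_n)$ admissible for $C_n$ with $\int f_n\,d\mu_X+\int g_n\,d\mu_Y\le I(C_n)+1/n$.
Apply Komlós once on $X\sqcup Y$ with $\mu_X+\mu_Y$, so the subsequence is common to $(f_n)$ and $(g_n)$.
Let $f,g$ be the everywhere $\limsup$ of the Cesàro means.
If $(x,y)\in C$, then $(x,y)\in C_n$ for all $n\ge m$. Since all functions are nonnegative, the $k$-th Cesàro mean of $f_{n_j}(x)+g_{n_j}(y)$ is at least the proportion of indices $j\le k$ with $n_j\ge m$, which tends to $1$.
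Subadditivity of $\limsup$ then gives $f(x)+g(y)\ge 1$, dominated convergence gives cost $\lim_n I(C_n)$, and the layer cake returns sets.

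With this in place, Choquet's theorem gives $I(C)=\sup_{K\subseteq C}I(K)=\sup_{K\subseteq C}S(K)\le S(C)$ for Suslin $C$. Attainment, which you single out as the main obstacle, is then just the special case $C_n\equiv C$; the paper itself only uses the non-sharp form \eqref{eq:Ketterer1}.

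The paper gives no proof of this statement. It cites \cite{Kel_84} and hints that indicator potentials come from the $c$-monotone construction. That construction presupposes an optimal plan, which exists for closed $C$ but not in general: for $C=\{y>x\}$ with uniform marginals on $[0,1]$ one has $\sup\Sigma(C)=1$, yet $\Sigma(C)<1$ for every coupling. So your structure (closed case, then capacitability, then layer cake) is the right one, once the increasing continuity of $I$ is actually proved.
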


We are not going to use the sharp version, with the $\min$ in the right hand side. Passing to the complementary sets, the non sharp version becomes 
\begin{equation}\label{eq:Ketterer1}
\sup\left\{\mu_X(\tilde{A})+\mu_Y(\tilde{B}):\ (\tilde{A}\times\tilde{B})\cap C=\emptyset\right\}=
2-\sup\left\{\Sigma(C):\ \Sigma\in\Gamma(\mu_X,\mu_Y)\right\}.
\end{equation}

\section{Regular Lagrangian Flows}

\begin{definition} [Regular Lagrangian Flow]
Let $\XX(\cdot,x):\R^d\to C([0,T];\R^d)$ be a Borel map. We say that $\XX$ is a Regular Lagrangian Flow relative to $\bb$ if:
\begin{itemize}
\item[(i)] $\XX(t,x)=x+\int_0^t\bb_s(\XX(s,x))\,ds$ for all $t\in [0,T]$, for $\Leb{d}$-a.e. $x\in\R^d$; 
\item[(ii)] there exists a constant $C>0$ such that $\XX(t,\cdot)_\#\Leb{d}\leq C\Leb{d}$ for all $t\in [0,T]$.
\end{itemize}
The least $C$ in (ii) is called compressibility constant of $\XX$ and denoted by $C(\XX)$. 
\end{definition}

\begin{remark} [Relevance of the bounded compression condition (ii)] 
{\rm Already the case of Sobolev vector fields illustrates the necessity of conditions additional to (i) in order to
recover some form of uniqueness. After the first counterexamples provided in \cite{DL_89} (for ``almost Sobolev'' vector fields, as those in the fractional Sobolev space $W^{s,1}$, $s<1$)
in \cite{Kum_2024}   a continuous, Sobolev and divergence-free vector field $\bb$ is built such that at least two distinct
solutions start from a set of full measure of initial points $x$
(improving an earlier construction in \cite{BCD_2021}, see also \cite{PiSo}). By the uniqueness theorem of RLF, only a suitable selection of these grants the bounded compression (ii), in this specific case 
even the measure-preserving property. The critical integrability exponent of $|\nabla\bb_t|$ for this counterexample is exactly $d$: remarkably,
as soon as $|\nabla\bb_t|\in L^p$ with $p>d$, it is proved in \cite{CC21} that uniqueness in the conventional sense holds for $\Leb{d}$-almost every
initial condition $x$, if one works with the continuous representative of $\bb_t$ granted by the Sobolev embedding.
}\end{remark}

As in the Optimal Transport theory, some statements become extremely more powerful when uniqueness takes place in the
larger class of probability-valued maps, and this leads to the concept of generalized Regular Lagrangian Flow.

\begin{definition} [Generalized RLF]
Let $\R^d\ni x\mapsto\eta_x\in\Probabilities{C([0,T];\R^d)}$ 
be a weakly$^*$ Borel map. We say that $\eta_x$ is a Generalized Regular Lagrangian Flow relative to $\bb$ if 
\begin{itemize}
\item[(i)] for $\Leb{d}$-a.e. $x\in\R^d$, $\eta_x$ is concentrated on absolutely continuous solutions to \eqref{eq:ODE} starting at $x$; 
\item[(ii)] denoting $\eeta=\int\eta_x\,d\Leb{d}(x)\in\Measuresp{C([0,T];\R^d)}$, there exists a constant $C>0$ such that $(\ev_t)_\#\eeta\leq C\Leb{d}$ for all $t\in [0,T]$.
\end{itemize}
The least $C$ in (ii) is called compressibility constant of $\eeta$ and denoted by $C(\eeta)$. 
\end{definition}

Equivalently, thanks to the disintegration theorem (or existence of conditional probabilities, even though in this case the base space $\R^d$ has
$\sigma$-finite measure), a GRLF is $\eeta\in\Measuresp{C([0,T];\R^d)}$, concentrated on absolutely continuous solutions to \eqref{eq:ODE} and
with $(\ev_0)_\#\eeta=\Leb{d}$, such that there exists a constant $C>0$ such that $(\ev_t)_\#\eeta\leq C\Leb{d}$
for all $t\in [0,T]$. Nevertheless, we will often use the disintegrated point of view to clarify our computations. In addition, 
condition (ii) can be formulated as
$$
\int_{C([0,T];\R^d)}\phi(\gamma(t))\,d\eeta(\gamma)=
\int_{\R^d}\int_{C([0,T];\R^d)}\phi(\gamma(t))\,d\eta_x(\gamma)\,dx\leq C\int_{\R^d}\phi(x)\,dx
$$
for any nonnegative Borel $\phi$. 

In the following remark we discuss some cases when existence of a GRLF does not correspond to a RLF.

\begin{remark}
{\rm  A beautiful example in \cite{Pap_2025} shows somehow the necessity to deal with {\it generalized} flows in the absence of sufficient smoothness: for any
 integer $k\geq 1$ and $\alpha\in (0,1)$ a divergence-free vector field $\bb\in C^k_t(C^{0,\alpha}_x)$ is built such that there is no measure-preserving
 selection of characteristics. In this case the superposition principle obviously grants existence of a GRLF, even a measure-preserving one 
 as in Brenier-Schnirelman's theory of generalized incompressible Euler equations (see for instance \cite{AF} and the references therein). 
 More generally, as suggested by the author, without smoothness
 assumptions it is not clear if existence of a RLF for a divergence-free vector field implies that the RLF is measure-preserving.
 
 Still in connection with the impossibility to provide well-behaved selections of characteristics, the well-known example \cite{Depauw_03} of a bounded, divergence-free vector field in $L^1_{\rm loc}\bigl((0,1];BV(\T^d;\R^d)\bigr)$ for which forward uniqueness of \eqref{eq:CE} fails even in the class $\mathcal R_+$
 has been revisited in \cite{Pit_2024}, proving existence and uniqueness of the corresponding GRLF $\eeta$. This GRLF, due to an instantaneous branching at $t=0$, does not have any RLF counterpart, namely a selection of trajectories, indeed the failure of forward uniqueness of \eqref{eq:CE} corresponds to the conditional
 probabilities $\eeta_x$ not being Dirac masses.
}
 \end{remark}

\begin{remark}[Invariance of RLF and GRLF in the Lebesgue equivalence class]\label{rem:invariant}{\rm
The bounded compression condition is useful to show that the notions of RLF and GRLF are invariant in the Lebesgue
equivalence class of $\bb$. Indeed, if $\tilde{\bb}$ is a modification of $\bb$ in a $\Leb{d+1}$-negligible Borel set $M$, we know
that for $\Leb{1}$-a.e. $t\in (0,T)$ the set $\{\gamma:\ (t,\gamma(t))\in M\}$ is $\eeta$-negligible, to be precise, for all $t\in (0,T)$
such that the $t$-section $M_t$ of $M$ is $\Leb{d}$-negligible. Then, Fubini's theorem gives that for $\eeta$-a.e. $\gamma$
the curve $\gamma(t)$ belongs to $M_t$ for a $\Leb{1}$-negligible set of times, whence the invariance of condition (i) follows.}
\end{remark}

\medskip

\begin{remark} [Replacement of $L^\infty$ with $L^p$ bounds on $\rho$]
{\rm In order to have an axiomatization of flows invariant under modifications $\bb$ in $\Leb{d+1}$-negligible sets, the
$L^\infty$ bound could be simply replaced by a non-quantitative bound $\XX(t,\cdot)_\#\Leb{d}\ll\Leb{d}$ for
all (or even $\Leb{1}$-almost all) $t\in (0,T)$, see the argument in Remark~\ref{rem:invariant}. In this connection, on the Eulerian side, in the last few years,
in the class of Sobolev vector fields, many efforts have been devoted to the search of the optimal range of exponents $(p,q)$ (with
$q$ local summability of $\nabla \bb_t$, $p$ local summability of $\rho_t$) ensuring local well
posedness of \eqref{eq:CE}. As soon as $\frac 1p +\frac 1 q<1+\frac 1{dp}$, well posedness in the
class of nonnegative $\rho$'s is ensured by \cite{BCD_2021}, and it is an open problem for signed solutions
(the restriction to nonnegative solutions
comes from the use of the superposition principle, Theorem~\ref{thm:superpo}); on the other hand, the counterexamples built in \cite{BCK}
(see also \cite{MoSa_2020}) show that when $\frac 1p +\frac 1 q>1+\frac 1{dp}$ uniqueness fails.

At the abstract level, it would be interesting to explore a different axiomatization of RLF and GRLF with a quantitative $L^q$ condition,
$q<\infty$, on the time marginals (the main difficulty is due to the fact that $L^q$ bounds on the flow map are not stable under
composition).
}\end{remark}

Even though our main focus in this paper is on the uniqueness for the continuity equation and the self-intersecting properties of the curves
$\XX(t,\cdot)$, in the following remark we recall the known implications at the level of existence, between these two notions of flow.
The questions of uniqueness, or existence {\it and} uniqueness are more subtle and they will be discussed in the next section.

\begin{remark}[Implications at the level of existence]
{\rm Existence of a RLF yields existence of a GRLF, simply setting $\eta_x=\delta_{\XX(\cdot,x)}$.  
On the other hand, existence of a GRLF is equivalent to the existence of nonnegative solutions with $\rho_t$ to \eqref{eq:CE}  
starting from any nonnegative $\rho_0\in L^1\cap L^\infty(\R^d)$ with $\|\rho_t\|_\infty\leq C\|\rho_0\|_\infty$ for some $C$
independent of $\rho_0$: indeed, in one direction it suffices to define $\rho_t$ as density of push forward measures, namely
$$
\rho_t\Leb{d}=(\ev_t)_\#\ssigma\qquad t\in [0,T],
$$
with $\ssigma=\int \rho_0(x)\eta_x\,d\Leb{d}(x)=\rho_0\circ\ev_0\,\eeta$. Conversely, if we take $\rho^R_0=\uno_{B_R(0)}$, the superposition principle grants
$\eeta^R$ satisfying \eqref{eq:Lag1}, so that $(\ev_t)_\#\eeta^R\leq C\Leb{d}$ with $C$ independent of $R$; then, a compactness argument based on the growth condition \eqref{eq:growth} grants the existence
of a $\sigma$-finite positive measure $\eeta$ in $C([0,T];\R^d)$, finite on all sets $\{\gamma:\ \gamma(0)\in B_R(0)\}$ and concentrated on
absolutely continuous solutions to the ODE \eqref{eq:ODE}, with the
property that $(\ev_t)_\#\eeta\leq C\Leb{d}$ for all $t\in [0,T]$. Then, the conditional probability measures $\eeta_x$ provide a GRLF $\eeta$ with
$C(\eeta)\leq C$. We omit the details, as this implication will not be used in the paper.
}\end{remark}

\section{Comparing forward notions of well posedness and nonbranching}\label{sec:nb}

We first state the notion of forward nonbranching, already introduced in \cite{BB21}.

\begin{definition}[Forward nonbranching]\label{deffnb}
Let $\Gamma\subseteq C([0,T];\R^d)$. We say that $\Gamma$ is forward nonbranching if any two different curves $\gamma$, $\tilde\gamma$
in $\Gamma$ either do not intersect in $(0,T]$,
or they coincide after their first positive time of intersection.
\end{definition}

Therefore, according to this definition, the set of positive times in which two distinct curves in a forward nonbranching family intersect can only be
empty or a segment $[t,T]$ with $t>0$. We shall denote by $G$ the forward branching set
\begin{equation}\label{eq:fwd_branching}
G=\left\{(\gamma,\tilde\gamma)\in \bigl(C([0,T];\R^d)\bigr)^2:\ \text{$\gamma\neq\tilde\gamma$, $\gamma(t)=\tilde\gamma(t)$ for some $t\in (0,T)$, 
$\gamma\vert_{[t,T]}\not\equiv\tilde\gamma\vert_{[t,T]}$}\right\}.
\end{equation}

\begin{lemma}\label{lem:Borel}
Let $K\subset\R^d$ be compact and let $\YY:K\to C([0,T];\R^d)$ be continuous and injective. The collection made by all pairs $(x,y)\in K\times K$ 
with $(\YY(\cdot,x),\YY(\cdot,y))\in G$ is a Borel set.
\end{lemma}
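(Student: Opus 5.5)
The plan is to write the set in question, call it $S=\{(x,y)\in K\times K:\ (\YY(\cdot,x),\YY(\cdot,y))\in G\}$, through countable unions and intersections of closed sets. Continuity of $\YY$ will make every elementary condition closed or open in $(x,y)$. Injectivity of $\YY$ turns the condition $\gamma\neq\tilde\gamma$ into $x\neq y$, which is open in $K\times K$.

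For the remaining conditions, fix $(x,y)$ with $x\neq y$ and set
$$
Z(x,y)=\{t\in[0,T]:\ \YY(t,x)=\YY(t,y)\},
$$
a closed set. The pair lies in $G$ if and only if there is $t\in(0,T)\cap Z(x,y)$ and some $s\in[t,T]$ with $\YY(s,x)\neq\YY(s,y)$. Since $Z(x,y)$ is closed and the disagreement set is open in $[0,T]$, we may take $s>t$. We may also shrink to rational data: the condition is equivalent to the existence of rationals $0<q<q'<T$ and $s\in(q',T]\cap\mathbb{Q}$, or $s=T$, such that $Z(x,y)\cap[q,q']\neq\emptyset$ and $\YY(s,x)\neq\YY(s,y)$. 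Indeed, given $t<s$ as above we choose rationals $q<t<q'<s$ with $q>0$, and if $s$ is not rational we use openness of the disagreement set to move $s$ to a nearby rational or keep $s=T$. Conversely, a point $t\in Z\cap[q,q']$ has $t\in(0,T)$ and $t<s$.

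Next we show that each piece is Borel for fixed rationals.

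The set $\{(x,y):\ Z(x,y)\cap[q,q']\neq\emptyset\}$ is closed. It equals $\{(x,y):\ \min_{t\in[q,q']}|\YY(t,x)-\YY(t,y)|=0\}$. This minimum is a continuous function of $(x,y)$, because $\YY$ is continuous into $C([0,T];\R^d)$ with the sup norm, so the minimum over a fixed compact interval depends continuously on the curves.

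The set $\{(x,y):\ \YY(s,x)\neq\YY(s,y)\}$ is open, since $\ev_s\circ\YY$ is continuous.

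Hence
$$
S=\{x\neq y\}\cap\bigcup_{q,q',s}\Bigl(\{Z\cap[q,q']\neq\emptyset\}\cap\{\YY(s,x)\neq\YY(s,y)\}\Bigr)
$$
is a countable union of intersections of closed and open sets, so it is Borel.

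The only delicate step is the reduction to rational parameters, specifically handling the fact that the intersection time $t$ ranges over the open interval $(0,T)$. Compactness of $[q,q']\subset(0,T)$ is what keeps the intersection condition closed, and continuity of the curves lets the disagreement point $s$ be taken rational or equal to $T$. Injectivity is used only to identify $\gamma\neq\tilde\gamma$ with $x\neq y$. Without it we could use the open condition $\|\YY(\cdot,x)-\YY(\cdot,y)\|_\infty>0$ instead, which is in fact already implied by the existence of $s$.
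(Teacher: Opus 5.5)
Your proof is correct and follows essentially the same route as the paper. You split the branching condition into countably many pieces that are closed, thanks to compact time windows, or open, and then pull back by the continuous map $(x,y)\mapsto(\YY(\cdot,x),\YY(\cdot,y))$; the paper does this directly in path space with closed sets $G_n$ (requiring $s>t\geq 1/n$ and $|\gamma(s)-\tilde\gamma(s)|\geq 1/n$), which shows the set is $F_\sigma$ and, as you observe, makes injectivity unnecessary.
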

\begin{Proof} We can write $G$ as the countable union of the sets
$$G_n:=\left\{(\gamma,\tilde\gamma)\in \bigl(C([0,T];\R^d)\bigr)^2:\ \text{$\gamma(t)=\tilde\gamma(t)$, $\, |\gamma(s)-\tilde{\gamma}(s)| \geq \frac 1n$ for some $s >t \geq \frac 1n$
}\right\},$$
for $n \geq 1$, which are easily seen to be closed. We conclude noticing that the preimage of $G_n$ 
under the continuous map $(x,y) \mapsto (\YY(\cdot,x),\YY(\cdot,y))$ is closed in $K \times K$, therefore the collection in the statement is a countable
union of closed sets.
\end{Proof}

The equivalence principle stated in Theorem~\ref{thm:main} below, established in \cite{Am_2004}, is reminiscent of the general principles relating pathwise
uniqueness to uniqueness in law in probability theory. The difference is due to the selection principle, in the case
of RLF and GRLF granted by the bounded compression condition, in Probability theory (typically) by perturbations
with additive or multiplicative noise. For the sake of completeness, we provide some details about the proof of the
equivalence between (a) and (b) and the consequences (i), (ii) of this equivalence. The new part, in comparison with
\cite{Am_2004}, is the forward nonbranching property (iii). 

\smallskip

\begin{theorem} \label{thm:main} Under assumption \eqref{eq:growth}, the following conditions are equivalent:
\begin{itemize}
\item[(a)] there exists at most one GRLF $\eeta=\int\eta_x\,d\Leb{d}(x)$;
\item[(b)] for any nonnegative initial datum $\rho_0\in L^1\cap L^\infty(\R^d)$, \eqref{eq:CE} has at most one solution $\rho_t\in\mathcal R_+$ starting from $\rho_0$.
\end{itemize}
Moreover, any of these conditions implies that, if there exists a GRLF, then
\begin{itemize}
\item[(i)] $\eta_x$ is a Dirac mass for $\Leb{d}$-a.e. $x\in\R^d$;
\item[(ii)] the unique solution $\rho_t$ to \eqref{eq:CE} in the class ${\mathcal R}_+$ is representable as $(\ev_t)_\#(\int_{\R^d}\rho_0(x)\eeta_x\,dx)$;
\item[(iii)] $\eeta=\int_{\R^d}\eta_x\,dx$ is concentrated on a forward nonbranching Borel family $\Gamma\subseteq C([0,T];\R^d)$.
\end{itemize}
\end{theorem}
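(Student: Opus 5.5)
Here is the plan. I will prove (a)$\Leftrightarrow$(b), then (i) and (ii), and finally the new part (iii).

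\emph{(b)$\Rightarrow$(a).} Let $\eeta^1,\eeta^2$ be two GRLFs. For every nonnegative $\rho_0\in L^1\cap L^\infty$ and every Borel $E\subseteq\R^d$ with $\rho_0\uno_E\in L^1$, the measures $(\ev_t)_\#(\rho_0\uno_E\circ\ev_0\,\eeta^i)$ are densities in $\mathcal R_+$ solving \eqref{eq:CE} from the same datum, so by (b) they coincide. This forces equality of all finite-dimensional marginals of the form $(\ev_0,\ev_t)_\#\eeta^i$. To go further, iterate the same trick at intermediate times: restrict to $\{\gamma(t_1)\in E_1,\ldots,\gamma(t_k)\in E_k\}$ and restart the equation at time $t_k$, using uniqueness for the shifted problem. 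The shifted problem has datum $\rho_{t_k}\uno_{E_k}\in\mathcal R_+$, and the restricted measure still has bounded compression. This gives equality of all finite-dimensional distributions, hence $\eeta^1=\eeta^2$.

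For the restarting step to be legitimate, (b) has to be applicable from arbitrary initial times $s$. I will justify this by the standard gluing argument: a solution on $[s,T]$ is concatenated with the GRLF solution on $[0,s]$, which is possible since we are in the case where a GRLF exists.

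\emph{(a)$\Rightarrow$(b) and (i), (ii).} Given a solution $\rho\in\mathcal R_+$ from $\rho_0$, Theorem~\ref{thm:superpo} yields $\ssigma$. Disintegrate $\ssigma$ with respect to $\ev_0$, and build a new GRLF by replacing $\eta_x$ with $\ssigma_x$ on a set where $\rho_0\ge\delta>0$. Bounded compression is kept, since the replaced part has marginals at most $\delta^{-1}\|\rho\|_\infty$. By (a), $\ssigma_x=\eta_x$ for a.e.\ $x$ in that set, and letting $\delta\downarrow0$ gives $\rho_t=(\ev_t)_\#(\rho_0\eeta)$, which is (b) and (ii).

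For (i), mix the two halves of $\eta_x$. If $\eta_x$ were not Dirac on a positive-measure set, split $C([0,T];\R^d)$ by a Borel set $B$ with $0<\eta_x(B)<1$, chosen measurably in $x$ through a countable base. Then $\eta_x(\cdot\,|B)$ defines a different GRLF on that set, with compression at most $\eta_x(B)^{-1}$ times the original; after restricting to where $\eta_x(B)\ge\delta$, it stays bounded. This contradicts (a).

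\emph{(iii), the main obstacle.} By (i), $\eeta=\XX_\#\Leb{d}$ with $\XX$ Borel. By Lusin's theorem, exhaust $\R^d$ up to a null set by compacts $K_j$ on which $\XX$ is continuous; $\XX$ is injective since $\gamma(0)=x$. It suffices to show that for each pair $K=K_i\cup K_j$ the Borel set $C=\{(x,y)\in K\times K:(\XX(\cdot,x),\XX(\cdot,y))\in G\}$ of Lemma~\ref{lem:Borel} satisfies $\Sigma(C)=0$ for every $\Sigma\in\Gamma(\mu,\mu)$, with $\mu$ normalized Lebesgue on $K$.

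Granting this, \eqref{eq:Ketterer1} gives, for each $\eps$, disjoint-from-$C$ rectangles $\tilde A\times\tilde B$ with $\mu(\tilde A),\mu(\tilde B)\ge1-\eps$. Intersecting countably many such rectangles, I obtain a full-measure $N\subseteq K$ with $(N\times N)\cap C=\emptyset$. Taking a countable union over pairs $(i,j)$ then gives $\Gamma=\XX(N)$, a Borel set (injective continuous image on each compact) on which $\eeta$ is concentrated and which is forward nonbranching.

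To prove $\Sigma(C)=0$, argue by contradiction. Decompose $C$ into countably many pieces on which the first intersection time lies in $[t_n,T)$ with $t_n\ge 1/n$ and separation after it, as in the sets $G_n$. Suppose $\Sigma$ charges one piece, and pick a measurable intersection time $\tau(x,y)$. Build a competitor GRLF as follows: on a portion of mass $\lambda$, let the curve starting from $x$ follow $\XX(\cdot,x)$ up to $\tau$ and then $\XX(\cdot,y)$ afterwards, i.e.\ replace $\eta_x$ by $(1-\lambda')\delta_{\XX(\cdot,x)}+\lambda'\int(\text{concatenated curve})\,d\Sigma_x(y)$.

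The concatenated curves are still ODE solutions. The time marginals are at most the sum of those of $\XX$ from $x$ and from $y$, and $\Sigma$ has marginals $\mu$, so bounded compression is preserved with constant $\le 2C(\eeta)$. Because $\tau$ is a branching time, the new curves differ from $\XX(\cdot,x)$, so the competitor differs from $\eeta$, contradicting (a). In fact it also contradicts (i), being non-Dirac.

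The delicate points are the measurable selection of $\tau$ (take the first intersection time, which is lower semicontinuous on the closed pieces) and checking that the compression bound holds for all $t$. I expect this step to be the crux.
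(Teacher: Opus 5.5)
Your proposal is correct. For (a)$\Rightarrow$(b), (ii) and (iii) it is essentially the paper's argument: superposition plus replacement of $\eta_x$ where $\rho_0\ge\delta$ for the first two, and for (iii) Kellerer duality (Theorem~\ref{thm:Kellerer}), switching a branching pair at the first intersection time, and the compression estimate. For (b)$\Rightarrow$(a) and for (i) you take a different route.

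The paper defers (b)$\Rightarrow$(a) to Theorem~5.4 of \cite{Am_2004}. There, (b) is used to show that every GRLF is a Dirac mass a.e.: from the set where $\eta_x$ is not Dirac one builds two distinct solutions in $\mathcal R_+$ with the same datum. So (i) comes out of (b), and uniqueness follows. You instead identify two GRLFs directly through their finite-dimensional laws. You then get (i) from (a) by conditioning $\eta_x$ on a basic open set $B$ with $\delta\le\eta_x(B)<1$, which keeps bounded compression with constant at most $(1+\delta^{-1})C(\eeta)$. Your route is self-contained and shows that (b) alone fixes the law of any GRLF, without passing through the Dirac property. The paper's route is shorter on the page because it quotes \cite{Am_2004}, and it yields (i) simultaneously.

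Three remarks on details.

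First, the restart at time $t_k$ and the gluing are superfluous. As written, the datum at $t_k$ is also misidentified: it is the $\ev_{t_k}$-marginal of the restricted measure, not $\rho_{t_k}\uno_{E_k}$. It suffices to note that $\prod_{j=0}^k\uno_{E_j}(\gamma(t_j))\,\eeta^i$ (with $t_0=0$ and $E_0$ bounded) induces a solution in $\mathcal R_+$ on all of $[0,T]$, with compression at most $C(\eeta^i)$. Its initial datum is fixed by the induction hypothesis, so (b) applies directly from time $0$, and the $t_j$ need not even be ordered.

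Second, in (iii) you do not get a full-measure set $N$ with $(N\times N)\cap C=\emptyset$ by intersecting the sets $\tilde A_k\cap\tilde B_k$; that only gives measure at least $1-2\sum_k\eps_k$. Take instead $N=\bigcup_m\bigcap_{k\ge m}(\tilde A_k\cap\tilde B_k)$ with $\sum_k\eps_k<\infty$, exactly as the paper does with $\limsup_k\mathcal N_{2^{-k}}$. Alternatively, use the sharp form of Theorem~\ref{thm:Kellerer}, where the minimum is attained.

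Third, your use of normalized Lebesgue measure on $K$ in place of the Gaussian is a genuine simplification. The measure $\int_K\Sigma_x\,dx$ then has second marginal $\Leb{d}\res K$, so the paper's truncation to $F_\tau$ is no longer needed.
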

\begin{Proof} $(a)\Rightarrow(b)$. Since any solution $\rho_t$ in this class has a representation \eqref{eq:Lag1} via the superposition principle, it suffices to
check that $\ssigma$ is uniquely determined by $\eeta$. Indeed, we can represent $\ssigma$ as $\int\rho_0(x)\sigma_x\,d\Leb{d}(x)$, with $\sigma_x$ concentrated
probability measures on absolutely continuous solutions to the ODE \eqref{eq:ODE} starting at $x$. As $\|\rho_t\|_\infty$ is uniformly bounded, one also has
$(\ev_t)_\#\ssigma\leq C\Leb{d}$ for all $t\in [0,T]$. Now, assume by contradiction that $\sigma_x\neq\eta_x$ in a set with positive $\rho_0\Leb{d}$-measure, 
so that there exists $\tau>0$ such that $\Leb{d}(F_\tau)>0$, with
$$
F_\tau=\{x\in\R^d: \rho_0(x)\geq\tau,\,\,\sigma_x\neq\eta_x\}.
$$
If we define
$$
\tilde\eta_x:=
\begin{cases}
\eta_x & \text{if $x\in\R^d\setminus F_\tau$};
\\
\sigma_x & \text{if $x\in F_\tau$}
\end{cases}
$$
we obtain a different GRLF $\tilde\eta$ with $C(\tilde{\eeta})\leq C(\eeta)+C/\tau$, a contradiction. This proves uniqueness of $\eeta$
and the representation of solutions to \eqref{eq:CE} in (ii). 
\smallskip
 
$(b)\Rightarrow(a)$. See Theorem~5.4 of \cite{Am_2004} (or Theorem~3.10 of \cite{AmCri_2014}). The strategy of the proof is to show that any GRLF $\eta_x$ is a Dirac
mass for $\Leb{d}$-a.e. $x\in\R^d$ (thus proving (i) as well), arguing by contradiction and, in particular, using the set of points $x$ where this is not the case to build two
distinct solutions to \eqref{eq:CE} in the class $\mathcal R_+$ and with the same initial condition.

\smallskip

Now, let us prove (iii). Since $\eta_x=\delta_{\XX(\cdot,x)}$ for $\Leb{d}$-a.e. $x\in\R^d$, it suffices to show the existence of a 
 $\Leb{d}$-negligible Borel set $\mathcal N\subseteq\R^d$ such that the family of curves $\{\XX(\cdot,x)\}_{x\in\R^d\setminus\mathcal N}$ 
is forward nonbranching. Let now $\mu_d$ be the standard Gaussian in $\R^d$ (or any other probability measure mutually absolutely 
continuous with $\Leb{d}$); it suffices to show the existence for
any $\epsilon>0$ of a Borel set $\mathcal N_\epsilon\subseteq\R^d$ such that  $\mu_d(\mathcal N_\epsilon)<2\epsilon$ and such that
the family of curves $\{\XX(\cdot,x)\}_{x\in\R^d\setminus\mathcal N_\epsilon}$ is forward nonbranching, as the $\mu_d$-negligible set
$\mathcal N=\limsup_{k\to\infty}\mathcal N_{2^{-k}}$ gives the result.

Now, fix $\epsilon>0$ and a compact set $K\subset\R^d$ with $\mu_d(\R^d\setminus K)<\epsilon$ and
$x\mapsto\XX(\cdot,x)$ is continuous. Following a strategy similar to \cite{Bia_Bo_2020}, we will apply Theorem~\ref{thm:Kellerer} with $X=Y=\R^d$, $\mu_X=\mu_Y=\mu_d$ 
and
$$
C=\left\{(x,y)\in K\times K: (\XX(\cdot,x),\XX(\cdot,y))\in G\right\},
$$
where $G$ is the forward branching set in \eqref{eq:fwd_branching}.
Thanks to Lemma~\ref{lem:Borel} and the continuity of $x\mapsto\XX(\cdot,x)$ on $K$, $C$ is a Borel set. The ``switching'' operator
$$
{\sf Sw}(\gamma,\tilde\gamma)(s):=
\begin{cases}
\gamma(s) &\text{if $s\in [0,t_{(\gamma,\tilde\gamma)}]$};\cr\cr
\tilde\gamma(s) &\text{if $s\in [t_{(\gamma,\tilde\gamma)},T]$},
\end{cases}
$$
where $t_{(\gamma,\tilde{\gamma})}>0$ is the first time of intersection, is well defined on $G$ and
maps $(\gamma,\tilde\gamma)$ to a curve ${\sf Sw}(\gamma,\tilde\gamma)\neq\gamma$. 

Theorem~\ref{thm:Kellerer} and \eqref{eq:Ketterer1} grant that
either there exists $\Sigma\in\Probabilities{\R^d\times\R^d}$ having $\mu_d$ as marginals with $\Sigma(C)>\epsilon$, or there exist Borel sets $A,\,B\subseteq\R^d$
with $\mu_d(A)+\mu_d(B)>2-\epsilon$ and $(A\times B)\cap C=\emptyset$. If the second possibility occurs, we may take
$\mathcal N_\epsilon=\R^d\setminus (K\cap A\cap B)$, as
$$
\mu_d(A\cap B)\geq\mu_d(A)+\mu_d(B)-\mu_d(A\cup B)>1-\epsilon.
$$
To conclude the proof, it suffices to show that the first possibility does not occur. If such $\Sigma$ exists, write $\hat{\Sigma}=\uno_C\Sigma$
and $\hat{\Sigma}=\int_{\R^d}\hat{\Sigma}_x\,d\mu(x)$, with $\mu=\rho\Leb{d}$, the marginal of $\hat{\Sigma}$ with respect to the first factor
and $\hat{\Sigma}_x\in\Probabilities{K}$. As $\mu(\R^d)=\Sigma(C)>0$, we have 
$\Leb{d}(F_\tau)>0$ for some $\tau>0$, with $F_\tau=\{\rho>\tau\}$. Now, we may replace $\eta_x=\delta_{\XX(\cdot,x)}$ by 
$$
\tilde\eta_x:=
\begin{cases}
{\sf Sw}(\XX(\cdot,x),\XX(\cdot,y))_\#\hat{\Sigma}_x &\text{if $x\in F_\tau$;}\cr\cr
\eta_x & \text{if $x\in\R^d\setminus F_\tau$.}
\end{cases}
$$
It is clear that $\tilde\eta_x\neq\eta_x$, as $\Leb{d}(F_\tau)>0$ and 
${\sf Sw}(\gamma,\tilde\gamma)\neq\gamma$ whenever $(\gamma,\tilde{\gamma})\in G$.
It is not difficult to show that still $\tilde\eeta$ has bounded compression, whence the contradiction. 

Let us prove, indeed, that $\tilde\eeta$ has bounded compression; for any Borel set $E\subseteq\R^d$ and any $t\in [0,T]$
we can estimate from above 
$$
\tilde\eeta\bigl(\{\gamma:\ \gamma(t)\in E\}\bigr)\leq C(\eeta)\Leb{d}(E)+
\int_{F_\tau}\hat{\Sigma}_x\bigl(\{y:\ {\sf Sw}(\XX(\cdot,x),\XX(\cdot,y))(t)\in E\}\bigr)\,dx.
$$
By the very definition of ${\sf Sw}$, the second addendum in the right hand side can be estimated from above by
\begin{equation}
\Leb{d}\bigl(\{x\in F_\tau:\ \XX(t,x)\in E\}\bigr)+\int_{F_\tau}\hat{\Sigma}_x\bigl(\{y:\ \XX(t,y)\in E\}\bigr)\,dx.
\end{equation}
The first term can be estimated once more with $C(\eeta)\Leb{d}(E)$; the second term, recalling that
$\mu_d\leq C_d\Leb{d}$ and $\hat{\Sigma}\leq\Sigma$, by
$$
\frac{1}{\tau}\int_{\R^d}\hat{\Sigma}_x\bigl(\{y:\ \XX(t,y)\in E\}\bigr)\,d\mu_d(x)\leq
\frac{1}{\tau}\mu_d(\{y:\XX(t,y)\in E\})\leq\frac{C_d}{\tau}C(\eeta)\Leb{d}(E),
$$
concluding the proof.
\end{Proof}

\begin{remark}[Some open problems] {\rm 
It would be interesting to know to what extent the equivalence between (a) and (b) in Theorem~\ref{thm:main} can be sharpened. For instance, what are the implications
at the level of the continuity equation \eqref{eq:CE} if we only assume existence and uniqueness of the RLF, not assuming anything on the GRLF? While
existence in the class ${\mathcal R}$ is obvious, the superposition principle of Theorem~\ref{thm:superpo} does not seem to be sufficient to grant uniqueness, even
in ${\mathcal R}_+$, this is an open question. In this connection, in \cite{Bo_Gu}, a bounded vector field $\bb$ is built in one space dimension
whose characteristics are constant: for this vector field existence and uniqueness even of the GRLF are trivial, as well as existence and uniqueness of 
nonnegative measure-valued solutions and solutions in the class ${\mathcal R}_+$ (namely, the constant ones), while uniqueness is proved to fail for measure-valued solutions.  It would be interesting to know if uniqueness for \eqref{eq:CE} might fail even in the class ${\mathcal R}$.
 }\end{remark}

\begin{corollary}[Enhanced uniqueness of the RLF]\label{cor:main}
Assume  that any of the equivalent conditions in Theorem~\ref{thm:main} holds  and that there exists a GRLF. Then:
\begin{itemize}
\item[(a)] the RLF $\XX$ exists and is unique, and $\eeta_x=\delta_{\XX(\cdot,x)}$ for $\Leb{d}$-a.e. $x\in\R^d$;
\item[(b)] there exists a $\Leb{d}$-negligible Borel set $\mathcal N\subseteq\R^d$ such that 
the family of curves $\{\XX(\cdot,x)\}_{x\in\R^d\setminus\mathcal N}$ is forward nonbranching;
\item[(c)] If the same conditions
hold for $\tilde{\bb}_t:=-\bb_{T-t}$ then $\Leb{d}\ll (\ev_T)_\#\eeta=\XX(T,\cdot)_\#\Leb{d}$, 
and there exists a $\Leb{d}$-negligible Borel set $\mathcal N\subseteq\R^d$ such that 
the family of curves $\{\XX(\cdot,x)\}_{x\in\R^d\setminus\mathcal N}$ is disjoint in $[0,T]$.
\end{itemize}
\end{corollary}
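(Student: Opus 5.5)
The plan is to read off (a) and (b) directly from Theorem~\ref{thm:main}, and to obtain (c) by applying the same theorem to the time-reversed field $\tilde\bb$. The two tools for (c) are the representation (ii) of Theorem~\ref{thm:main} and the uniqueness argument in the step $(a)\Rightarrow(b)$ of its proof.

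For (a), let $\eeta$ be the GRLF. By Theorem~\ref{thm:main}(i), $\eta_x$ is a Dirac mass $\delta_{\gamma_x}$ for $\Leb{d}$-a.e. $x$. I set $\XX(\cdot,x):=\gamma_x$ on this set and $\XX(\cdot,x):=x$ elsewhere. To see that $\XX$ is Borel, note that for every $t$ and every bounded continuous $\phi$ we have $\phi(\XX(t,x))=\int\phi(\gamma(t))\,d\eta_x(\gamma)$, which is Borel in $x$ by the weak$^*$ measurability of $x\mapsto\eta_x$. Condition (i) of the RLF definition follows from (i) of the GRLF definition. Condition (ii) follows because $\XX(t,\cdot)_\#\Leb{d}=(\ev_t)_\#\eeta\le C(\eeta)\Leb{d}$. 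For uniqueness, any RLF $\YY$ induces the GRLF $\delta_{\YY(\cdot,x)}$, so uniqueness of the GRLF gives $\YY(\cdot,x)=\XX(\cdot,x)$ for $\Leb{d}$-a.e. $x$. Statement (b) is then exactly Theorem~\ref{thm:main}(iii) rewritten through (a).

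For (c), let $\YY$ be the RLF of $\tilde\bb$ given by (a) applied to $\tilde\bb$, and write $(\ev_T)_\#\eeta=f\Leb{d}$ with $0\le f\le C(\eeta)$.

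\emph{Absolute continuity.} Suppose $Z'\subseteq\{f=0\}$ is bounded with $\Leb{d}(Z')>0$. The curves $s\mapsto\YY(T-s,z)$ solve \eqref{eq:ODE} for $\bb$ and end at $z$. Consider $\ssigma:=\int_{Z'}\delta_{\YY(T-\cdot,z)}\,dz$. Its time marginals are bounded by $C(\YY)\Leb{d}$, so $(\ev_t)_\#\ssigma=\rho_t\Leb{d}$ with $\rho\in\mathcal R_+$ a solution of \eqref{eq:CE}, and $\rho_0\in L^1\cap L^\infty$. By Theorem~\ref{thm:main}(ii) for $\bb$, $\rho_T\Leb{d}=(\ev_T)_\#(\rho_0\circ\ev_0\,\eeta)\le\|\rho_0\|_\infty f\Leb{d}$, which vanishes on $Z'$. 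On the other hand $(\ev_T)_\#\ssigma=\Leb{d}\res Z'\neq0$, a contradiction. Hence $\Leb{d}\ll(\ev_T)_\#\eeta$.

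\emph{Identification of reversed curves.} Symmetrically, the measure $\tilde\ssigma:=\int\delta_{\XX(T-\cdot,x)}\,dx$ is concentrated on solutions for $\tilde\bb$. It has initial marginal $f\Leb{d}$ with $f$ bounded, and bounded time marginals. The disintegration argument in the proof of $(a)\Rightarrow(b)$ of Theorem~\ref{thm:main}, applied to $\tilde\bb$, shows that its conditional measures coincide with $\delta_{\YY(\cdot,z)}$ for $f\Leb{d}$-a.e. $z$. In other words,
$$\XX(T-\cdot,x)=\YY(\cdot,\XX(T,x))\qquad\text{for $\Leb{d}$-a.e. $x$.}$$
This step is the delicate one. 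The conditional measures of $\tilde\ssigma$ with respect to $\ev_0$ need not a priori be Dirac masses, and we must check that uniqueness of the GRLF for $\tilde\bb$ really forces them to be $\delta_{\YY(\cdot,z)}$. The switching trick used in that proof, with $F_\tau=\{f>\tau\}$, handles this, so the identity holds off a $\Leb{d}$-null set of $x$.

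\emph{Disjointness.} Let $\mathcal N$ be the union of three null sets: the null set from (b), the set where the identity above fails, and $\XX(T,\cdot)^{-1}(N')$, where $N'$ is the $\Leb{d}$-null exceptional set of $\YY$. The last preimage is null because its $\Leb{d}$-measure equals $f\Leb{d}(N')=0$. Take $x,y\notin\mathcal N$ with $\XX(t,x)=\XX(t,y)$ for some $t\in[0,T]$. If $t=0$ then $x=y$. If $t>0$, forward nonbranching gives $\XX(T,x)=\XX(T,y)=:z$. Then both reversed curves equal $\YY(\cdot,z)$, so $\XX(\cdot,x)=\XX(\cdot,y)$ on $[0,T]$ and in particular $x=y$.
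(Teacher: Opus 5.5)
Your proof is correct. It follows the paper for (a) and (b), and for (c) it uses the same mechanism (time reversal plus uniqueness for the reversed field), organized differently.

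\emph{The paper's route.} It pushes the GRLF of $\tilde\bb$ forward by the time inversion $j$. Uniqueness for $\bb$ at the level of path measures then gives $\YY(T-\cdot,y)=\XX(\cdot,x(y))$ with $x(y)=\YY(T,y)$, for a.e.\ final point $y$. From this, a Lusin argument making $\YY(T,Y_0)$ Borel yields $\XX(T,\cdot)_\#\Leb{d}\geq C(\tilde{\eeta})^{-1}\Leb{d}$. Disjointness is only sketched, by ``repeating the argument symmetrically'' and taking the union of the exceptional sets of $\XX$ and $\YY$.

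\emph{Your route.} You prove absolute continuity at the Eulerian level, by contradiction through Theorem~\ref{thm:main}(ii). This needs neither Lusin's theorem nor images of Borel sets. Running the same computation with an arbitrary bounded $Z'$ gives $\uno_{Z'}=\rho_T\leq C(\YY)f$, which is the paper's quantitative lower bound, since $C(\YY)=C(\tilde{\eeta})$. Your pathwise identification goes in the opposite direction: you get $x=\YY(T,\XX(T,x))$ a.e., an essential injectivity of $\XX(T,\cdot)$, whereas the paper gets $y=\XX(T,\YY(T,y))$ a.e. Combined with forward nonbranching of $\XX$ alone, your identification gives a complete disjointness proof that is more explicit than the paper's sketch.

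Three small corrections:
\begin{itemize}
\item $\mathcal N$ must also contain $\{x:\ \XX(0,x)\neq x\}$, since both of your cases conclude $x=y$ through $\XX(0,\cdot)={\rm id}$.
\item The third null set $\XX(T,\cdot)^{-1}(N')$ is not needed.
\item What you borrow from $(a)\Rightarrow(b)$ is the replacement of $\delta_{\YY(\cdot,z)}$ by $\tilde{\sigma}_z$ on $F_\tau$, not the switching operator of (iii). This replacement still works although $f\notin L^1$, because it only uses $f\geq\tau$ on $F_\tau$, the bounded compression of $\tilde{\ssigma}$, and the $\sigma$-finiteness needed for the disintegration.
\end{itemize}
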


\begin{Proof} Statements (a) and (b) are an immediate consequence of Theorem~\ref{thm:main}. In order to prove (c), if
we call $\tilde\eeta=\int\tilde\eeta_y\,d\Leb{d}(y)$ the unique GRLF relative to $\tilde\bb$, it is clear that $\eeta'=j_\#\tilde\eeta$ yields a solution of \eqref{eq:CE} with vector field $\bb$,
where 
$$
j:C([0,T];\R^d)\to C([0,T];\R^d)\quad\text{is the time inversion, $j(\gamma)(t):=\gamma(T-t)$},
$$
as the bounded compression property at any time $t\in [0,T]$ and the property of being concentrated on solutions are preserved by this transformation.
In particular ${\eeta}'=\tilde\rho_T\mathcal{L}^{d}(y)\otimes{\eeta}_y$, where $\ev_{T\#}\tilde\eeta=\tilde\rho_T\mathcal{L}^{d}$.
This implies that for $\Leb{d}$-a.e. $y$ the measure $\tilde\eeta_y$ is concentrated on
the curves $\XX(\cdot,x)$ such that $y=\XX(T,x)$, as $\eeta$ is concentrated on this family of curves.  Since, in addition, 
$\tilde\eeta_y=\delta_{\YY(\cdot,y)}$ has to be a Dirac mass for $\Leb{d}$-a.e. $y\in\R^d$, 
we obtain the existence of a Borel set $Y_0\subseteq\R^d$ of full $\Leb{d}$-measure such that for all $y\in Y_0$ there
exists only one $x(y)$ such that $y=\XX(T,x(y))$ and more generally $\YY(T-t,y)=\XX(t,x(y))$ for all $t\in [0,T]$.  

As $x(y)=\YY(T,y)$, $y\in Y_0$, thanks to Lusin's theorem (applied for instance with the finite reference measure $\gamma_d$, mutually absolutely
continuous with $\Leb{d}$)
we can assume with no loss of generality that $Y_0$ is the union of compact sets $K_i$ such that $\YY(T,\cdot)$ is 
continuous on $K_i$; it follows that $X_0=\YY(T,Y_0)$ is a Borel set. Now we can prove that $\Leb{d}\ll \XX(T,\cdot)_\#\Leb{d}$,
more precisely $\XX(T,\cdot)_\#\Leb{d}\geq (C(\tilde{\eeta}))^{-1}\Leb{d}$. Indeed, for any compact set $E\subset\R^d$ one has
\begin{eqnarray*}
\Leb{d}\bigl(\{x\in\R^d:\ \XX(T,x)\in E\}\bigr)&\geq& \Leb{d}\bigl(\{x\in X_0:\ \XX(T,x)\in E\}\bigr)\\
&=&\Leb{d}\bigl(\YY(T,\cdot)(Y_0\cap E)\bigr)\\
&\geq&\frac 1{C(\tilde{\eeta})}\Leb{d}\bigl(\{y:\ \YY(T,y)\in\YY(T,\cdot)(Y_0\cap E)\}\bigr)\\&\geq&\frac 1{C(\tilde{\eeta})}\Leb{d}(Y_0\cap E)\bigr)
=\frac 1{C(\tilde{\eeta})}\Leb{d}(E).
\end{eqnarray*}
Eventually, by repeating the argument symmetrically, we may take as $\mathcal N$ the union $\mathcal N_1\cup\mathcal N_2$, where $\mathcal N_1$ and $\mathcal N_2$ are
the Lebesgue negligible sets associated to $\XX$ and $\YY$ respectively.
\end{Proof}

In particular, the assumption in (c) is satisfied under the assumptions in \cite{DL_89}, dealing with Sobolev vector fields, and
\cite{Am_2004}, dealing with $BV$ vector fields, in \cite{DL_89} assuming $L^1_tL^\infty_x$ bounds on 
${\rm div\,}\bb_t$ and in \cite{Am_2004} relaxing this assumption to lower $L^1_tL^\infty_x$ bounds on 
${\rm div\,}\bb_t$, but still assuming it to be absolutely continuous with respect to $\Leb{d}$. 

A further relaxation of the bounds on divergence is the {\it near incompressibility}, 
an assumption singled out in \cite{Am_Del_Maly_2007} which amounts to a sort of integral $L^\infty$ condition on the divergence.
Its one-sided version, where only upper bounds on $\rho$ are required, will be considered in the final part of Section~\ref{sec:5.2}, see Definition~\ref{def1sidednearlyincompressible}.

\begin{definition}[Nearly incompressible vector field]\label{def:nivf}
We say that $\bb$ is nearly incompressible if there exists a Borel positive function $\rho$ with 
$\rho+\displaystyle{\frac 1\rho}\in L^\infty\bigl((0,T)\times\R^d\bigr)$ such that $\rho_t=\rho(t,\cdot)$ solve
\eqref{eq:CE} in the sense of distributions.
\end{definition}

In \cite{Bia_Bo_2020}  the well posedness result for nonnegative solutions has been extended assuming near incompressibility
of the vector field (after partial results in \cite{Am_Del_Maly_2007}), see the final part of Section~\ref{sec:5.2} for a more detailed
discussion.
See also \cite{ABD_04}, \cite{Bre_2003} for a natural context where a vector field with divergence not absolutely continuous with respect to
$\Leb{d}$, but with bounded compression appears.

\section{Forward well posedness in $SBD$}\label{sec:sbd}

In this section we present a forward well-posedness result in a class of space-time $BD$ vector fields whose spatial components
are $SBD$. We also allow $D_x\cdot\bb_t$ to be a measure, requiring only the lower bound \eqref{eq:onesideddiv}.

\begin{theorem}\label{thm:main2}
Let $\bb\in L^\infty_{{\rm loc}}\bigl((0,T)\times\R^d;\R^d\bigr)$ be satisfying \eqref{eq:growth}, with 
\begin{equation}\label{eq:BDassumption}
\|\bb_t\|_{SBD(B_R(0))}\in L^1(0,T)\qquad\text{for all $R>0$,}
\end{equation}
\begin{equation}\label{eq:onesideddiv}
D_x\cdot\bb_t\geq -\xi(t)\Leb{d} \quad\text{with $\xi\in L^1(0,T)$,} 
\end{equation}
\begin{equation}\label{eq:time_regularity}
\text{$\partial_t\bb_t$ is a $\R^d$-valued locally finite measure in $[0,T)\times\R^d$.}
\end{equation}
Then \eqref{eq:CE} has existence and uniqueness in the class ${\mathcal R}_+$. In particular,
under these assumptions, the GRLF associated to $\bb$ is unique and therefore Theorem~\ref{thm:main} and Corollary~\ref{cor:main}(a), (b)
apply to $\bb$.
\end{theorem}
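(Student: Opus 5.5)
Existence is the easy half and I would settle it first. I mollify $\bb$ in space, $\bb^\eps_t=\bb_t\ast\rho_\eps$. The lower bound \eqref{eq:onesideddiv} passes to the mollified fields, and it gives, for the smooth flows, a uniform bound $\|\rho^\eps_t\|_\infty\le \|\rho_0\|_\infty\exp(\int_0^t\xi)$ on the transported densities. Weak$^*$ compactness in $L^\infty$, together with the growth condition \eqref{eq:growth} (which handles tightness and mass conservation), then yields a limit solution in ${\mathcal R}_+$. Since existence holds for all nonnegative data with a uniform constant, the existence remark produces a GRLF. Once uniqueness in ${\mathcal R}_+$ is proved, Theorem~\ref{thm:main} makes the GRLF unique, and Corollary~\ref{cor:main}(a),(b) follow. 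So the whole issue is uniqueness in ${\mathcal R}_+$.

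For uniqueness I argue Eulerianly, by renormalization with the space-time field $\BB=(1,\bb)$. Assumptions \eqref{eq:BDassumption} and \eqref{eq:time_regularity} make $\BB$ a space-time $BD_{\rm loc}$ field of the form \eqref{eq:newform}. By linearity and the comparison trick, it suffices to show the following: if $\rho\in{\mathcal R}_+$ solves \eqref{eq:CE} with $\rho_0=0$, then $\rho\equiv0$. More generally, I would take two solutions $\rho^1,\rho^2\ge0$ and work with $w=\rho^1-\rho^2$, aiming to prove $D\cdot(|w|\BB)\le \xi|w|\Leb{1+d}$ in the sense of measures, after extending to negative times as in Remark~\ref{rem_negative_times}. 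Testing with $\phi_R$ and applying Gronwall then gives $w=0$.

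\emph{Absolutely continuous part.} Theorem~\ref{tconcentra} and \eqref{goodac} with $h(z)\approx|z|$ give
\[
D^a\cdot(h(w)\BB)=(h(w)-wh'(w))D^a\cdot\BB,
\]
since $D\cdot(w\BB)=0$ in $(0,T)\times\R^d$. This is controlled by $\xi|w|$ using \eqref{eq:onesideddiv} together with $h(z)-zh'(z)\le0$ for convex $h$ with $h(0)=0$.

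\emph{Singular part.} The commutator defect is concentrated on $|D^s\cdot\BB|$ and on $\Leb1\otimes|E^s\bb_t|$. By the $SBD$ assumption in space, the latter is concentrated on the rectifiable set $J=\bigcup_t\{t\}\times J_{\bb_t}$. This is where I need \eqref{eq:time_regularity}: it makes $\BB$ genuinely $BD$ in space-time, so that $J$ is countably $\Haus{d}$-rectifiable in $\R^{1+d}$, and the space-time $E^s\BB$ sits on a rectifiable set plus possibly a Cantor part. The Cantor part should be excluded using the spatial $SBD$ structure together with the slicing of Proposition~\ref{structural} in directions transversal to time.

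On the rectifiable set $\Sigma$ carrying the singular parts, Proposition~\ref{pbasic} gives
\[
D\cdot(h(w)\BB)\res\Sigma=\bigl(\trplus{h(w)\BB}{\nu}+\trplus{h(w)\BB}{-\nu}\bigr)\Haus{d}\res\Sigma.
\]
The chain rule, Theorem~\ref{change_traces}, rewrites each term as $h(w^\pm)\beta^\pm$, where
\[
\beta^\pm=\trplus{\BB}{\pm\nu},\qquad w^\pm=\trplus{w\BB}{\pm\nu}/\beta^\pm.
\]
Since $D\cdot(w\BB)\res\Sigma=0$, we have $w^+\beta^++w^-\beta^-=0$. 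The divergence lower bound forces $\beta^++\beta^-\ge0$ on $\Sigma$, because the singular part of $D\cdot\BB$ is nonnegative. The key point is to show that $|w^+|\beta^++|w^-|\beta^-\le0$. This fails for signed $w$ (Example~\ref{example:nogood}), so I would use nonnegativity. Each $\rho^i$ has traces $\rho^{i,\pm}\ge0$ satisfying $\rho^{i,+}\beta^++\rho^{i,-}\beta^-=0$. On the part of $\Sigma$ where $\beta^+,\beta^-$ have the same sign (expansive jumps, as at $x=0$ in the example), this forces $\rho^{i,\pm}=0$, hence $w^\pm=0$. Where the signs are opposite, mass flows through $\Sigma$, and $|w^+||\beta^+|=|w^-||\beta^-|$ gives zero contribution.

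I expect this last step to be the main obstacle. It has two parts: making rigorous that the singular divergence of $\BB$ lives on $\Sigma$, which requires exploiting \eqref{eq:time_regularity} to rule out space-time Cantor parts, and passing to the limit as $h\to|\cdot|$ in the trace identities.
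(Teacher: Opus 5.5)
Your uniqueness argument is essentially the paper's, and it works. Spatial mollification (Theorem~\ref{tconcentra}) and \eqref{goodac} remove the absolutely continuous part; in fact $D^a\cdot(|w|\BB)=0$, since $h(z)-zh'(z)\to0$ as $h\to|\cdot|$, so Gronwall is not needed. On the rectifiable set carrying the singular parts, the chain rule for traces, $\beta^++\beta^-\geq0$, vanishing of $\trplus{\rho^i\BB}{\pm\nu_\Sigma}$ on $\{\beta^+>0,\ \beta^->0\}$, and cancellation where $\beta^\pm$ have opposite signs together give $D^s\cdot(|w|\BB)=0$. Your pointwise treatment of expansive jumps is a local version of the paper's step, which obtains $\int_{\{\beta^\pm\geq0\}}|\trplus{\rho^i\BB}{\pm\nu_\Sigma}|\,d\Haus{d}=0$ from \eqref{eq:goodsigma3} and conservation of mass. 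It is fine once you justify $\rho^{i,\pm}\geq0$, which you only assert. This follows from Theorem~\ref{change_traces}: $h(z)=z$ and $h(z)=z+{\rm dist}^2(z,[0,M])$, with $M=\|\rho^i\|_\infty$, agree on the range of $\rho^i$, so they give the same trace. This forces the ratio $\trplus{\rho^i\BB}{\pm\nu_\Sigma}/\beta^\pm$ into $[0,M]$ wherever $\beta^\pm\neq0$.

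Two statements must be corrected. First, your opening reduction is false. Indeed $\rho^1-\rho^2\notin{\mathcal R}_+$, and a nonnegative solution with zero datum vanishes trivially by conservation of mass, so the reduced statement carries no information; Example~\ref{example:nogood} is exactly a nonzero signed solution with zero datum. Since you immediately switch to $w=\rho^1-\rho^2$, simply drop it.

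Second, the step you flag as the main obstacle is mis-aimed: you cannot in general exclude a Cantor part of the space-time $E^s\BB$, and you do not need to. For instance, $\bb_t(x)=c(t)e_1$, with $c$ the Cantor function on $[0,T]$, satisfies all the hypotheses, yet the $(t,x_1)$ entry of $E\BB$ is $\frac12 Dc\otimes\Leb{d}$. Since mollification acts only in space, the defect in Theorem~\ref{tconcentra} involves only $\Leb{1}\otimes|E^s\bb_t|$ and $|D^s\cdot\BB|=\Leb{1}\otimes|D_x^s\cdot\bb_t|\leq C\,\Leb{1}\otimes|E^s\bb_t|$, neither of which sees $\partial_t\bb$. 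The role of \eqref{eq:time_regularity} is only to make $\BB\in BD_{\rm loc}$ in space-time. This makes Theorem~\ref{change_traces} applicable and lets you slice $\BB$ along $\xi=(0,\eta)$, where $\BB_{z,\xi}=(\bb_t)_{y,\eta}$. Then \eqref{eq:sliceJ2} and the spatial $SBD$ assumption show that $\Leb{1}\otimes|E^s\bb_t|$ is concentrated on $J_\BB$ (Proposition~\ref{precti}), which is the statement you actually need.
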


Theorem~\ref{thm:main2} differs from \cite{Am_Del_Maly_2007} (where already the case of space-time $SBD$ vector fields $\BB$ with
bounded compression was covered). The regularity assumption on $\bb$ is milder and the assumption on the divergence of $\bb$ is relaxed
to \eqref{eq:onesideddiv}, therefore uniqueness is obtained only in the class ${\mathcal R}_+$. The $BD_{\rm loc}\bigl((0,T)\times\R^d\bigr)$ regularity of the vector field $\BB=(1,\bb)$ in \eqref{eq:spacetime} is a simple consequence of the combination of
\eqref{eq:BDassumption} and \eqref{eq:time_regularity}: it
is needed in the next proposition to ensure that $\Leb{1}\otimes E^s\bb_t$ is concentrated on a countably $\Haus{d}$-rectifiable set, even though
\eqref{eq:BDassumption} and \eqref{eq:time_regularity} do not imply that $\BB\in SBD_{\rm loc}$. Comparing with the results of the next section,
where no regularity of $\bb_t$ with respect to $t$ will be assumed, it would be interesting to remove \eqref{eq:time_regularity}.

We provide a proof via the chain rule for traces (\cite{Am_Cri_Man_2005,Am_Del_Maly_2007}). 
The non trivial part of Theorem~\ref{thm:main2} is uniqueness. Indeed, existence in the class ${\mathcal R}_+$ (even of signed solutions,
starting from any $\rho_0\in L^1\cap L^\infty$)  comes from a simple smoothing argument based on
\eqref{eq:onesideddiv} and from the a priori bounds (see for
instance \cite{AmCri_2014})
\begin{equation}\label{eq:apriori1}
\|\rho_t\|_\infty\leq\|\rho_0\|_\infty\exp\biggl(\int_0^t \|({\rm div\,}\bb_s)^-\|_\infty\,ds\biggr),\qquad
\|\rho_t\|_1\leq \|\rho_0\|_1,
\end{equation}
valid for smooth vector fields. We omit the details.

On the other hand, as Example~\ref{example:nogood} shows, for uniqueness the nonnegativity assumption may be necessary if we allow
the divergence to be singular. However, well posedness within nonnegative solutions is sufficient
to apply to this class of vector fields $\bb$ the general results of Section~\ref{sec:nb} dealing with the RLF.

\begin{proposition}\label{precti}
Under the assumptions of Theorem~\ref{thm:main2} one has
\begin{equation}\label{eq:onesideddiv1}
\trplus{\BB}{\nu_\Sigma}+\trplus{\BB}{-\nu_\Sigma}\geq 0 \qquad\text{$\Haus{d}$-a.e. on $\Sigma$}
\end{equation}
for any countably $\Haus{d}$-rectifiable set $\Sigma$. \\
In addition, $\Leb{1}\mid_{[0,T]}\otimes |E^s\bb_t|$ is concentrated on the countably $\Haus{d}$-rectifiable set $J_{\BB}$.
\end{proposition}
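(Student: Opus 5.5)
For the first claim, \eqref{eq:onesideddiv1}, I would use Proposition~\ref{pbasic} applied to $C=\BB$ in $\Omega=(0,T)\times\R^d$, with $k=d+1$. It gives
$$
(D\cdot\BB)\res\Sigma=\bigl(\trplus{\BB}{\nu_\Sigma}+\trplus{\BB}{-\nu_\Sigma}\bigr)\Haus{d}\res\Sigma .
$$
So it suffices to show that $(D\cdot\BB)\res\Sigma\geq 0$. Since $\BB=(1,\bb)$, the space-time divergence is $D\cdot\BB=\Leb{1}\otimes D_x\cdot\bb_t$. Indeed, for $\phi\in C^\infty_c$ the term $\int\partial_t\phi$ vanishes, and Fubini on the spatial term gives the disintegration in $t$; this uses \eqref{eq:BDassumption} to ensure $t\mapsto D_x\cdot\bb_t$ is a measurable family with locally integrable mass. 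By \eqref{eq:onesideddiv} the negative part satisfies
$$
(D\cdot\BB)^-\leq \xi(t)\,\Leb{1}\otimes\Leb{d}=\xi\,\Leb{d+1},
$$
which is absolutely continuous with respect to $\Leb{d+1}$. A countably $\Haus{d}$-rectifiable $\Sigma$ in $\R^{d+1}$ is $\Leb{d+1}$-negligible, so $(D\cdot\BB)^-(\Sigma)=0$. Hence $(D\cdot\BB)\res\Sigma\geq 0$, and the traces' sum is nonnegative $\Haus{d}$-a.e. on $\Sigma$.

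For the second claim, I would first record that $\BB\in BD_{\rm loc}((0,T)\times\R^d)$. The symmetric gradient $E\BB$ has entries:
\begin{itemize}
\item[(i)] the spatial block $\Leb{1}\otimes E\bb_t$, locally finite by \eqref{eq:BDassumption};
\item[(ii)] the mixed entries $\tfrac12(\partial_t\bb_i+\partial_{x_i}1)=\tfrac12\partial_t\bb_i$, locally finite by \eqref{eq:time_regularity};
\item[(iii)] the entry $\partial_t 1=0$.
\end{itemize}
By the structure theory of $BD$ (\cite{ACD_97}), $|E\BB|$ vanishes on $\Haus{d}$-negligible sets, $J_\BB$ is countably $\Haus{d}$-rectifiable, and $E^s\BB$ splits as the jump part $E^s\BB\res J_\BB$ plus a Cantor part that vanishes on sets of finite $\Haus{d}$ measure.

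Next I would slice in time. For $\Leb{1}$-a.e. $t$, the measure $E^s\bb_t$ is concentrated on $J_{\bb_t}$, since $\bb_t\in SBD$. The aim is then two facts:
\begin{itemize}
\item[(1)] $\Leb{1}\otimes|E^s\bb_t|$ is absolutely continuous with respect to $|E^s\BB|$;
\item[(2)] for a.e. $t$, $J_{\bb_t}\subseteq (J_\BB)_t$ up to $\Haus{d-1}$-null sets.
\end{itemize}
For (1), the spatial block of $E\BB$ restricted to directions $\xi=(0,\xi')$ is exactly $\Leb{1}\otimes E\bb_t$, and its singular part with respect to $\Leb{d+1}$ is $\Leb{1}\otimes E^s\bb_t$, because absolutely continuous parts slice to absolutely continuous parts. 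For (2), I would use the slicing of Proposition~\ref{structural}, in the form \eqref{eq:sliceJ1}–\eqref{eq:sliceJ2}, along the time direction. Equivalently, I would use the BD analogue of the fact that hyperplane slices $\{t\}\times\R^d$ of $J_\BB$ capture the jumps of the sections. Given (1) and (2), $\Leb{1}\otimes|E^s\bb_t|$ is concentrated on $\bigcup_t\{t\}\times J_{\bb_t}\subseteq J_\BB$ up to a null set, as required.

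The hardest step is (2), or alternatively excluding a Cantor contribution. Proposition~\ref{structural} slices along one-dimensional lines, not codimension-one hyperplanes, so it does not directly give (2). My first approach would avoid (2) and work with (1) only. Let $\mu=\Leb{1}\otimes|E^s\bb_t|$, and let $S=\bigcup_t\{t\}\times J_{\bb_t}$ be its concentration set. By (1), $\mu\ll|E^s\BB|$. Each $t$-section of $S$ is $\sigma$-finite for $\Haus{d-1}$, so $S$ is $\sigma$-finite for $\Haus{d}$; this is the point where Borel measurability of $S$ needs care. The Cantor part of $E\BB$ vanishes on $\sigma$-finite $\Haus{d}$ sets, so $\mu$ is absolutely continuous with respect to the jump part of $E^s\BB$, which is concentrated on $J_\BB$.
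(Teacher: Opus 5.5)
Your proof of \eqref{eq:onesideddiv1} is correct, and it makes explicit what the paper leaves implicit. Indeed $D\cdot\BB=\Leb{1}\otimes D_x\cdot\bb_t\geq-\xi\Leb{d+1}$, so the negative part does not charge the $\Leb{d+1}$-negligible set $\Sigma$, and Proposition~\ref{pbasic} concludes. Step (1) and the $BD$ structure facts you quote are also fine.

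The gap is the sentence ``each $t$-section of $S$ is $\sigma$-finite for $\Haus{d-1}$, so $S$ is $\sigma$-finite for $\Haus{d}$''. This implication is false, and measurability has nothing to do with it: Eilenberg's inequality only gives the reverse bound $\int^*\Haus{d-1}(S_t)\,dt\leq c\,\Haus{d}(S)$. For example, take $\bb_t(x)=e_1\,\uno_{\{x_1>f(t)\}}$ with $f$ a typical Brownian path. Then $J_{\bb_t}$ is a hyperplane for every $t$, while $S=\{(t,x):\ x_1=f(t)\}$ has Hausdorff dimension $d+\frac12$.

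This is not a side issue. Given (1) and the vanishing of $E^c\BB$ on $\Haus{d}$-$\sigma$-finite sets, knowing that $\mu$ is concentrated on an $\Haus{d}$-$\sigma$-finite set is equivalent to the claim. So your ``first approach'' does not avoid step (2); it moves all of its content into an unproved assertion. What excludes examples like the one above is \eqref{eq:time_regularity}, i.e.\ information coupling different times, and a purely sectionwise argument cannot see it.

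The missing idea is that one-dimensional slicing is enough, provided you slice along spatial directions. You do not need (2) in hyperplane form, only its one-dimensional version.

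\begin{enumerate}
\item For $\xi=(0,\eta)$ with $\eta\in{\bf S}^{d-1}$, the line through $z=(t,y)\in\pi_\xi=\R\times\pi_\eta$ lies in $\{t\}\times\R^d$, and $\BB_{z,\xi}(s)=\bb_t(y+s\eta)\cdot\eta=(\bb_t)_{y,\eta}(s)$.
\item By Proposition~\ref{structural} in $\R^d$, $|\langle E^s\bb_t\eta,\eta\rangle|=\int_{\pi_\eta}|D^s(\bb_t)_{y,\eta}|\,d\Haus{d-1}(y)$. Since $\bb_t\in SBD$ for a.e.\ $t$, a.e.\ slice is $SBV$, so $|D^s(\bb_t)_{y,\eta}|$ is concentrated on $J_{(\bb_t)_{y,\eta}}$.
\item Apply \eqref{eq:sliceJ2} to $\BB$ in $\R^{d+1}$ with this $\xi$, using $\Haus{d}\res\pi_\xi=\Leb{1}\otimes\Haus{d-1}\res\pi_\eta$. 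This shows that these jump points lie in $(J_\BB)_{z,\xi}$ for a.e.\ $t$ and $\Haus{d-1}$-a.e.\ $y$.
\item Integrating in $(t,y)$, $\Leb{1}\otimes|\langle E^s\bb_t\eta,\eta\rangle|$ is concentrated on $J_\BB$ for every $\eta$. Polarization over finitely many directions then gives the same for $\Leb{1}\otimes|E^s\bb_t|$.
\end{enumerate}

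This is the paper's argument. It uses neither hyperplane slicing nor the behaviour of $E^c\BB$ on $\sigma$-finite sets.
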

\begin{Proof}  We canonically extend $\BB$ to $(-1,T)\times\R^d$. 
From  \eqref{eq:BDassumption} and \eqref{eq:time_regularity} we immediately obtain that $\BB\in BD_{\rm loc}\bigl((-1,T)\times\R^d\bigr)$
and, in particular, $\BB\in {\mathcal M}_\infty\bigl((-1,T)\times\R^d\bigr)$. Then, \eqref{eq:onesideddiv1} follows by Proposition~\ref{pbasic}.

Recall that, by the $SBD$ assumption on $\bb_t$, $E^s\bb_t$ is concentrated on $J_{\bb_t}$ for $\Leb{1}$-a.e. $t\in (0,T)$. It is obviously
sufficient to show that $ \Leb{1}\mid_{[0,T]}\otimes |\langle E^s\bb_t\eta,\eta\rangle|\, $ is concentrated on the countably $\Haus{d}$-rectifiable set $J_{\BB}$ for
any unit vector $\eta\in\R^d$. By the representation of $|\langle E^s\bb_t\eta,\eta\rangle|$ provided by 
Proposition~\ref{structural} it is sufficient to show that for $\Leb{1}$-a.e. $t\in (0,T)$ one has
$$
\left\{x= (t,y+s\eta):\ y\in\pi_\eta,\, s\in J (\bb_t)_{y,\eta}\right\} \subseteq J_{\BB}
\quad\text{for $\Haus{d-1}$-a.e. $y\in\pi_\eta$.}
$$
Taking into account that $\BB=(1,\bb)$, this is a direct consequence of the application of \eqref{eq:sliceJ2} of Proposition~\ref{structural} to $\BB$
with $\xi=(0,\eta)$, as 
$$
(\BB)_{z,\xi}(s)=\langle\bb_t(y+s\eta),\eta\rangle=(\bb_t)_{y,\eta}(s)
\qquad\text{for any $z=(t,y)\in\pi_\xi\subseteq\R^{d+1}$}.
$$
\end{Proof}

\begin{Proof}[of Theorem~\ref{thm:main2}]
In this proof we canonically extend to times $t\in (0,T)^c$ both $\bb_t$ and the solutions $\rho_t$ to \eqref{eq:CE_initial} as
described in Remark~\ref{rem_negative_times}.
Under the assumption \eqref{eq:BDassumption}, if $\rho\in\mathcal R$ solves \eqref{eq:CE_initial} 
it is clear that $\rho \BB\in {\mathcal M}_\infty\bigl(\R\times\R^d\bigr)$ with 
\begin{equation}\label{eq:rep_div_BB_1}
D\cdot (\rho\BB)=\bar\rho\Haus{d}\res\{t=0\}-\rho_T\Haus{d}\res\{t=T\}
\qquad\text{in $\R\times\R^d$.}
\end{equation}
In particular Proposition~\ref{pbasic}  gives 
\begin{equation}\label{eq:agreement}
\trplus{\rho \BB}{\nu_\Sigma}+\trplus{\rho \BB}{-\nu_\Sigma}=0\qquad
\text{$\Haus{d}$-a.e. on $\Sigma$}
\end{equation}
for any countably $\Haus{d}$-rectifiable set $\Sigma\subseteq (0,T)\times\R^d$. In addition, \eqref{goodac} gives:
\begin{equation*}
D^a\cdot (h(\rho)\BB)=(h(\rho)-\rho h'(\rho))D^a\cdot \BB\qquad\forall h\in C^1(\R).
\end{equation*}
Choosing $h_\epsilon(z)=\sqrt{\epsilon^2+(z^+)^2}-\epsilon\in C^1(\R)$ and using the uniform control on the singular parts 
of $D \cdot (h_\epsilon(\rho)\BB)$ provided by
\eqref{eq:verygoodac}, by approximation we obtain
\begin{equation}\label{eq:noac}
D^a\cdot (\rho^+\BB)=0.
\end{equation}

On the other hand, for any countably $\Haus{d}$-rectifiable set $\Sigma\subseteq (0,T)\times\R^d$, the chain rule for traces and Proposition~\ref{pbasic} give that for any $h\in C^1(\R)$ and $R>0$, if we set $\Sigma_R:=\Sigma \cap ((0,T) \times B_R(0))$,

\begin{eqnarray}\label{eq:goodsigma1}
&&D^s\cdot (h(\rho)\BB)(\Sigma_R)=\int_{\Sigma_R }(\trplus{h(\rho)\BB}{\nu_\Sigma}+\trplus{h(\rho)\BB}{-\nu_\Sigma})\,d\Haus{d}=\\
&&\int_{\Sigma_R} \trplus{\BB}{\nu_\Sigma}  h\left(\frac{\trplus{\rho\BB}{\nu_\Sigma}}{\trplus{\BB}{\nu_\Sigma}}\right)+
\trplus{\BB}{-\nu_\Sigma}  h\left(\frac{\trplus{\rho\BB}{-\nu_\Sigma}}{\trplus{\BB}{-\nu_\Sigma}}\right)\,d\Haus{d}.\nonumber
\end{eqnarray}
Passing to the limit with the same approximating sequence $h_\eps$ as before, we can set $h(z)=z^+$, notice that \eqref{eq:agreement} shows that points where $\trplus{\BB}{\nu_\Sigma}$ and
$\trplus{\BB}{-\nu_\Sigma}$ have different sign give no contribution to the integral. Because of \eqref{eq:onesideddiv1}, the case when both
are negative can be ruled out, so that 
\begin{equation}\label{eq:goodsigma2}
D^s\cdot (\rho^+\BB)(\Sigma_R)=\int_{\Sigma_R \cap \{\trplus{\BB}{\nu_\Sigma},\,\trplus{\BB}{-\nu_\Sigma}\geq 0\}} |\trplus{\rho\BB}{\pm\nu_\Sigma}|\,d\Haus{d}.
\end{equation}

Now, Proposition~\ref{precti} grants that $\Leb{1}\mid_{[0,T]}\otimes |E^s\bb_t|$ is concentrated on $J_\BB$ and \eqref{eq:verygoodac} grants that the same
is true for $D^s\cdot (\rho^+\BB)$. Hence, we may choose as $\Sigma$ the set $J_\BB\cap (0,T)\times\R^d$ to obtain from \eqref{eq:noac}, \eqref{eq:goodsigma2}
and Lemma~\ref{lem:cylindrical} (choosing $R=R_i$ and sending $i \to \infty$)
\begin{equation}\label{eq:goodsigma3}
\int_{\R^d}\rho_T^+\,dx-\int_{\R^d}\bar\rho^+\,dx=\int_{\{\trplus{\BB}{\nu_\Sigma},\, \trplus{\BB}{-\nu_\Sigma}\geq 0\}
\cap (0,T)\times\R^d}|\trplus{\rho\BB}{\pm\nu_\Sigma}|\,d\Haus{d}.
\end{equation}
In particular, if $\rho\in {\mathcal R}_+$, by mass conservation the right hand side must vanish. Hence, we may apply
\eqref{eq:goodsigma2} to the difference $\rho=\rho^1-\rho^2$, with $\rho^i\in {\mathcal R}_+$ solving \eqref{eq:CE_initial} with
the same initial datum $\bar\rho$, to obtain from Lemma~\ref{lem:cylindrical} again
\begin{eqnarray*}
\int (\rho^1_T-\rho^2_T)^+\,dx&=&\int_{\{\trplus{\BB}{\nu_\Sigma},\,\trplus{\BB}{-\nu_\Sigma}\geq 0\}\cap (0,T)\times\R^d}
\hskip -2cm |\trplus{(\rho^1-\rho^2)\BB}{\pm\nu_\Sigma}|\,d\Haus{d}\\
&\leq&
\int_{\{\trplus{\BB}{\nu_\Sigma},\,\trplus{\BB}{-\nu_\Sigma}\geq 0\}\cap (0,T)\times\R^d}
\hskip -2cm |\trplus{\rho^1\BB}{\pm\nu_\Sigma}|+|\trplus{\rho^2\BB}{\pm\nu_\Sigma}|\,d\Haus{d}=0.
\end{eqnarray*}
Exchanging the roles of $\rho^1$ and $\rho^2$ and repeating this argument with any intermediate time $t\in (0,T)$ gives the result.
\end{Proof}

\begin{lemma}\label{lem:cylindrical}
If $\rho\in\mathcal R_+$ satisfies $D\cdot (\rho\BB)=\mu$ with $\mu$ finite in bounded subsets of $(0,T)\times\R^d$, 
then there exist $R_i\to \infty$ such that
$$
\int_{\R^d}\rho_T\,dx-\int_{\R^d}\bar\rho\,dx=\lim_{i\to\infty} D\cdot (\rho\BB)\bigl((0,T)\times B_{R_i}\bigr).
$$
\end{lemma}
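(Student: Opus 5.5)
We test the distributional identity for $D\cdot(\rho\BB)$ against space-time cut-offs and pass to the limit, first in time and then in space. Throughout we use the canonical extension of Remark~\ref{rem_negative_times}, so $\rho\BB=(\rho,\rho\bb)$ on $(0,T)\times\R^d$ and $\mu=D\cdot(\rho\BB)$ there.

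Fix $R>0$ and smooth radial cut-offs $\phi_R\in C^\infty_c(\R^d)$ with $0\le\phi_R\le1$, $\phi_R\equiv1$ on $B_R$, support in $B_{2R}$, and $(1+|x|)|\nabla\phi_R|\le C$ uniformly in $R$. Fix also $\chi_k\in C^\infty_c((0,T))$ with $0\le\chi_k\le 1$, $\chi_k\uparrow\uno_{(0,T)}$, and $\chi_k'$ concentrated near $0$ and $T$ with unit mass of opposite signs. Testing $\mu$ with $\psi=-\chi_k\phi_R$ gives
$$
\int\chi_k(t)\,\phi_R\,d\mu=-\int_0^T\chi_k'(t)\int\phi_R\rho_t\,dx\,dt-\int_0^T\chi_k\int\bb_t\cdot\nabla\phi_R\,\rho_t\,dx\,dt .
$$
We send $k\to\infty$ and treat each term separately.

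\emph{Left-hand side.} Since $|\mu|$ is finite on $(0,T)\times B_{2R}$, dominated convergence yields $\int_{(0,T)\times\R^d}\phi_R\,d\mu$.

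\emph{First term on the right.} Weak continuity of $\rho_t$ (in duality with $L^1$, by the uniform $L^\infty$ bound) and the initial condition make it converge to $\int\phi_R\rho_T\,dx-\int\phi_R\bar\rho\,dx$.

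\emph{Last term on the right.} By the growth assumption \eqref{eq:growth}, exactly as in the mass conservation computation of Section~\ref{sec:prem}, it is bounded in absolute value by
$$
C\|\rho\|_\infty\int_0^T\int_{\{|x|>R\}}\frac{|\bb^1_t|}{1+|x|}\,dx\,dt+C\int_0^T\Bigl\|\frac{\bb^2_t}{1+|x|}\Bigr\|_\infty\int_{\{|x|>R\}}\rho_t\,dx\,dt .
$$
Both quantities tend to $0$ as $R\to\infty$, since $\rho\in L^\infty_tL^1_x$ and the tails of $\rho_t$ vanish uniformly in $t$ (from weak $L^1$-continuity and compactness of $[0,T]$).

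Letting $R\to\infty$, the term $\int\phi_R(\rho_T-\bar\rho)\,dx$ tends to $\int\rho_T-\int\bar\rho$. Therefore
$$
\int_{(0,T)\times\R^d}\phi_R\,d\mu\;\longrightarrow\;\int\rho_T\,dx-\int\bar\rho\,dx .
$$

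The main obstacle is to replace the smooth weight $\phi_R$ by the sharp indicator $\uno_{B_{R_i}}$ when $|\mu|$ may be infinite globally. The difference $\int\phi_R\,d\mu-\mu\bigl((0,T)\times B_R\bigr)$ involves $\mu$ on the annulus $(0,T)\times(B_{2R}\setminus B_R)$, which need not be small. To handle it, we take the radial profile $\phi_R(x)=f_R(|x|)$ and use the layer-cake (coarea) formula
$$
\int\phi_R\,d\mu=\int_0^{1}\mu\bigl((0,T)\times\{f_R(|x|)>s\}\bigr)\,ds=\int_R^{2R}|f_R'(r)|\,\mu\bigl((0,T)\times B_r\bigr)\,dr+\mu\bigl((0,T)\times \overline{B}_R\bigr)\cdot 0,
$$
which expresses $\int\phi_R\,d\mu$ as an average, with probability weights $|f_R'(r)|\,dr$ on $[R,2R]$, of the values $m(r):=\mu((0,T)\times B_r)$. (The sign convention $f_R$ decreasing from $1$ to $0$ makes the weights nonnegative with total mass $1$.) An average over $[R,2R]$ of $m(r)$ lies between $\inf_{[R,2R]}m$ and $\sup_{[R,2R]}m$. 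Hence for each $R$ there is some $r_R\in[R,2R]$, chosen outside the countably many atoms of $r\mapsto|\mu|(\partial B_r\times(0,T))$, such that $|m(r_R)-\int\phi_R\,d\mu|\le 1/R$. Taking $R_i=r_{2^i}$ then gives the claimed limit. Should the sup/inf argument be delicate because $m$ fails to be monotone, we would simply use the mean-value property of the average to pick such $r_R$ directly.
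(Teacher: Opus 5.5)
\textbf{There is a genuine gap in the final selection step.} Everything before that is fine. You correctly obtain $\int\phi_R\,d\mu\to L:=\int\rho_T\,dx-\int\bar\rho\,dx$, and $\int\phi_R\,d\mu=\int_R^{2R}|f_R'(r)|\,m(r)\,dr$ with $m(r)=\mu\bigl((0,T)\times B_r\bigr)$.

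\emph{Why the selection step fails.} That this weighted average lies between $\inf_{[R,2R]}m$ and $\sup_{[R,2R]}m$ does not give a radius $r_R$ with $|m(r_R)-\int\phi_R\,d\mu|\le 1/R$. That conclusion, like the ``mean-value property'' you fall back on, needs $m$ to have the intermediate value property. But $m$ is only of bounded variation in $r$, with $m(r^+)-m(r)=\mu\bigl((0,T)\times\partial B_r\bigr)$. Moreover $\mu$ may charge lateral cylinders: think of sources or sinks concentrated on spheres, e.g.\ a field pointing away from $\partial B_r$ on both sides, as $\mathrm{sign}(x)$ does at $0$ in Example~\ref{example:nogood}. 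If $m$ is, say, piecewise constant on $[R,2R]$ with one jump at $3R/2$, the average stays at a fixed distance from every value of $m$. Choosing $r_R$ off the atoms changes nothing.

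More fundamentally, one average per $R$ carries too little information. For a rescaled profile $f_R=f(\cdot/R)$, the function $m(r)=L+\delta\,\mathrm{sign}(\sin r)$ satisfies $\int_R^{2R}|f_R'|\,m\,dr\to L$ by Riemann--Lebesgue, although $|m-L|=\delta$ everywhere.

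A separate minor point: uniform-in-$t$ tightness of $\rho_t$ does not follow from weak continuity. You do not need it, though, since dominated convergence in $t$ already handles the last term.

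\emph{How the paper handles it.} What is missing is control of $m(r)$ at individual radii. The paper gets it from the Gauss--Green formula on the cylinder $(0,T)\times B_R$. The defect $\int_{B_R}(\rho_T-\bar\rho)\,dx-\mu\bigl((0,T)\times B_R\bigr)$ equals the integral of the normal trace $\trplus{\rho\BB}{\nu}$ over $(0,T)\times\partial B_R$. For a.e.\ $R$ this is bounded by $g(R):=\int_{(0,T)\times\partial B_R}\rho|\BB|\,d\Haus{d}$. By \eqref{eq:growth}, the average of $g$ over $(2^i,2^{i+1})$ is infinitesimal, so one picks $R_i$ there with $g(R_i)\to0$. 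The selection is made on the flux $g$, which the growth condition controls, not on $m$.

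\emph{How you could repair your own argument.} Your error bound is uniform over all radial (Lipschitz) profiles with $\phi=1$ on $B_R$, $\phi=0$ off $B_{2R}$ and $(1+|x|)|\nabla\phi|\le C$. Suppose $|m-L|\ge\delta$ on all of $[R,2R]$. Then one of the sets $\{m\ge L+\delta\}$, $\{m\le L-\delta\}$ in $[R,2R]$, call it $E$, has $\Leb{1}(E)\ge R/2$. The profile with $|f'|=\uno_E/\Leb{1}(E)$ is admissible for $C\ge 6$ and $R\ge1$. It gives $|\int\phi\,d\mu-L|\ge\delta$, which is impossible for $R$ large. What makes the step work is this choice of a weight adapted to the level sets of $m$, not a mean-value argument.
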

\begin{proof} Let $C=\sup_{s\in [0,T]}\|\rho_s\|_\infty$.
From \eqref{eq:rep_div_BB}, for all $R>0$ one has
\begin{equation}\label{eq:seraop}
\int_{B_R}\rho_T\,dx-\int_{B_R}\bar\rho\,dx-D\cdot (\rho\BB)\bigl((0,T)\times B_R\bigr)=
\int_{(0,T)\times\partial B_R}\trplus{\rho \BB}{\nu_{(0,T)\times\partial B_R}}\,d\Haus{d}.
\end{equation}
where the normal is chosen to be the inner one to the cylinder $(0,T)\times B_R$.
Now, the growth condition \eqref{eq:growth} ensures 

\begin{equation*}
\begin{split}
\lim_{i\to\infty}&2^{2-i}\int_{2^i}^{2^{i+1}}\int_{(0,T)\times\partial B_R}\rho|\BB|\,d\Haus{d}\,dR\leq\lim_{i\to\infty}\int_{2^i}^{2^{i+1}}\int_{(0,T)\times\partial B_R}\rho\frac{|\BB|}{1+|x|}\,d\Haus{d}\,dR\\
&\leq \lim_{i\to\infty} \int_{2^i}^{2^{i+1}}\int_{(0,T)\times\partial B_R}\rho\,d\Haus{d}\,dR+ \int_{2^i}^{2^{i+1}}\int_{(0,T)\times\partial B_R}\rho\frac{|\bb|}{1+|x|}\,d\Haus{d}\,dR\\
&\leq\lim_{i\to\infty}  \|\rho\|_{\infty}\int_{0}^{T}\int_{B_{2^{i+1}}\setminus B_{2^{i}}}\frac{|\bb^1|}{1+|x|}\,dx\,ds +\int_{0}^{T}\|\frac{|\bb_s^2|}{1+|x|}\|_{\infty}\int_{B_{2^{i+1}}\setminus B_{2^{i}}}\rho\,dx\,ds =0
\end{split}
\end{equation*}
where the last term goes to 0 by dominated convergence.
Hence, we can find radii $R_i\in (2^i,2^{i+1})$ with $\int_{(0,T)\times\partial B_{R_i}}{\rho|\BB|}\,d\Haus{d}\to 0$
and $|\trplus{\rho \BB}{\nu_{(0,T)\times\partial B_{R_i}}}|\leq \rho|\BB|$ $\Haus{d}$-a.e. in $(0,T)\times\partial B_{R_i}$
to conclude that the term in the right-hand side of \eqref{eq:seraop} with $R=R_i$ is infinitesimal as $i\to\infty$.
\end{proof}

\section{The general BV case}\label{sec:5.2}

We preliminarily revisit some results and notions from \cite{Bia_Bo_2020}, regarding Lagrangian representations,
the untanglement property and the partition via characteristics. Throughout this section, unless differently specified, we shall always assume that
$\bb:(0,T)\times\R^d\to\R^d$ satisfies \eqref{eq:growth} and consider densities $\rho\in \mathcal{R}^\mathcal{I}_+:=\{\rho_t\in L^\infty\bigl(\mathcal{I};L^1(\R^d)\bigr)\cap L^\infty\bigl(\mathcal{I}; L^\infty(\R^d)\bigr):
\,\rho_t \,\mbox{is nonnegative}\}$, where $\mathcal{I}$ is some bounded open interval. We shall canonically extend $\bb$ and functions in  $\mathcal{R}^\mathcal{I}_+$  to $0$ outside of their domains of definition.

Recall
also the equivalent formulations of the continuity equations in Remark~\ref{rem_negative_times}, possibly involving a right hand
side.

\subsection{Spaces of curves and Lagrangian representation}

In this section only we shall adopt a different notation for curves. More precisely,
$\gamma$ will denote a $\R^d$-valued uniformly continuous curve from a bounded, nonempty open interval $I_\gamma\subseteq\R$, and set $I_\gamma=(t_\gamma^-,t_\gamma^+)$. Since $\gamma$ is uniformly continuous, it has one-sided limits
$\gamma(t_{\gamma+}^-)$, $\gamma(t_{\gamma-}^+)$ at the extreme points of $I_\gamma$ and we denote by
$$
\overline{\mathrm{Graph\,}}\gamma=
\left\{(t,\gamma(t)):\ t\in I_\gamma\right\}\cup\left\{(t_\gamma^-,\gamma(t_{\gamma+}^-)),(t_\gamma^+,\gamma(t_{\gamma-}^+))\right\}
$$
the completion of $\mathrm{Graph}\,\gamma$, the graph of $\gamma$ on $I_\gamma$.

Denoting by $\Upsilon$ the collection of these curves, it can canonically be given a Polish structure,
by considering the Hausdorff distance $d_\Upsilon$ between the corresponding completed graphs; according to $d_\Upsilon$, convergent
sequences $(\gamma_h)$ are equicontinuous and
$$
\lim_{h\to\infty} t_{\gamma_h}^-=t_\gamma^-,\quad
\lim_{h\to\infty} t_{\gamma_h}^+=t_\gamma^+,\quad
\gamma_h\to\gamma\,\,\,\text{uniformly on compact subsets of $(t_\gamma^-,t_\gamma^+)$.}
$$
for the limit curve $\gamma$.

It will also be convenient to consider the class $\Upsilon_0$ of locally uniformly  continuous curves
$\sigma:(t_\sigma^-,t_\sigma^+)\to\R^d$. For these curves $\sigma$ we
call $\sigma^k\in\Upsilon$ the restriction of $\sigma$ to the subdomain
\begin{equation}\label{eq:k-restriction}
I_{\sigma,k}:=\left\{t\in\R:\ |t|<k,\,\,\,t\in \bigl(\inf I_\sigma+\frac{1}{k}\ell_\sigma,\sup I_\sigma-\frac{1}{k}\ell_\sigma\bigr)\right\}
\qquad k\geq 2
\end{equation}
with $\ell_\sigma=(t_\sigma^+-t_\sigma^-)>0$. As all curves $\sigma^k$ belong to $\Upsilon$, a Polish
structure in $\Upsilon_0$ is the coarsest topology that makes all maps $\sigma\mapsto\sigma^k$, $k\geq 2$,
continuous. Also, according to this definition, the map which associates to $\gamma\in\Upsilon$ the
corresponding curve $\sigma\in\Upsilon_0$ is continuous.

We denote also
$$
\Gamma_\bb=\left\{\gamma\in \Upsilon:\ \gamma \text{ is a characteristic curve of $\bb$}\right\},
$$
meaning as in the previous sections that $\gamma$ is absolutely continuous in $\overline{I}_\gamma$
and an integral solution of the ODE $\gamma'(t)=\bb_t(\gamma(t))$. It is easily seen that
$\Gamma_\bb$ is a Borel subset of $\Upsilon$. Analogously,
$$
\Gamma_{0,\bb}=\left\{\gamma\in \Upsilon_0:\ \gamma \text{ is a characteristic curve of $\bb$}\right\},
$$
 is a Borel subset of $\Upsilon_0$ containing locally absolutely continuous curves that are solutions of the ODE.

\begin{definition}[Lagrangian Representation]\label{defLagRep}
Assume that the vector field $\rho\BB=(\rho,\rho\bb)$ satisfies $D\cdot (\rho\BB)=\mu\in\Measures{\R^{d+1}}$.
A Lagrangian representation of $\rho\BB$ is a measure $\eeta\in\Measuresp{\Upsilon}$ 
concentrated on $\Gamma_\bb$ such that
\begin{equation}\label{rhoBBrep}
\rho\BB\Leb{d+1}=\int_\Upsilon(\mathrm{Id}, \gamma)_\#\left((1,\gamma') \Leb{1}\right)\,d\eeta(\gamma) 
\end{equation}
and
\begin{align}
\mu & =\int_\Upsilon\left[\delta_{(t_\gamma^-,\gamma(t_\gamma^-))}(d t, d x)-\delta_{(t_\gamma^+,\gamma(t_\gamma^+))}(d t, d x)\right] \, d\eeta(\gamma), \label{eq:concancellazioni}\\
|\mu| & =\int_\Upsilon\left[\delta_{(t_\gamma^-,\gamma(t_\gamma^-))}(d t, d x)+\delta_{(t_\gamma^+, \gamma(t_\gamma^+))}(d t, d x)\right] \, d\eeta(\gamma).\label{eq:nocancellazioni}
\end{align}
\end{definition}

The existence of Lagrangian representations in this context, connecting the Eulerian and Lagrangian points of view, 
was first proved in \cite{Am_2004}. Alternatively, existence can be shown by employing Smirnov's superposition principle for normal 1-currents (see \cite{Smirnov}, and even \cite{PS12}, \cite{PS13}, \cite{ARV} for variants in the setting of metric currents), as shown in \cite{Bia_Bo_2020} and \cite{PhdBonicatto} for $\bb$ with compact support. 

\begin{remark} [Stability of Lagrangian representations]\label{invaria_rep} {\rm It is not difficult to show that measures $\eeta$ inducing Lagrangian representations are stable
under restriction, namely whenever $B\in\Borel{\Upsilon}$, the measure $\tilde\eeta=\eeta\res B$ provides
a Lagrangian representation of a vector field $\tilde\rho\BB$, with $\tilde\rho\leq\rho$ defined by the right hand side
of \eqref{rhoBBrep} with $\tilde\eeta$ instead of $\eeta$. Accordingly, $D\cdot(\tilde\rho\BB)$
is representable by the right-hand side in \eqref{eq:concancellazioni}. On the other hand, the sum of Lagrangian representations $\eeta$ and $\eeta'$
of $\rho\BB$ and $\rho'\BB$ is not necessarily a Lagrangian representation of $(\rho+\rho')\BB$, as in \eqref{eq:nocancellazioni}
for $\mu=D\cdot ((\rho+\rho')\BB)$ a strict inequality might occur.
}\end{remark}

We now show, using \cite{ARV}, that the existence of Lagrangian representations still holds under the more general growth assumption \eqref{eq:growth} for densities $\rho\in\mathcal {R}^{\mathcal{I}}_+$ for some $\mathcal{I}$. We recall  that any $\rho\in\mathcal {R}^{\mathcal{I}}_+$ is extended to the full space by setting $\rho\mid_{t\in\R\setminus\mathcal{I}}\equiv0$. 

\begin{proposition}\label{extended_lagrangian}
Let $\rho\in\mathcal {R}^{\mathcal{I}}_+$ for some $\mathcal{I}$ bounded open interval, then any vector field $\rho\BB$ satisfying $D\cdot (\rho\BB)\in\Measures{\R^{d+1}}$ admits a Lagrangian representation.
\end{proposition}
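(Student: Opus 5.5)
The plan is to view the space-time vector field $\rho\BB=(\rho,\rho\bb)$ as a $1$-dimensional normal current in $\R^{d+1}$ and apply the superposition principle of \cite{ARV} for $1$-currents with locally finite mass. Let $T_\rho$ denote the current acting on $1$-forms $\omega$ by $T_\rho(\omega)=\int\langle\omega,\rho\BB\rangle\,d\Leb{d+1}$. Its boundary is $\partial T_\rho=-D\cdot(\rho\BB)$, which is a measure in $\Measures{\R^{d+1}}$ by assumption. Local finiteness of the mass of $T_\rho$ follows from $\rho\in L^\infty$, $\rho=0$ outside $\mathcal I\times\R^d$, and the local integrability of $|\bb|$ granted by \eqref{eq:growth}. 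The total mass need not be finite, since $\bb$ is not compactly supported; this is exactly why Smirnov's theorem, as used in \cite{Bia_Bo_2020}, is replaced by \cite{ARV}.

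The result of \cite{ARV} should give a decomposition of $T_\rho$ into curves without cancellations. There will be a measure $\eeta$ on oriented, locally rectifiable, arc-length parametrized curves, possibly defined on unbounded or half-infinite parameter intervals, with two properties. First, $T_\rho=\int[\![\theta]\!]\,d\eeta(\theta)$ with $\|T_\rho\|=\int\|[\![\theta]\!]\|\,d\eeta$. Second, the boundary satisfies both $\partial T_\rho=\int\partial[\![\theta]\!]\,d\eeta$ and $|\partial T_\rho|=\int|\partial[\![\theta]\!]|\,d\eeta$; boundary points are counted only at finite endpoints. The key steps are the following, carried out in order.

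\emph{Step 1 (reparametrization by time).} Because the first component of $\rho\BB$ is $\rho\ge0$ and $\|T_\rho\|=\rho|\BB|\Leb{d+1}$, the absence of cancellations forces $\eeta$-a.e. curve to have tangent parallel to $\BB=(1,\bb)$ a.e. along it. Hence the time coordinate is strictly increasing along the curve. Reparametrizing by $t$, the curve becomes a graph $t\mapsto(t,\gamma(t))$ with $\gamma'=\bb_t(\gamma)$ a.e. Since $\rho$ vanishes outside $\mathcal I\times\R^d$, the time interval $I_\gamma$ is contained in $\overline{\mathcal I}$, which is bounded.

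\emph{Step 2 (uniform continuity and finite endpoints).} I must show that $\gamma\in\Upsilon$, i.e. that $\gamma$ is uniformly continuous with limits at $t_\gamma^\pm$, for $\eeta$-a.e. curve. For this I will use $\int_{I_\gamma}|\bb_t(\gamma(t))|/(1+|\gamma(t)|)\,dt<\infty$. Integrating against $\eeta$ and using \eqref{rhoBBrep}, this integral is finite on average: the $\bb^1$ part is controlled by $\|\rho\|_\infty\|\bb^1/(1+|x|)\|_{L^1}$, and the $\bb^2$ part by $\int\|\bb^2_t/(1+|x|)\|_\infty\|\rho_t\|_1\,dt$. A Gronwall argument on $\log(1+|\gamma|)$ then bounds $|\gamma|$, which gives $\int|\gamma'|<\infty$. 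The a.e. finiteness requires localizing the averaged bound. I would restrict $\eeta$ to curves passing through a bounded set, or use a weight as in the mass-conservation argument, to get $\sigma$-finiteness.

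\emph{Step 3 (identification of $\mu$).} Once curves have finite length and bounded time domains, their endpoints are genuine points. The ARV boundary identity then reduces to \eqref{eq:concancellazioni}--\eqref{eq:nocancellazioni}, with the sign convention that the initial point carries $+\delta$ because $\mu=D\cdot(\rho\BB)=-\partial T_\rho$. Finally, I check that $\eeta$ lives on the Borel set $\Gamma_\bb$ and is $\sigma$-finite in $\Measuresp{\Upsilon}$. This uses measurability of the reparametrization map from the ARV curve space to $\Upsilon$, which is continuous on the relevant subset.

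The main obstacle I expect is Step 2. ARV a priori allows curves of infinite length escaping to spatial infinity in finite time, or having no limit at $t_\gamma^\pm$; such curves would contribute no boundary and would not belong to $\Upsilon$. Excluding them $\eeta$-a.e. via \eqref{eq:growth} is the delicate point, since the bound has to be localized along curves using only averaged information.
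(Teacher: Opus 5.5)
There is a genuine gap before your Step~1. You apply the curve superposition of \cite{ARV} directly to $T_\rho$, but that representation (open-ended curves of locally finite length, with the mass and boundary identities and no cancellations) is available only for \emph{acyclic} currents. For a general locally normal $T_\rho$, \cite[Theorem~1.4]{ARV} first gives a splitting $T_\rho=S_a+S_c$ into an acyclic and a cyclic part with $\|T_\rho\|=\|S_a\|+\|S_c\|$, and only $S_a$ is then decomposed into curves. A cycle is in general not such a superposition at all (think of an irrational winding on a torus). Your Step~2 argues curve by curve on an $\eeta$ that you assume exists, so it never reaches $S_c$.

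You must prove $S_c=0$. The paper does this at the Eulerian level:
\begin{itemize}
\item the mass splitting forces $S_c=\rho_c\BB$ with $0\le\rho_c\le\rho$, so $\rho_c$ is bounded and vanishes for $t\notin\mathcal I$;
\item $\partial S_c=0$ means $D\cdot(\rho_c\BB)=0$ in $\R^{d+1}$, so $\rho_c$ is a nonnegative bounded solution of \eqref{eq:CE} with zero initial datum;
\item testing with cutoffs $\phi_R\equiv 1$ on $B_R$ with $(1+|x|)|\nabla\phi_R|\le C$, and using \eqref{eq:growth}, $\rho_c\le\rho$ and dominated convergence, gives $\int_{B_r}\rho_{c,t}\,dx=0$ for all $r,t$, i.e.\ $\rho_c\equiv0$.
\end{itemize}
This step genuinely needs \eqref{eq:growth}. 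For $d=1$ and $\bb=1+x^2$, the density $\rho=(1+x^2)^{-1}\uno_{\{a<t-\arctan x<b\}}$ is bounded, nonzero, in $L^\infty_tL^1_x$, and satisfies $D\cdot(\rho\BB)=0$ in $\R^2$. It is carried by the trajectories $x=\tan(t-c)$, so mass enters and leaves through spatial infinity in finite time. This is exactly the phenomenon you flag as the main obstacle, and it is first dealt with here, not in Step~2.

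Once acyclicity is established, your Steps~1--3 match the paper's proof. The localization you plan in Step~2 is unnecessary, however. Testing $T_\rho$ against $(0,\bb)/(|\bb|(1+|x|))$ gives $\int\int_{I_\gamma}|\dot\gamma|/(1+|\gamma|)\,dt\,d\eeta(\gamma)<\infty$. A nonnegative function with finite $\eeta$-integral is finite $\eeta$-a.e.\ even when $\eeta$ is infinite. Since $\bigl|\frac{d}{dt}\log(1+|\gamma|)\bigr|\le|\dot\gamma|/(1+|\gamma|)$, this bounds $|\gamma|$, and hence the length of $\eeta$-a.e.\ curve, directly, with no Gronwall argument.
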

\begin{proof}
In this proof we shall follow the language and notation of \cite{ARV}.
The 1-current  $S=\rho\BB\Leb{d+1}$ has locally finite mass, furthermore $\partial S\in\Measures{\R^{d+1}}$, which means that $S$ is a locally normal current. By \cite[Theorem 1.4]{ARV} it is possible to decompose the 1-current $S$ as the sum of a cyclic current $S_c$
and an acyclic current $S_a$ in such a way that 
    \begin{equation}\label{eq:decompositionmasscyclicacyclicm}
        \rho|\BB|\Leb{d+1}=\|S\|=\|S_a\|+\|S_c\|,
    \end{equation}
where $\|\cdot\|$ denotes the mass measure of a current. From \eqref{eq:decompositionmasscyclicacyclicm} we obtain that there exist 
$\rho_a , \rho_c \in L^{\infty}(\R^{d+1})$ nonnegative such that $\rho=\rho_a + \rho_c$, $S_a =\rho_a \BB$ and $S_c =\rho_c \BB$.
Next we show that $S$ is acyclic or, equivalently, that  $\rho_c=0$. 
    
Indeed, since $S_c$ is a cycle, one has $D\cdot(\rho_c\BB)=0$ in $\R^{d+1}$,
hence $\rho_c$ is a nonnegative solution of the continuity equation with initial datum 0.
In particular by \eqref{eq:weak1} we obtain 
$$\int_{\R^d}\phi\rho_{c,t}\,dx=\int_0^t\int_{\R^d}\bb_\tau\cdot\nabla\phi\rho_{c,\tau}\,dx\, d\tau
\quad\text{for all $\phi\in C^1_c(\R^d)$, $ t\geq0$.}$$
Fix $r>0$ and consider for $R\geq r$, $\phi_R\in C^\infty_c(\R^d)$ such that $0\leq\phi_R\leq 1$, $\phi_R\mid_{B_R(0)}= 1$ and 
$(1+|x|)|\nabla\phi_R(x)|\leq C$ for all $x\in\R^d$ for some fixed constant $C>0$ independent of $R$. These conditions grant, for all $t>0$,
\begin{eqnarray*}
\int_{B_r (0)}\rho_{c,t}\,dx&\leq&\int_{\R^d}\phi_R\,\rho_{c,t}\,dx\leq
\int_0^t\biggl|\int_{\R^d}\bb_s\cdot\nabla\phi_R\rho_{c,s}\,dx\biggr|\,ds\\
&\leq& C\|\rho\|_\infty\int_0^T \int_{\{\phi_R<1\}} \frac{|\bb^1_s|}{1+|x|}\,dx\,ds+
C\int_0^T \|\frac{|\bb^2_s|}{1+|x|}\|_\infty\int_{\{\phi_R<1\}} \rho_s\,dx\,ds,
\end{eqnarray*}
with the last term going to 0 as $R\to\infty$ by dominated convergence, implying that $\rho_c =0$.
We can then apply \cite[Theorem 1.4]{ARV} and find 
a positive Borel measure $\eeta$ over the space $\Gamma(\R^{d+1})$ of the open ended curves $\sigma(s)$ with 
locally finite length in $\R^{d+1}$ (namely their intersection with any bounded set of $\R^{d+1}$ has finite length)
such that $\eeta$-almost every curve $\sigma$ is injective and
\begin{align}
&S=\int_{\Gamma(\R^{d+1})} S_\sigma d\eeta(\sigma), \label{decomp} \\
&\|S\|=\int_{\Gamma(\R^{d+1})} \|S_\sigma\| d\eeta(\sigma),\label{masseq}\\
&\partial S=\int_{\Gamma(\R^{d+1})} \partial S_\sigma d\eeta(\sigma), \label{eq:ARVboundary}\\
& \|\partial S\|=\int_{\Gamma(\R^{d+1})} \|\partial S_\sigma\| d\eeta(\sigma).\label{eq:ARVvariationboundaryeq} 
\end{align}
where $S_\sigma$ denotes the current with locally finite mass in $\R^{d+1}$ canonically induced by $\sigma\in \Gamma(\R^{d+1})$.
Testing $S$ against $\varphi\frac{\BB}{|\BB|}$ for any $\varphi\in C_c(\R^{d+1})$ nonnegative, it is possible to show that $\eeta$-a.e. curve $\sigma$ has the property that $\sigma'(s)$ is a nonnegative multiple of $\rho\BB(\sigma(s))=\rho(\sigma(s))(1,\bb(\sigma(s))$ and $\rho(\sigma(s))>0$ for $\Leb{1}$-a.e. $s$ such that $|\sigma'(s)|>0$ (see for instance \cite{PhdBonicatto}).
Hence, up to a reparametrization, $\eeta$-almost every curve $\sigma$ can be represented as $(t,\gamma(t))$, with $\gamma$ a solution of \eqref{eq:ODE} in an interval $(t_\gamma^- ,t_\gamma^+)\subseteq \mathcal{I}$. 

From now on, with a slight abuse, we will see $\eeta$ as a measure
on the curves $\gamma$, noticing that $S_\sigma=S_{(t,\gamma)}$. The fact that $\eeta$ is concentrated on curves of locally finite length
in $\R^{d+1}$ implies that, for $\eeta$-a.e. curve $\gamma$, the possibly infinite limits $|\gamma(t_\gamma^+)| := \lim_{t \uparrow t_\gamma^+} |\gamma(t)| \quad \text{and} \quad |\gamma(t_\gamma^-)| := \lim_{t \downarrow t_\gamma^-} |\gamma(t)|$
exist.

Finally, we check that $\eeta$ is indeed concentrated on curves of finite length.
Testing $S$ against $\frac{1}{|\bb|(1+|x|)}(0,\bb)$, with $\tau_\gamma=(t_\gamma^++t_\gamma^-)/2$ we find 
\begin{align}
&\int\left|\log\left(\frac{1+|\gamma(t_\gamma^{\pm})|}{1+|\gamma(\tau_\gamma)|}\right)\right|\,d\eeta(\gamma)\leq\int\int_{t^-_{\gamma}}^{t^+_{\gamma}} \frac{|\dot\gamma(s)|}{1+|\gamma(s)|}ds\,d\eeta(\gamma)=\int\int_{t^-_{\gamma}}^{t^+_{\gamma}} \frac{|\bb(\gamma(s))|}
{(1+|\gamma(s)|)}ds\,d\eeta(\gamma)\nonumber \\
&= S\left(\frac{1}{|\bb|(1+|x|)}(0,\bb)\right)\leq
\|\rho\|_\infty \left\|\frac{|\bb^1|}{1+|x|}\right\|_{L^1((0,T)\times\R^d)}+\int_0^T\left\|\frac{|\bb^2_s|}{1+|x|}\right\|_\infty\int\rho_s\,dx ds
<\infty.
\end{align}
It follows that the first integral is finite, hence $\eeta$ is concentrated on curves of finite length.
We conclude by noting that $\eeta$ is a Lagrangian representation.
Indeed since $\eeta$ is supported on curves of finite length, \eqref{rhoBBrep} follows from \eqref{decomp} whereas \eqref{eq:ARVboundary} and \eqref{eq:ARVvariationboundaryeq} written in terms of $\gamma$ yield \eqref{eq:concancellazioni} and \eqref{eq:nocancellazioni} respectively.  
\end{proof}

\subsection{Untanglement} We next recall the notion of untangled family of curves $\Delta\subseteq\Upsilon$. Let us define
$$
\mathring{W}:=W_1  \cup  W_2\subseteq\Upsilon\times\Upsilon,
$$
where 
$$
W_1 := \left\{
(\gamma, \overline{\gamma})\in\Upsilon\times\Upsilon:\
\mathrm{Graph\,}\,\gamma
\cap
\mathrm{Graph\,}\,\overline{\gamma}
=\emptyset
\right\}
$$
and
$$
W_2:=\left\{(\gamma,\overline\gamma)\in\Upsilon\times\Upsilon:\ \mathrm{Graph\,}\,\gamma\cap \mathrm{Graph\,}\,\overline{\gamma}
=\mathrm{Graph\,}\gamma\vert_{(\max(t^-_\gamma,t^-_{\overline\gamma}),\min(t^+_\gamma,t^+_{\overline\gamma}))} \right\}.
$$   

\begin{definition}[Untangled families of curves and Lagrangian representations]\label{defuntangled}
A set $\Delta\subseteq\Upsilon$ is said to be untangled if
 $\Delta\times\Delta\subseteq \mathring{W}$. 
Analogously, a Lagrangian representation $\eeta$ is said to be untangled if there exists an untangled 
and $\sigma$-compact set $\Delta\subseteq\Gamma_\bb$ 
such that $\eeta$ is concentrated on $\Delta$.
\end{definition}

\begin{figure}
\includegraphics[width=12 cm]{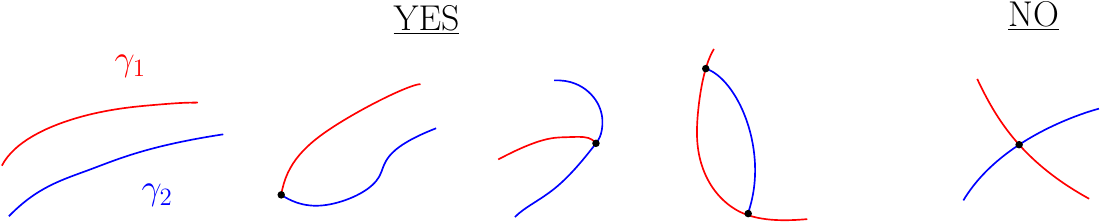}
\caption{Admissible and non-admissible curves in an untangled set}
\label{Disegni RLF}
\end{figure}

In other words, $\Delta$ is untangled if for any $\gamma,\,\overline{\gamma}\in\Delta$ either one graph is a continuation of the other which coincides on an open time interval,
or the two graphs do not intersect in their (relative) interior.
In \cite{Bia_Bo_2020}, under a $BV$ regularity assumption on $\BB$, the following fundamental description of Lagrangian representation
of $\rho\BB$ in terms of untangled families has been established. The theorem is stated in \cite{Bia_Bo_2020} assuming compactness
of the support of $\rho$, but the same proof works in our context, as thanks to Proposition~\ref{extended_lagrangian} our measures $\eeta$ are concentrated on curves with
finite length (under our standing assumptions \eqref{eq:growth} and $\rho\in\mathcal{R}^{\mathcal{I}}_+$ for some bounded open interval $\mathcal{I}$).
Indeed, the support of a Lagrangian representation $\eeta$ admits the decomposition as the union $\bigcup_{n\in\N} \Delta_n$,
where $(\Delta_n)_{n\in\N}$ is an increasing sequence of Borel subsets of $\Gamma_\bb$ such that each curve $\gamma\in\Delta_n$ is contained in $B_n(0)$. Then, Remark~\ref{invaria_rep} ensures that $\eeta\res\Delta_n$ is a Lagrangian representation of $\rho_n\BB$, with $\rho_n\leq \rho$
and \cite{Bia_Bo_2020} can be applied to all $\eeta\res\Delta_n$, allowing to obtain the following result for $\eeta$.

\begin{theorem}\label{teo_Bia_Bo}
    Let $\bb_t\in BV_{\rm loc}(\R^d,\R^d)$ with $|D\bb_t|(B_R)\in L^1(0,T)$ for all $R>0$ and assume that
    $D \cdot (\rho \BB)\in\Measures{\mathbb{R}^{d+1}}$.
    Then any Lagrangian representation $\eeta$ of $\rho\BB$ is untangled.
\end{theorem}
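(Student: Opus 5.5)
The plan is to reduce Theorem~\ref{teo_Bia_Bo} to the compactly supported case treated in \cite{Bia_Bo_2020}, using the localization already sketched before the statement.

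\emph{Step 1: representation.} By Proposition~\ref{extended_lagrangian}, $\eeta$ is concentrated on curves $\gamma\in\Gamma_\bb$ of finite length. Each such curve is bounded, since $\gamma(t_\gamma^\pm)$ are finite limits. Set
$$\Delta_n:=\{\gamma\in\Gamma_\bb:\ \gamma(I_\gamma)\subseteq B_n(0)\}.$$
These sets increase, are Borel (and in fact open in $d_\Upsilon$ intersected with the Borel set $\Gamma_\bb$), and $\eeta$ is concentrated on $\bigcup_n\Delta_n$.

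\emph{Step 2: restricted representations.} By Remark~\ref{invaria_rep}, $\eeta_n:=\eeta\res\Delta_n$ is a Lagrangian representation of $\rho_n\BB$, with $\rho_n\le\rho$. The measure $D\cdot(\rho_n\BB)$ is given by the endpoint formula \eqref{eq:concancellazioni} restricted to $\Delta_n$. Its total variation is bounded by the restriction of \eqref{eq:nocancellazioni}, hence by $|\mu|$, so it is a locally finite measure. Moreover $\rho_n$ is bounded and supported in $\overline{\mathcal I}\times\overline{B_n}$, which is compact. On $(-n,n)\times B_{n+1}$ the field $\bb$ enjoys the $BV$ assumption $|D\bb_t|(B_{n+1})\in L^1$. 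Replacing $\bb$ by $\chi\bb$, with $\chi$ a cutoff equal to $1$ on $B_n$, changes neither the characteristics in $\Delta_n$ nor $\rho_n\BB$. The resulting field is compactly supported in space and has $L^1_tBV_x$ regularity, so the theorem of \cite{Bia_Bo_2020} applies and gives a $\sigma$-compact untangled set $\Delta_n'\subseteq\Delta_n$ on which $\eeta_n$ is concentrated.

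\emph{Step 3: gluing.} Untangledness is not stable under unions, since curves in $\Delta_n'$ and $\Delta_m'$ could intersect badly. So the sets $\Delta_n'$ must be chosen compatibly. The natural attempt is to define $\Delta$ as an increasing union of sets chosen inductively: at step $n$, apply \cite{Bia_Bo_2020} to $\eeta_n$ and intersect the resulting untangled set with the sets $\Delta_m'$, $m<n$, on the pieces where they overlap. The trouble is that $\Delta_m'\cup(\Delta_n'\cap\Delta_m^c)$ need not be untangled.

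A cleaner alternative is to revisit the argument of \cite{Bia_Bo_2020}. There untangledness of a single $\eeta$ is derived from the local statement: for $\eeta\otimes\eeta$-a.e. pair, the pair lies in $\mathring W$, with the failure set having zero measure, and then a $\sigma$-compact untangled set is extracted via a Kellerer-type argument as in Theorem~\ref{thm:Kellerer}. I would use this two-stage structure. Pairs $(\gamma,\bar\gamma)\in\Delta_n\times\Delta_n$ exhaust $\bigcup_n\Delta_n\times\bigcup_n\Delta_n$, because both curves lie in a common $\Delta_{\max}$. Hence the $\eeta\otimes\eeta$-negligibility (more precisely, the negligibility for every transport plan between restrictions of $\eeta$) of $(\Upsilon\times\Upsilon)\setminus\mathring W$ follows from the compact case applied to each $\eeta_n$, together with monotone convergence. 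Then apply the Kellerer duality step once, globally, to the $\sigma$-finite measure $\eeta$, working with a finite equivalent measure, exactly as in the proof of Theorem~\ref{thm:main}(iii). This yields Borel sets of almost full measure free of tangled pairs, and inner regularity makes them $\sigma$-compact.

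\emph{Main obstacle.} I expect the main difficulty to be Step 3: checking that the compact-support result of \cite{Bia_Bo_2020} really delivers the plan-level negligibility of tangled pairs, and not merely an untangled support set for each fixed $n$, so that the global extraction can be performed once for all $n$.
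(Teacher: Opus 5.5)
Your Steps 1 and 2 are exactly the paper's reduction. The paper decomposes into the increasing Borel families $\Delta_n$ of curves contained in $B_n(0)$. It uses Remark~\ref{invaria_rep} to see $\eeta\res\Delta_n$ as a Lagrangian representation of $\rho_n\BB$ with $\rho_n\le\rho$ compactly supported, and it applies \cite{Bia_Bo_2020} to each $\eeta\res\Delta_n$. It then simply asserts that this gives the result for $\eeta$.

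\textbf{The gap is in your Step 3.} You declare the gluing problematic and switch to a Kellerer-type extraction that you make conditional on an unverified feature of the proof in \cite{Bia_Bo_2020}. That extraction, as you state it, only produces untangled sets of \emph{almost} full measure, whereas the statement requires a single untangled set carrying all of $\eeta$. The missing idea is that the gluing is elementary, precisely because the $\Delta_n$ increase. Let $\Delta_n'$ be a $\sigma$-compact untangled set on which $\eeta\res\Delta_n$ is concentrated, i.e.\ $\eeta(\Delta_n\setminus\Delta_n')=0$, and put
\[
\Delta^*:=\bigcup_{m}\bigcap_{n\ge m}\Delta_n'=\liminf_{n}\Delta_n' .
\]
Any two curves of $\Delta^*$ lie in a common $\Delta_N'$, so $\Delta^*\times\Delta^*\subseteq\mathring W$. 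Now suppose $\gamma\in\Delta_{m_0}\setminus\Delta^*$. Then there is $n\ge m_0$ with $\gamma\notin\Delta_n'$, while $\gamma\in\Delta_{m_0}\subseteq\Delta_n$. Hence
\[
\Upsilon\setminus\Delta^*\subseteq\Bigl(\Upsilon\setminus\bigcup_n\Delta_n\Bigr)\cup\bigcup_n(\Delta_n\setminus\Delta_n'),
\]
which is $\eeta$-negligible. Intersect $\Delta^*$ with $\Gamma_\bb$ and use inner regularity of the $\sigma$-finite measure $\eeta$ to get a $\sigma$-compact subset of full measure, which is still untangled. Your inductive attempt failed only because it kept curves of $\Delta_m'$ that drop out of a later $\Delta_n'$. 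Those curves lie in the null sets $\Delta_n\setminus\Delta_n'$ and can simply be discarded.

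\textbf{Your flagged obstacle disappears.} You do not need to reopen \cite{Bia_Bo_2020} to get plan-level negligibility. If $\eeta\res\Delta_n$ is concentrated on the untangled set $\Delta_n'$, then every plan with marginals $\le\eeta\res\Delta_n$ is concentrated on $\Delta_n'\times\Delta_n'\subseteq\mathring W$; monotone convergence then handles plans with marginals $\le\eeta$. So your route can be completed with the black-box result. However, its Kellerer step yields only sets of measure $>1-\epsilon$ (for a finite equivalent measure). Upgrading these to a full-measure untangled set requires the same $\liminf$ device used for $\mathcal N=\limsup_k\mathcal N_{2^{-k}}$ in the proof of Theorem~\ref{thm:main}(iii). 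Applied directly to the $\Delta_n'$, that device makes the duality step superfluous. It also spares you from checking Borel measurability of $(\Upsilon\times\Upsilon)\setminus\mathring W$, which Kellerer duality would require.
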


Note that on untangled families of curves $\Delta\subseteq\Upsilon$ the following equivalence relation is well-posed: 
\begin{equation}\label{equivalencerelation}
\begin{aligned}
\gamma \;\sim_\Delta\; \overline{\gamma}
\;\Longleftrightarrow\;
&\exists\, N \in \mathbb{N},\ \{\gamma_i\}_{i=1}^N \ \text{such that} \\
&\gamma = \gamma_1,\quad \overline{\gamma} = \gamma_N, \\
&\#\bigl( \mathrm{Graph}\,\gamma_i \cap \mathrm{Graph}\,\gamma_{i+1} \bigr) > 2,
\qquad i=1,\ldots, N-1 .
\end{aligned}
\end{equation}

Clearly, for any $a \in \Delta/\hskip -3pt \sim$ the union of the graphs of the curves in the equivalence class associated with $a$, 
forms the graph of a single curve $\sigma_a\in\Upsilon_0$, defined on an open interval $I_a$, which is a characteristic curve for $\bb$.
In the next proposition (see also \cite[Proposition 9.1 and 9.2]{Bia_Bo_2020}) we prove the Borel regularity of the natural maps
associated to this equivalence relation. We shall denote by $\mathscr{G}(\Delta)$ the union of the graphs of the curves in $\Delta$. Thanks to the
$\sigma$-compactness of $\Delta$ this set is easily seen to be Borel and of full $\rho\Leb{d+1}$-measure.

\begin{proposition}[Borel measurability of projections]\label{prop:meas}
Let $\Delta\subseteq\Upsilon$ be a $\sigma$-compact untangled family. Then 
the map $\gamma\mapsto\pi_\Delta(\gamma):=\sigma_{[\gamma]}\in\Upsilon_0$ is Borel. In addition, the map
\begin{equation}\label{eq:defP}
P_\Delta:  \mathscr{G}(\Delta)\mapsto\Upsilon_0
\end{equation}
which maps $(t,x)\in\mathrm{Graph\,}\gamma$ with $\gamma\in\Delta$ to $\sigma_{[\gamma]}$, is Borel.
\end{proposition}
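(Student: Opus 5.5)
The plan is to realize both maps through Borel graphs and countable operations, reducing first to compact pieces. Write $\Delta=\bigcup_n K_n$ with $K_n$ compact and increasing. The key observation is that, by untanglement, two curves $\gamma,\overline\gamma\in\Delta$ whose graphs meet in more than two points actually share a graph on a whole open time interval. This is because the pair lies in $W_2$ rather than $W_1$. Hence the relation $\gamma\approx\overline\gamma$ (``graphs share more than two points'') equals
$$
\bigcup_{q<q'\in\mathbb Q}\bigl\{(\gamma,\overline\gamma):\ (q,q')\subseteq I_\gamma\cap I_{\overline\gamma},\ \gamma=\overline\gamma\ \text{on}\ [q,q']\bigr\}.
$$
On each $K_n\times K_m$ this is a countable union of closed sets, by the convergence properties of $d_\Upsilon$. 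So $\approx$ is Borel ($F_\sigma$ on $\Delta\times\Delta$). The chain relation $\sim_\Delta$ is then $\bigcup_N \approx^{(N)}$, the union of the $N$-fold compositions. Each composition is a projection of a Borel set along intermediate variables ranging in the $\sigma$-compact $\Delta$. Restricted to $K$-pieces and closed pieces this is a projection of $\sigma$-compact sets along compact fibers, which stays $\sigma$-compact. So $\sim_\Delta$ is $F_\sigma$ in $\Delta\times\Delta$, and every equivalence class is $F_\sigma$.

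Next comes the Borel measurability of $\pi_\Delta$. Since $\Upsilon_0$ carries the coarsest topology making the maps $\sigma\mapsto\sigma^k$ continuous, it suffices to show that $\gamma\mapsto(\pi_\Delta\gamma)^k\in\Upsilon$ is Borel for every $k$. I would describe $\sigma_{[\gamma]}$ through its endpoints and values. We have $t^\pm_{\sigma_{[\gamma]}}=\sup/\inf\{t^\pm_{\overline\gamma}:\overline\gamma\sim_\Delta\gamma\}$, and for a rational $t$ the value $\sigma_{[\gamma]}(t)$ is $\overline\gamma(t)$ for any $\overline\gamma\sim\gamma$ with $t\in I_{\overline\gamma}$. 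To show the sup map is Borel, check that $\{\gamma: t^+_{\sigma_{[\gamma]}}>s\}$ is the projection onto the first factor of $\{(\gamma,\overline\gamma)\in\sim_\Delta: t^+_{\overline\gamma}>s\}$. This set is $\sigma$-compact, so its projection is $\sigma$-compact and hence Borel. The value sets are handled the same way: $\{\gamma:\ t\in I_{\sigma_{[\gamma]}},\ \sigma_{[\gamma]}(t)\in U\}$ for open $U$ is the projection of the $\sigma$-compact set $\{(\gamma,\overline\gamma)\in\sim_\Delta:\ t\in I_{\overline\gamma},\ \overline\gamma(t)\in U\}$. Because $\sigma_{[\gamma]}$ is continuous, its values at rational times together with the endpoints determine $(\sigma_{[\gamma]})^k$. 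The map $\Upsilon\ni\sigma^k\mapsto$ (endpoints, values at rationals) is a Borel injection, so its inverse on the image is Borel by Lusin–Souslin. This gives Borel measurability of $\gamma\mapsto(\pi_\Delta\gamma)^k$.

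For $P_\Delta$, consider the set $E=\{((t,x),\gamma)\in\R^{d+1}\times\Delta:\ t\in I_\gamma,\ \gamma(t)=x\}$. On each $\R^{d+1}\times K_n$ it is $\sigma$-compact: take $t\in[t^-_\gamma+1/j,t^+_\gamma-1/j]$ to get closed pieces. We have $P_\Delta(t,x)=\pi_\Delta(\gamma)$ for any $\gamma$ with $((t,x),\gamma)\in E$, and this is well defined by untanglement, since two curves through $(t,x)$ in their interiors share an interval. Therefore the graph of $P_\Delta$ equals the projection of $\{((t,x),\gamma,\sigma): ((t,x),\gamma)\in E,\ \sigma=\pi_\Delta(\gamma)\}$. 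This projection is analytic, and a function with analytic graph between Polish spaces is Borel on its (analytic) domain. Alternatively one can argue directly with $\sigma$-compact projections as above. The well-definedness issue needs care at points where a curve's graph meets another only at an endpoint of the domain. Such a point $(t,x)$ is reached in the interior of one curve, and the open-interval definition of $\mathrm{Graph}$ avoids the problem.

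The main obstacle is the first step: showing that $\sim_\Delta$, a relation defined through finite chains, is Borel (indeed $F_\sigma$). This is exactly where the $\sigma$-compactness of $\Delta$ and the reduction of ``more than two intersection points'' to the closed conditions ``coincide on a rational interval'' must be used. Without these, the projections would only yield analytic sets.
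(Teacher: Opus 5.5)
Your proof is correct, but it takes a different route from the paper's.

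\textbf{The paper's route.} The paper argues constructively. It uses the positive lower bound on the domain lengths of curves in each compact $\Delta_p$ to define quantitative gluing operators ${\bf J}_k$, acting on pairs whose domains overlap on a length $>2^{-k}$. From these it builds by induction compact families $\Delta_p^{(i)}$ of glued curves. It then defines approximate projections $\pi_{q,N}$ on $\Delta_q$: each $\gamma$ is sent to the union of the graphs of all curves in $\bigcup_{p,i\le N}\Delta_p^{(i)}$ that contain $\gamma$. These are upper semicontinuous, hence Borel, compact-valued maps, nondecreasing in $N$, and they converge to $\pi_\Delta$ in $\Upsilon_0$. The map $P_\Delta$ is handled the same way, through Castaing's criterion on the sets $\{\gamma:\gamma\cap U\neq\emptyset\}$.

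\textbf{Your route.} You never glue explicitly. Instead you show that $\sim_\Delta$ itself is $\sigma$-compact in $\Delta\times\Delta$, using the rational-interval description, closedness under $d_\Upsilon$ on compact pieces, and projections of $\sigma$-compact sets. You then read off the endpoints and the rational-time values of $\sigma_{[\gamma]}$ as projections of $\sigma$-compact sets, reconstruct the restricted curves by Lusin--Souslin, and obtain $P_\Delta$ from the analytic-graph theorem. Note that the domain $\mathscr G(\Delta)$, being the projection of your set $E$, is actually $\sigma$-compact, so the preimages you get are genuinely Borel and not merely relatively Borel in an analytic set.

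\textbf{Comparison.} The paper's construction gives a concrete structural by-product: each $\sigma_{[\gamma]}$ is a monotone limit of finitely many gluings drawn from compact families. However, it leaves to the reader the compactness of the $\Delta_p^{(i)}$, the upper semicontinuity of $\pi_{q,N}$, and the convergence in $\Upsilon_0$. Your argument needs no quantitative overlap bound and reduces every step to standard facts: closed or open pieces and continuous images of $\sigma$-compact sets are $\sigma$-compact. It also yields the extra information that $\sim_\Delta$ is $K_\sigma$. The cost is that it is non-constructive and relies on the Lusin--Souslin and Souslin theorems.
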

\begin{proof}  Let us write $\Delta=\cup_p \Delta_p$ with $\Delta_p\subseteq \Delta_{p+1}$ compact, $p\geq 0$. Notice that
compactness of $\Delta_p$ grants a positive lower bound on the length of the intervals of curves $\gamma\in\Delta_p$. We denote by ${\bf J}_k$ the operator
associating to a pair $(\gamma,\tilde\gamma)\in\Delta\times\Delta$ with 
intersections of the domains having length larger than $2^{-k}$ the
new curve obtained by gluing the two curves. Since $\Delta$ is untangled, this construction is possible.

We now define recursively $\Delta_p^{(i)}$ as follows: $\Delta_p^{(0)}=\Delta_p$ and $\Delta_p^{(i+1)}$ is
obtained by applying the gluing ${\bf J}_i$ operator to admissible pairs of curves in $\Delta_p^{(i)}$ and $\Delta_p$.
It is not difficult to prove by induction on $i$ that all $\Delta_p^{(i)}$ are compact (here the lower bound on the length
of intersection plays a role), and that the graphs of all curves $\sigma_{[\gamma]}$
are finite or countable unions of graphs of curves in $\bigcup_{p,i}\Delta_p^{(i)}$.

Now, we fix $q$, $N$ with $N>q$ and we consider the map $\pi_{q,N}:\Delta_q\to\Upsilon$ which associates to a curve $\gamma\in \Delta_q$ the
curve whose graph is the union of the graphs of all curves in
$\bigcup_{p,i=0}^N \Delta_p^{(i)}$ containing $\gamma$ as subcurve. 
Thanks to the compactness of the domain and of the class of curves chosen to build $\pi_{q,N}(\gamma)$, this set-valued map is 
upper semicontinuous, hence Borel (see for instance \cite[Theorem II.20 and Corollary III.3]{Castaing}). It is also clear from
the construction that the graphs of $\pi_{q,N}(\gamma)$ are non decreasing with respect to $N$. 

Now, if we view $\pi_{q,N}$ as an $\Upsilon_0$-valued map, it is  Borel as well and it is clear that
$\pi_{q,N}(\gamma)$ converge as $N\to\infty$ to $\pi_\Delta(\gamma)=\sigma_{[\gamma]}$. This proves that the restriction
of the map $\gamma\mapsto \sigma_{[\gamma]}$ to $\Delta_q$ is Borel, and since $q$ is arbitrary we obtain that the map $\pi_\Delta$
is Borel on $\Delta$.

Finally, to prove that the map $P_\Delta$ is Borel we follow a similar argument, considering the $\Upsilon$-valued approximations
\begin{equation}
P_{q,N}:  \bigcup_{\gamma \in \Delta_q} \mathrm{Graph\,}\gamma\mapsto\Upsilon
\end{equation}

which maps $(t,x)\in\mathrm{Graph\,}\gamma$ with $\gamma\in\Delta_q$ to the curve $P_{q,N}(t,x)$ whose
graph is the union of all graphs in $\bigcup_{p,i=0}^N \Delta_p^{(i)}$ 
containing $(t,x)$.
To check that $P_{q,N}$ is Borel it is sufficient to show
(\cite[Theorem~III.2]{Castaing}) that  
$$
P_{q,N}^{-1}\bigl(\{\gamma: \gamma\cap U\neq\emptyset\}\bigr)
$$
is a Borel set for any open set $U\subseteq\R^{d+1}$. This set coincides with $$\mathscr{G}(\Delta_q)\cap\bigcup_{\sigma \in\bigcup_{p,i=0}^N \Delta_p^{(i)}\,\textit{s.t.}\,\mathrm{Graph\,}\sigma\cap U\neq\emptyset}\mathrm{Graph\,}\sigma$$ which is $\sigma$-compact, hence Borel.

\end{proof}

\subsection{Partition via characteristics}\label{sec_parvia}

Given an untangled Lagrangian representation $\eeta$, we consider an untangled and $\sigma$-compact set $\Delta$ where
 $\eeta$ is concentrated. Now, we can take as index set $\mathcal A$ the Polish space $\Upsilon_0$ and consider the canonical
 maps $\pi_\Delta:\Delta\to\mathcal A$ and $P_\Delta:\mathscr{G}(\Delta)\to\mathcal A$ of Proposition~\ref{prop:meas}. It will be technically
 convenient to endow $\mathcal A$ with the largest $\sigma$-algebra $\mathcal F$ that makes $P_\Delta$ measurable.

\begin{figure}
\includegraphics[width=6cm]{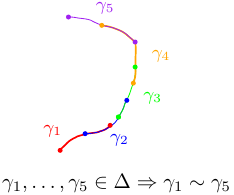}
{\bigskip} \label{Disegni RLF_2}
\caption{}
\end{figure}

It is then possible to disintegrate  $\eeta$ along the curves in $\Upsilon_0$ built by gluing the curves in the same equivalence class
and indexed by $a\in\mathcal A$ (we also use the notation $\sigma_a$ for the curve in $\Upsilon_0$ corresponding to $a$). Indeed, denoting ${\bf m}:=(\pi_\Delta)_\#\eeta\in\Measuresp{{\mathcal A}}$
and using Theorem~\ref{thm:disinte} we can first write
$\eeta=\int_{{\mathcal A}}\eeta_a\,d{\bf m}(a)$, with $\eeta_a\in\Probabilities{\Upsilon}$ concentrated for ${\bf m}$-a.e. $a\in\mathcal A$
in the curves $\gamma$ with
$\gamma\in\pi_\Delta^{-1}(a)$. Then, by considering the first component in \eqref{rhoBBrep}, we find
\begin{equation}\label{Eq1Lagrangianforuntangled}   
\rho\Leb{d+1}=\int_{\mathcal{A}}\left[\int_\Upsilon
(\mathrm{Id},\gamma)_\#(\Leb{1})\, d\eeta_a (\gamma)\right]\,d {\bf m}(a)
=\int_{\mathcal{A}}(\mathrm{Id},\sigma_a)_\#(w_a \Leb{1})\, d{\bf m}(a),
\end{equation}
with 
\begin{equation}\label{eq:formulawa}
w_a (t)=\int_\Upsilon\uno_{(t_\gamma^-,t_\gamma^+)}(t)\,d\eeta_a (\gamma)=\eeta_a (\{\gamma\in \pi_\Delta^{-1}(a):\, t\in (t_\gamma^-,t_\gamma^+) \})
\qquad t\in\R.
\end{equation}

Notice that, by construction, $w_a=0$ out of the domain $I_a$ of $\sigma_a$.
Moreover, it is not difficult to show (see \cite[Proposition 9.3]{Bia_Bo_2020}) that the properties of Lagrangian representations are transferred through this construction into the following additional identities:

\begin{equation}\label{Eq2Lagrangianforuntangle}
\mu=\int_{{\mathcal A}}\mu_a\,d{\bf m}(a)\qquad\text{and}\qquad\quad|\mu|=\int_{{\mathcal A}}|\mu_a|\,d{\bf m}(a).
\end{equation}
and 
\begin{equation}\label{eq_ODEwa}
    Dw_a=\mu_a\quad\textrm{for ${\bf m}$-a.e. $a\in\mathcal A$}
\end{equation}
where 
$$
\mu_a:=\int_\Upsilon\left(\delta_{(t_\gamma^-,\gamma(t_\gamma^-))}-\delta_{(t_\gamma^+,\gamma(t_\gamma^+))}\right)\, d\eeta_a (\gamma).
$$
In particular, \eqref{eq_ODEwa} implies  that $w_a \in BV(\R)$ for ${\bf m}$-a.e. $a\in\mathcal A$ (notice also that $w_a$ has
even pointwise bounded variation for ${\bf m}$-a.e. $a\in\mathcal A$, as the sum of the Dirac masses is finite thanks
to \eqref{eq:nocancellazioni}).

We will henceforth denote by $\tilde{w}_{a}$ the lower semicontinuous representative of $w_a$.

\smallskip

This way we have built a partition via characteristics, with $\mathcal A=\Upsilon_0$ and $P$ as in \eqref{eq:defP}, whose abstract definition is given below. In the abstract 
viewpoint, it is technically convenient to consider more general measurable spaces $(\mathcal A,\mathcal F)$, whose
properties (fully consistent with the previous construction) are listed below.

\begin{definition}[Partition via characteristics]
\label{def:partition_characteristics}
A partition via characteristics of $\rho\BB$ with $D \cdot (\rho\BB)\in\Measures{\R^{d+1}}$
consists of a Lagrangian representation $\eeta$ of $\rho\BB$ concentrated on a $\sigma$-compact untangled set $\Delta$, a Polish space $\mathcal A$ and a map $P:\mathscr{G}(\Delta)\to\mathcal A$ satisfying the following properties:
\begin{itemize}
    \item[(1)]the largest $\sigma$-algebra $\mathcal F$ of $\mathcal A$ that makes $P$ $(\Borel{\R^{d+1}},\mathcal F)$-measurable 
    contains $\Borel{\mathcal{A}}$,  the map $\pi_P:\Delta\to\mathcal A$
    \begin{equation}\label{eq:def_piP} 
    \pi_P(\gamma) := P(t,x)\quad\text{where $x=\gamma(t)$ for some $t\in (t_\gamma^-,t_\gamma^+)$, $\gamma\in\Delta$}
    \end{equation}
    is Borel and well defined on $\Delta$, i.e. invariant under the equivalence relation $\sim_\Delta$ in \eqref{equivalencerelation}, and setting ${\bf m}_P := (\pi_P)_\# \eeta$, $P^{-1}(a)$ is the graph of a characteristic curve $\sigma_a\in\Upsilon_0$
    in some open interval $I_a$ for ${\bf m}_P$-a.e. $a\in\mathcal A$;
    
    \item[(2)]  \eqref{Eq1Lagrangianforuntangled}, 
    \eqref{Eq2Lagrangianforuntangle}, \eqref{eq_ODEwa} hold with ${\bf m}={\bf m}_P$;
    \item[(3)] letting $(w_a)_{a\in\mathcal{A}}$ be the densities arising from the disintegration
    \begin{equation}\label{eqpartition1}
        \rho \mathscr{L}^{d+1} = \int_{\mathcal{A}} (\mathrm{Id}, \sigma_a)_\# (w_a \mathscr{L}^1) \, d{\bf m}_P(a)
    \end{equation}
    one has 
    \begin{equation}\label{eqpartition2}
        D w_a = \mu_a \in \Measures{\mathbb{R}}, \quad \text{for } {\bf m}_P\text{-a.e. } a \in \mathcal A,
    \end{equation}
    where $w_a$ is considered extended to $0$ outside $I_a$;
    \item[(4)] with $\mu=D \cdot (\rho\BB)$ it holds
    \begin{equation}\label{eqpartition3}
        \mu = \int_{\mathcal{A}} (\mathrm{Id}, \sigma_a)_\# \mu_a \, d{\bf m}_P(a) \quad \text{and} \quad 
        |\mu| = \int_{\mathcal{A}} (\mathrm{Id}, \sigma_a)_\# |\mu_a| \, d{\bf m}_P(a).
    \end{equation}
\end{itemize}
\end{definition}

\begin{definition}[Minimal partition by characteristics]
We say that a partition by characteristics is minimal if $\tilde{w}_a>0$ in $I_a$ for ${\bf m}_P$-a.e. $a\in
\mathcal A$.
\end{definition}

Since the condition $\tilde{w}_a>0$ is open, possibly splitting $\sigma_a$ in countably many subcurves where
$\tilde{w}_a>0$ in the interior of their domain $I_a$,
it is always possible to refine a partition by characteristics of $\rho\BB$ to produce a {\it minimal} partition via characteristics of $\rho \BB$
(while the explicit construction made before provides the {\it maximal} partition, given $\Delta$). 
For the sake of brevity we just sketch a construction, omitting measurability details. We consider in the product $\Upsilon_0\times\mathcal A$ the
set $G$ of pairs $(\sigma,a)$ with $I_\sigma$ connected component of $\{\tilde{w}_a>0\}$. Since for ${\bf m}_P$-a.e. $a\in\mathcal A$ the
set $\{\sigma:\ (\sigma,a)\in G\}$ is at most countable, we can cover $G$ (\cite[Theorem~18.10]{Kechris}) 
by a sequence of ${\bf m}_P$-measurable functions $f_n(a)$.
Then, choosing as index set $\mathcal A'=\mathcal A\times\N$, we may define 
$$
P_{\rm min}(t,x):=\left\{(a,n):\ \text{$n$ minimal integer such that $(f_n(a),a)\in G$, $(t,x)\in\mathrm{Graph}(f_n(a))$}\right\}.
$$

\begin{remark}\label{eta+eta'} {\rm Observe that if $\eeta,\,\eeta'$ are Lagrangian representations of $\rho\BB,\,\rho'\BB$ respectively that are concentrated on the
 same untangled set $\Delta$, then it is still possible to carry out the same construction presented above on $\eeta+\eeta'$ and obtain the equivalent of \eqref{eqpartition1} with $w_a \in BV(I_a)$, as it is the sum of $BV$ functions. Similarly, \eqref{eqpartition2} and the first equality in \eqref{eqpartition3} hold
 for $\mu=D\cdot((\rho+\rho')\BB)$.  On the other hand, the full validity of \eqref{eqpartition3} is not granted, as $\eeta+\eeta'$ might not be a Lagrangian representation for $(\rho+\rho')\BB$.}
\end{remark}

 \begin{lemma}\label{lemmasubsupports}
Let $\eeta$ be a Lagrangian representation concentrated on $\Delta\subseteq\Upsilon$, and let $\Delta'\supseteq \Delta$ be a $\sigma$-compact untangled set.
Let $P:\mathscr{G}(\Delta)\to\mathcal A$ and $P':\mathscr{G}(\Delta')\to\mathcal A'$ be the partitions via characteristics induced by $\Delta$ and $\Delta'$, respectively.
Then there exists a measurable map $U:\mathcal A\to\mathcal A'$, defined implicitly ${\bf m}_P$-almost everywhere by the identity
\[
\pi_{P'} = U \circ \pi_P,
\]
such that the preimage $U^{-1}(a')$ is countable for every $a'\in\mathcal A'$, and
\[
U_\# {\bf m}_P = {\bf m}_{P'}.
\]
Furthermore, considering the disintegration ${\bf m}_P={\bf m}_{P'}\otimes p_{a'}$ with $p_{a'}\in \Probabilities{\mathcal A}$ concentrated on $U^{-1}(a')$, 
one has
    \begin{equation}\label{formuladensitasottoclasse}
        \tilde{w}_{a'} (t)=\sum_{a\in U^{-1}(a')}\tilde{w}_{a}(t)p_{a'}(\{a\}) \qquad\text{for ${\bf m}_{P'}$-almost every $a'\in \mathcal{A}'$,}
    \end{equation}
    where $I_a\cap I_b=\emptyset$ for $a,\,b\in U^{-1}(a')$ with $a\neq b$, so that
    \begin{equation}\label{disjointdomains}
        \{t\in I_{a'}\,\mid \tilde{w}_{a'}(t)>0\}=\bigcup_{a\in U^{-1}(a')} \{t\in I_a\,\mid \tilde{w}_{a}>0\}.
    \end{equation}
    In particular,  for ${\bf m}_{P'}$-almost every $a'$, if $\tilde{w}_{a'}\vert_{I_{a'}}>0$, then  $U^{-1}(a')$ is a singleton.
\end{lemma}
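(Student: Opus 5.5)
The plan is to work at the level of curves and only then pass to the index sets and densities. First I would define $U$. Since $\Delta\subseteq\Delta'$ and both sets are untangled, two curves of $\Delta$ that are $\sim_\Delta$-equivalent are also $\sim_{\Delta'}$-equivalent. The same chain $\gamma_1,\dots,\gamma_N$ works, because the condition $\#(\mathrm{Graph}\,\gamma_i\cap\mathrm{Graph}\,\gamma_{i+1})>2$ does not depend on the ambient family. Hence every $\Delta$-class lies inside a single $\Delta'$-class, and I set $U(a):=\pi_{P'}(\gamma)$ for any $\gamma\in\pi_P^{-1}(a)$. This gives $\pi_{P'}=U\circ\pi_P$ on $\Delta$, and $U$ is well defined ${\bf m}_P$-a.e. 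For measurability I would note that $P$ and $P'$ agree in the sense $P'=U\circ P$ on $\mathscr G(\Delta)$. With $\mathcal F$ the largest $\sigma$-algebra making $P$ measurable, $U^{-1}(F')$ then lies in $\mathcal F$ for every $F'\in\mathcal F'$, because $P^{-1}(U^{-1}(F'))=P'^{-1}(F')\cap\mathscr G(\Delta)$ is Borel. The push-forward identity is then immediate: $U_\#{\bf m}_P=U_\#(\pi_P)_\#\eeta=(\pi_{P'})_\#\eeta={\bf m}_{P'}$.

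Next I would show that the curves $\sigma_a$, $a\in U^{-1}(a')$, are subcurves of $\sigma_{a'}$ with pairwise disjoint time domains. This is the key geometric step, and it is where untanglement of $\Delta'$ is used. Each $\sigma_a$ is a union of graphs of curves in $\Delta\subseteq\Delta'$ lying in the class $a'$, so $\mathrm{Graph}\,\sigma_a\subseteq\mathrm{Graph}\,\sigma_{a'}$. Suppose $a\neq b$ and $I_a\cap I_b\neq\emptyset$. Then $\sigma_a$ and $\sigma_b$ coincide on the open interval $I_a\cap I_b$, being restrictions of the same curve $\sigma_{a'}$. Consequently some curve of $\Delta$ in class $a$ and some curve of $\Delta$ in class $b$ share an open time interval, and hence more than two graph points. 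That would make them $\sim_\Delta$-equivalent, a contradiction. Countability of $U^{-1}(a')$ follows because disjoint nonempty open subintervals of $I_{a'}$ form at most a countable family.

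For \eqref{formuladensitasottoclasse} I would disintegrate in two steps. Write $\eeta=\int\eeta_a\,d{\bf m}_P(a)$ and ${\bf m}_P=\int p_{a'}\,d{\bf m}_{P'}(a')$. By uniqueness in Theorem~\ref{thm:disinte}, the conditional measure of $\eeta$ with respect to $\pi_{P'}$ is
\[
\eeta_{a'}=\sum_{a\in U^{-1}(a')}p_{a'}(\{a\})\,\eeta_a\quad\text{for ${\bf m}_{P'}$-a.e. $a'$,}
\]
where I use that $p_{a'}$ is concentrated on the countable set $U^{-1}(a')$, so it is purely atomic. Inserting this into \eqref{eq:formulawa} gives $w_{a'}=\sum_a p_{a'}(\{a\})w_a$ almost everywhere. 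I expect the main obstacle to be passing to the lower semicontinuous representatives. Because the domains $I_a$ are disjoint and each $w_a$ vanishes outside $I_a$, near any point $t$ only the terms with $t\in\overline I_a$ can matter, and at most two such intervals have $t$ as an endpoint. I would check, using $w_a\in BV$ and the fact that the jump parts of the $\mu_a$ add without cancellation thanks to \eqref{Eq2Lagrangianforuntangle} and \eqref{eq:nocancellazioni}, that the lower semicontinuous envelope of the sum equals the sum of the envelopes. That yields \eqref{formuladensitasottoclasse}.

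Finally, \eqref{disjointdomains} follows from the pointwise formula together with disjointness of the $I_a$ and positivity of the weights $p_{a'}(\{a\})$, which holds after discarding null atoms. For the last claim, if $\tilde w_{a'}>0$ on the whole interval $I_{a'}$, then $I_{a'}$ is the union of the disjoint open sets $\{\tilde w_a>0\}\subseteq I_a$. Since an interval is connected, only one of these sets can be nonempty. This forces $U^{-1}(a')$ to be a singleton up to $p_{a'}$-null points.
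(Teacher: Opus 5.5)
Your proposal is correct and follows essentially the paper's route: $U$ comes from the inclusion of $\Delta$-classes in $\Delta'$-classes, measurability from $P^{-1}(U^{-1}(F'))=P'^{-1}(F')\cap\mathscr{G}(\Delta)$, and countability from the disjointness of the domains $I_a\subseteq I_{a'}$. The identity for the lower semicontinuous representatives rests, as in the paper, on the absence of cancellations at common endpoints, which the paper makes precise by deducing $|Dw_{a'}|=\sum_a p_{a'}(\{a\})|Dw_a|$ from \eqref{eqpartition3} applied to both partitions. Your variations (obtaining $w_{a'}=\sum_a p_{a'}(\{a\})w_a$ from the two-step disintegration of $\eeta$, and using connectedness of $I_{a'}$ for the singleton claim) are harmless. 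One caveat: infinitely many $I_a$ may accumulate at a point, so passing one-sided limits through the countable sum needs dominated convergence rather than your purely local ``near $t$'' argument.
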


\begin{proof}
      Since $\Delta' \supseteq \Delta$, curves in $\Delta$ which are not equivalent with respect to $\pi_\Delta$ might be equivalent with
      respect to $\pi_{\Delta'}$, because of the addition of curves. Hence, to ${\bf m}_{P}$-almost every  $a\in \mathcal{A}$ we may 
      associate $U(a):=a'$ such that $P^{-1}(a) \subseteq (P')^{-1}(a')$.
   
    Noting that for $a,\,b\in U^{-1}(a')$ with $a\neq b$ one has that $P^{-1}(a)$ and $P^{-1}(b)$ are disjoint subcurves of $(P')^{-1}(a')$, it follows that $U^{-1}(a')$ is at most countable. The measurability of $U$ is a consequence of the following fact: for any $\Omega\subseteq \mathcal{A}'$ one has $U^{-1}(\Omega)=P(P'^{-1}(\Omega)\cap \mathcal{G}(\Delta))$
and $(P')^{-1}(\Omega)\cap \mathcal{G}(\Delta)=\bigcup_{a\in U^{-1}(\Omega)}P^{-1}(a)$.  Hence, if $\Omega\in\mathcal F_{\mathcal{A}'}$, 
$P'^{-1}(\Omega)\cap \mathcal{G}(\Delta)$ is a Borel and $P$-saturated set, so that $P(P'^{-1}(\Omega)\cap \mathcal{G}(\Delta))$ is, by definition, an element 
of the $\sigma$-algebra $\mathcal F_{\mathcal{A}}$. We have 
$$U_\# {\bf m}_P=U_\#(\pi_P)_\# \eeta=(\pi_{P'})_\# \eeta={\bf m}_{P'}.$$
By disintegration we have ${\bf m}_{P}={\bf m}_{P'}\otimes p_{a'}$ with $p_{a'}\in\mathcal{P}(\mathcal{A})$ concentrated on
$U^{-1}(a')$ and
$$\rho \mathscr{L}^{d+1} = \int (\mathrm{Id}, \sigma_{a'})_\# \biggl(\sum_{a\in U^{-1}(a')} p_{a'}(\{a\}) w_a  \mathscr{L}^1\biggr) \, d{\bf m}_{P'}(a'),$$
so that the uniqueness of the disintegration of $\rho \mathscr{L}^{d+1}$ with respect to ${\bf m}_{P'}$ gives
     $$w_{a'}(t) =\sum_{a\in U^{-1}(a')} p_{a'}(\{a\})w_{a}(t) \qquad\text{for $\Leb{1}$-a.e. $t\in\mathbb{R}$} $$
    for ${\bf m}_{P'}$-a.e. $a'\in\mathcal{A}'$.
    Formula \eqref{formuladensitasottoclasse} is trivial at points $t\in I_a$ with $U(a)=a'$. At points
    in the intersection $\overline{I}_a\cap\overline{I}_b$ with $a\neq b$ and $U(a)=U(b)=a'$ it
    follows by noting that \eqref{eqpartition2} and \eqref{eqpartition3} imply that
\begin{equation*}
     |\mu| = \int_{\mathcal{\mathcal{A'}}} (\mathrm{Id}, \sigma_{a'})_\# |\mu_{a'}| \, d{\bf m}_{P'}(a')\leq 
     \int_{\mathcal{\mathcal{A'}}} (\mathrm{Id}, \sigma_{a'})_\# \sum_{a\in U^{-1}(a')} p_{a'}(\{a\})|\mu_{a}| \, d{\bf m}_{P'}(a')=|\mu|,
\end{equation*}
hence 
  \begin{equation*}
         |D w_{a'}|=|\mu_{a'}|=\sum_{a\in U^{-1}(a')} p_{a'}(\{a\})|\mu_{a}|  =\sum_{a\in U^{-1}(a')} p_{a'}(\{a\})|D w_{a}|\qquad\text{ for ${\bf m}_{P'}$-a.e. $a'\in\mathcal A'$,}
    \end{equation*}
    which implies
 \begin{equation}\label{eq_goodsumderivatives}
         (D w_{a'})^+ =\sum_{a\in U^{-1}(a')} p_{a'}(\{a\})(D w_{a})^+ \qquad\text{ for ${\bf m}_{P'}$-a.e. $a'\in\mathcal A'$.}
    \end{equation}
 
 Finally, if $\tilde{w}_{a'}\vert_{I_{a'}}>0$ and both \eqref{eq_goodsumderivatives} and \eqref{formuladensitasottoclasse} hold, then $U^{-1}(a')$ is a singleton. Indeed, the addends on the right-hand side of \eqref{formuladensitasottoclasse} have supports with disjoint interiors and \eqref{eq_goodsumderivatives} rules out cancellations of mass of the derivatives at the boundary.
\end{proof}

\begin{lemma}\label{lem_partizionieinsiemidicurve}
 Let $\eeta$ be a Lagrangian representation concentrated on an untangled set of curves $\Delta'$ and let $P'$ be the induced minimal partition via characteristics. Then, for any $\eeta$-negligible and $\mathcal{G}_\delta$ set $N$, $\Delta'\setminus N$ still produces the same partition, 
 up to a ${\bf m}_{P'}$-negligible set.
\end{lemma}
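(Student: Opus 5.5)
The plan is to compare the two partitions through Lemma~\ref{lemmasubsupports}, using $\Delta:=\Delta'\setminus N$ as the smaller family. First I would check that $\Delta$ is admissible. It is untangled, as a subset of an untangled set. It is $\sigma$-compact: $N$ is $\mathcal G_\delta$, so $\Delta'\setminus N=\Delta'\cap(\Upsilon\setminus N)$ intersects a $\sigma$-compact set with an $F_\sigma$ set, which is again $\sigma$-compact in the Polish space $\Upsilon$ (a closed subset of a compact set is compact). Since $\eeta(N)=0$, $\eeta$ is still concentrated on $\Delta$. Hence the construction of Section~\ref{sec_parvia} applies and produces a partition $P$ on $\mathscr{G}(\Delta)$, which I then refine to its minimal version. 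Lemma~\ref{lemmasubsupports}, applied with the inclusion $\Delta\subseteq\Delta'$, gives a measurable $U:\mathcal A\to\mathcal A'$ with $\pi_{P'}=U\circ\pi_P$, countable fibres, $U_\#{\bf m}_P={\bf m}_{P'}$, and the decomposition formulas \eqref{formuladensitasottoclasse} and \eqref{disjointdomains}.

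Next I would use minimality of $P'$, namely $\tilde w_{a'}>0$ on $I_{a'}$ for ${\bf m}_{P'}$-a.e.\ $a'$. The last assertion of Lemma~\ref{lemmasubsupports} then shows that $U^{-1}(a')$ is a singleton for ${\bf m}_{P'}$-a.e.\ $a'$. So $U$ is injective off a negligible set, and ${\bf m}_P$ and ${\bf m}_{P'}$ correspond under $U$.

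It remains to identify the curves. For the unique $a$ with $U(a)=a'$, \eqref{disjointdomains} gives $\{\tilde w_a>0\}\cap I_a=\{\tilde w_{a'}>0\}=I_{a'}$. Since $I_a\subseteq I_{a'}$ and $\sigma_a$ is the restriction of $\sigma_{a'}$ (because $P^{-1}(a)\subseteq (P')^{-1}(a')$), this forces $I_a=I_{a'}$ and $\sigma_a=\sigma_{a'}$. Formula \eqref{formuladensitasottoclasse}, with $p_{a'}=\delta_a$, then gives $\tilde w_a=\tilde w_{a'}$. Consequently the two partitions have the same curves, weights and quotient measure, up to an ${\bf m}_{P'}$-negligible set, which is the claim.

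The main obstacle I expect is the refinement step. When the maximal partition of $\Delta$ is made minimal, a curve $\sigma_a$ could in principle be split further. One must check that Lemma~\ref{lemmasubsupports} still applies when the minimal partition of $\Delta$ is compared with the minimal partition of $\Delta'$, which requires $P^{-1}(a)\subseteq (P')^{-1}(a')$ to persist after refinement. I would handle this by first applying the lemma to the maximal partitions, where it applies directly. Then I would observe that refining both sides according to the connected components of $\{\tilde w>0\}$ commutes with $U$, thanks to \eqref{disjointdomains}.
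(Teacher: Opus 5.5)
Your proposal is correct and follows the paper's proof. Like the paper, you apply Lemma~\ref{lemmasubsupports} with $\Delta=\Delta'\setminus N$, which is $\sigma$-compact, untangled and carries $\eeta$, and then use the minimality of $P'$ to conclude that $U^{-1}(a')$ is a singleton for ${\bf m}_{P'}$-a.e.\ $a'$. Your extra steps only make explicit what the paper's two-line proof leaves implicit: identifying $I_a=I_{a'}$ and $\tilde w_a=\tilde w_{a'}$ via \eqref{disjointdomains}, and checking that passing from maximal to minimal partitions commutes with $U$.
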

\begin{proof} We apply Lemma~\ref{lemmasubsupports} with $\Delta=\Delta'\setminus N$, a $\sigma$-compact untangled set where $\eeta$
is still concentrated. The minimality of the partition $P'$ then yields that $U^{-1}(a')$ is a singleton for ${\bf m}_{P'}$-a.e. $a'\in\mathcal A'$. 
\end{proof}

\begin{proposition}[Uniqueness of the minimal partition via characteristics]
Assume that $\rho\BB$ with $D\cdot(\rho\BB)\in\Measures{\mathbb{R}^{d+1}}$ admits only untangled Lagrangian representations. 
Then the minimal partition via characteristics $P$ of $\rho\BB$ is unique up to an isomorphism of the measure space
$(\mathcal{A},{\bf m}_P)$.    
\end{proposition}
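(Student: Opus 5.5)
The plan is to reduce the comparison of two arbitrary minimal partitions to the case where both come from nested untangled sets. Then Lemma~\ref{lemmasubsupports} and minimality identify the index spaces. Let $P_1:\mathscr{G}(\Delta_1)\to\mathcal A_1$ and $P_2:\mathscr{G}(\Delta_2)\to\mathcal A_2$ be two minimal partitions via characteristics of $\rho\BB$. They are induced by Lagrangian representations $\eeta_1,\eeta_2$, concentrated on $\sigma$-compact untangled sets $\Delta_1,\Delta_2\subseteq\Gamma_\bb$.

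The first step is to produce a common untangled set. I consider $\eeta:=\frac12(\eeta_1+\eeta_2)$. This measure satisfies \eqref{rhoBBrep} and the first identity in \eqref{eq:concancellazioni}, but it is not obviously a Lagrangian representation, since \eqref{eq:nocancellazioni} could fail for the sum (Remark~\ref{invaria_rep}). I would handle this as follows.
\begin{itemize}
\item[(a)] The triangle inequality gives $|\mu|\le\int(\delta^-+\delta^+)\,d\eeta=\frac12|\mu|+\frac12|\mu|=|\mu|$.
\item[(b)] Hence equality holds and \eqref{eq:nocancellazioni} is satisfied, so $\eeta$ is a genuine Lagrangian representation.
\item[(c)] By hypothesis $\eeta$ is then untangled, i.e.\ concentrated on a $\sigma$-compact untangled $\Delta\subseteq\Gamma_\bb$.
\end{itemize}
Next I replace $\Delta_i$ by $\Delta_i'':=\Delta_i\cap\Delta$, up to $\eeta_i$-null sets. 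To get $\sigma$-compactness I intersect with a $\sigma$-compact subset of full measure, removing a $\mathcal G_\delta$ null set. Lemma~\ref{lem_partizionieinsiemidicurve} says that $\Delta_i''$ induces the same minimal partition as $\Delta_i$, up to ${\bf m}_{P_i}$-null sets. So I may assume $\Delta_1,\Delta_2\subseteq\Delta$.

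The second step compares each $P_i$ with the minimal partition $P$ induced by $\Delta$ and $\eeta$. Since $\eeta_i\le2\eeta$, I write $\eeta_i=f_i\eeta$. Applying Lemma~\ref{lemmasubsupports} with $\Delta_i\subseteq\Delta$ to the representation $\eeta_i$ gives $U_i:\mathcal A_i\to\mathcal A'$, where $P'$ is the partition induced by $\Delta$ for $\eeta_i$, with countable fibres and the decomposition \eqref{formuladensitasottoclasse}. In the other direction I aim to show that the fibres are singletons and that $U_i$ is injective. The key observation is that the graphs $\sigma_a$ of the minimal partition $P_i$ are exactly the connected components of the positivity sets $\{\tilde w>0\}$ along the maximal glued curves of $\Delta$. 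This holds because the densities are additive in the sense of Remark~\ref{eta+eta'} and the derivatives have no cancellation, by (b) and \eqref{eq_goodsumderivatives}. Consequently, the refinement of $P$ to its minimal version (splitting along $\{\tilde w_a>0\}$) and the corresponding refinement for $\eeta_i$ produce the same curves.

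Concretely, I will show that for ${\bf m}$-a.e.\ $a$ on the $\Delta$-side, $\{\tilde w_a>0\}=\{\tilde w^1_a>0\}\cup\{\tilde w^2_a>0\}$, and that the components coincide. This requires showing that a component of $\{\tilde w^1_a>0\}$ cannot end at an interior point of a component of $\{\tilde w^2_a>0\}$. At such a point $Dw^1_a$ has a negative atom and $Dw^2_a$ has no atom there. I would then use the absence of cancellation in $|\mu|$, comparing $|D w_a|$ with $\frac12(|Dw^1_a|+|Dw^2_a|)$. I will need this no-cancellation statement in a form stronger than (a)--(b); it should follow from (a)--(b) once one checks that endpoints of curves in $\Delta_1$ and $\Delta_2$ cannot cancel on a set of positive measure. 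Once the components coincide, the maps $a_i\mapsto$ (component containing $\sigma_{a_i}$) give measurable bijections $\mathcal A_1\to\mathcal A_{\min}\leftarrow\mathcal A_2$ a.e. The measures correspond via $U_\#{\bf m}_P={\bf m}_{P'}$, giving the isomorphism.

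The main obstacle is the component-matching step: ruling out that a minimal curve of one partition is a proper subcurve of a minimal curve of the other. I would first try to deduce it from the equality case of the triangle inequality in (a), written along each glued curve via \eqref{eqpartition3}. If that fails, I would argue by contradiction: a positive-measure set of such mismatches would let me build, from $\eeta_1$ and $\eeta_2$, a Lagrangian representation that fails to be untangled, contradicting the hypothesis.
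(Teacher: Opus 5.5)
Your skeleton matches the paper's. The average $\eeta=\frac12(\eeta_1+\eeta_2)$ is a Lagrangian representation, since \eqref{rhoBBrep}--\eqref{eq:nocancellazioni} are affine in $\eeta$ once $\mu$ is fixed. It is untangled by hypothesis. Lemma~\ref{lem_partizionieinsiemidicurve} lets you discard a $\mathcal G_\delta$ null set so that $\Delta_1,\Delta_2$ lie in a common untangled set, and Lemma~\ref{lemmasubsupports} compares each $P_i$ with the common partition.

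The gap is in the step you flag as the main obstacle, and the mechanism you propose for it cannot work. Take the configuration you want to exclude: a component of $\{\tilde w^1_{a'}>0\}$ ending at a point $t_0$ interior to a component of $\{\tilde w^2_{a'}>0\}$. There the atom of $Dw^1_{a'}$ at $t_0$ meets no atom of $Dw^2_{a'}$; indeed there need not be an atom at all, since $w^1_{a'}$ may decay continuously to $0$. So at $t_0$ the total variation of $Dw_{a'}$ is exactly the weighted sum of those of $Dw^1_{a'}$ and $Dw^2_{a'}$. Nothing cancels, and no strengthened form of (a)--(b) gives a contradiction. No-cancellation only forbids the terminal atom of one piece sitting on the initial atom of another, which is what enters \eqref{disjointdomains}. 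Your fallback of building a tangled representation is not specified, and in this configuration no two curves cross.

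The missing idea is this. $\eeta_1$, $\eeta_2$ and $\eeta$ represent the same density $\rho$ and are concentrated on the same untangled set, whose glued curves have pairwise disjoint graphs, namely the fibres of one Borel map $P'$. Write ${\bf m}_i:=(\pi_{P'})_\#\eeta_i=g_i\,{\bf m}$. The uniqueness part of Theorem~\ref{thm:disinte}, applied to $\rho\Leb{d+1}$ and $P'$, then gives $g_1w^1_{a'}=g_2w^2_{a'}=w_{a'}$ for ${\bf m}$-a.e. $a'$. Consequences:
\begin{itemize}
\item $g_i>0$ a.e., so all the lower semicontinuous representatives have the same positivity set.
\item By \eqref{disjointdomains} and minimality of $P_i$, both families $\{I_{a_i}\}_{a_i\in U_i^{-1}(a')}$ are disjoint open intervals whose union is the single open set $\{\tilde w_{a'}>0\}$. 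They are therefore both its family of connected components, so the matching is immediate and mismatches never occur.
\item By \eqref{eqpartition3} and \eqref{eq:nocancellazioni} there is no cancellation inside each class, so $(Dw^i_{a'})^+(\R)=1$, which forces $g_1=g_2=1$.
\end{itemize}
You need the last point in your final sentence. $U_\#{\bf m}_P={\bf m}_{P'}$ only identifies ${\bf m}_{P_i}$ with $(\pi_{P'})_\#\eeta_i$, and you must know this is independent of $i$ before composing the two identifications.

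The paper relies on exactly this when it writes the two disintegrations of the same $\rho\Leb{d+1}$ before invoking Lemma~\ref{lemmasubsupports}. It takes $\Delta_1\cup\Delta_2$ as the common set and asserts that the induced partition is already minimal, so the last clause of that lemma gives singleton fibres directly. Your refinement into components avoids that assertion and is a legitimate variant, once the proportionality of the weights is in place.
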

\begin{proof} Consider two minimal partitions via characteristics $P_1$, $P_2$  and associated Lagrangian representations  $\eeta_1$, $\eeta_2$,
 respectively supported on $\sigma$-compact untangled sets of curves $\Delta_1$, $\Delta_2$. Since in Definition~\ref{defLagRep} the 
 dependence on the measure $\eeta$ is affine, convex combinations of $\eeta_1$ and $\eeta_2$ are still Lagrangian representations for 
 $\rho\BB$. Hence, with $\eeta=(\eeta_1+\eeta_2)/2$ we may find a $\sigma$-compact untangled set $\Delta_{12}$ where $\eeta$
 is concentrated. Hence, considering a $\mathcal{G}_\delta$ $\eeta$-negligible set $N\supseteq\Delta_1\cup\Delta_2\setminus\Delta_{12}$ 
 we obtain that
 $$\left(\Delta_1 \cup \Delta_2 \right)\setminus N \,\,\,\textrm{is untangled and induces a minimal partition via characteristics for $\rho\BB$.}$$
 Note that, by Lemma~\ref{lem_partizionieinsiemidicurve}, subtracting $N$ will not change the partitions induced by the sets $\Delta_i$, so we may assume, possibly replacing $\Delta_i$ by $\Delta_i\setminus N$, that $\left(\Delta_1 \cup \Delta_2 \right)$ is untangled and that the equivalence relation 
$\pi_{\Delta_1\cup\Delta_2}$ produces a minimal partition via characteristics $P'$.
 Let $U:\mathcal{A}\to \mathcal{A}'$ be as in Lemma~\ref{lemmasubsupports} 
 (with $\Delta=\Delta_1$, $\Delta'=\Delta_1\cup\Delta_2$) be such that $\pi_{P'}=U\circ \pi_{P_1}$.
 Considering as in \eqref{eqpartition1} the disintegrations
 $$\rho \mathscr{L}^{d+1} = \int_{\mathcal{A'}} (\mathrm{Id}, \sigma_{a'})_\# (w_{a'} \mathscr{L}^1) \, d{\bf m}_{P'}(a'),\quad
 \rho \mathscr{L}^{d+1} = \int_{\mathcal{A}} (\mathrm{Id}, \sigma_a)_\# (w_a \mathscr{L}^1) \, d{\bf m}_{P_1}(a)$$
by Lemma~\ref{lemmasubsupports} we have that, since $\tilde{w}_{a'}\vert_{I_{a'}}>0$, that 
$U^{-1}(a')=\{a\}$ and $I_a =I_{a'}$ for ${\bf m}_{P'}$-almost every $a'\in \mathcal{A}'$. 
Hence $U$ is a measurable isomorphism from $(\mathcal{A}',{\bf m}_{P'})$ to $(\mathcal{A},{\bf m}_{P_1})$.
 The conclusion then follows by the analogous argument applied to the decomposition induced by $\pi_{\Delta_2}$,
 taking the composition of the measurable isomorphisms.
\end{proof}

Since we have established a suitable uniqueness property for minimal partitions via characteristics, 
we will henceforth denote such partitions by $P_{min}$.  From now on, we shall also say that all Lagrangian representations related to $\bb$ are untangled if, for any bounded open interval $\mathcal{I}$ and any $\rho\in\mathcal{R}^{\mathcal{I}}_+$ such that $D\cdot (\rho\BB)\in\Measures{\R^{d+1}}$, every Lagrangian representation of $\rho\BB$ is untangled. Notice that, by Theorem \ref{teo_Bia_Bo}, this assumption holds for the class of $BV$ vector fields  considered.

\begin{proposition}\label{Untangledsupportofunion}
Let $\bb$ be such that all Lagrangian representations related to $\bb$ are untangled.
Let $\rho,\rho' \in \mathcal{R}^{\mathcal{I}}_+$ for some bounded open interval $\mathcal{I}$ with $D \cdot (\rho\BB) = \mu \in \Measures{\mathbb{R}^{d+1}}$ and $D \cdot (\rho'\BB) = \mu' \in \Measures{\mathbb{R}^{d+1}}$ and let $\eeta,\,\eeta'$ be Lagrangian representations for $\rho\BB,\,\rho'\BB$ concentrated on Borel sets $\Delta,\,\Delta'\subseteq \Upsilon$ respectively. 
Then $\Delta\cup\Delta'$ is untangled up to a set of negligible $(\eeta+\eeta')$-measure. 
\end{proposition}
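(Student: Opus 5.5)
The plan is to produce a single Lagrangian representation whose concentration set covers, up to negligible sets, both $\Delta$ and $\Delta'$, and then apply the standing untanglement hypothesis to it. The natural candidate is $\eeta+\eeta'$, but Remark~\ref{invaria_rep} warns that it need not represent $(\rho+\rho')\BB$: in \eqref{eq:nocancellazioni} cancellations between endpoints of curves of $\eeta$ and of $\eeta'$ may occur. I will therefore replace $\eeta+\eeta'$ by the Lagrangian representation of $(\rho+\rho')\BB$ furnished by Proposition~\ref{extended_lagrangian}, and compare the two.

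First I check that $\rho+\rho'\in\mathcal R^{\mathcal I}_+$ and $D\cdot((\rho+\rho')\BB)=\mu+\mu'\in\Measures{\R^{d+1}}$. This is immediate, and Proposition~\ref{extended_lagrangian} then gives a Lagrangian representation $\eeta''$ of $(\rho+\rho')\BB$. By hypothesis $\eeta''$ is untangled, so it is concentrated on a $\sigma$-compact untangled set $\Delta''$. This does not yet say anything about $\Delta\cup\Delta'$. The bridge I need is that $\Delta$ and $\Delta''$ are jointly untangled up to negligible sets.

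To get that, I would prove that any two curves $\gamma\in\Delta$, $\bar\gamma\in\Delta''$ forming a tangled pair (i.e.\ $(\gamma,\bar\gamma)\notin\mathring W$) can exist only on an $\eeta\otimes\eeta''$-negligible set. The tool is a Kellerer-type argument analogous to the proof of Theorem~\ref{thm:main}(iii). Suppose the tangled pairs have positive measure. Using the switching operator at a crossing, I build a new measure on curves: take a portion of $\eeta$, cut each curve at the crossing time, and glue it with the complementary piece of the partner curve from $\eeta''$. I would use curve pieces rather than whole curves, so that this reuses the same graph mass. The result is a Lagrangian representation of some $\tilde\rho\BB$ with $\tilde\rho\le\rho+\rho''$ that is not untangled, or it can be combined into a representation of a bounded density that contains a tangled pair with positive mass. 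Either outcome contradicts the hypothesis. Endpoint creation at cut points cancels in pairs, so \eqref{eq:concancellazioni} holds for the new measure; \eqref{eq:nocancellazioni} holds too, with $\mu$ replaced by the divergence of the new field, if the cuts are chosen consistently.

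A cleaner alternative, which I would try first, avoids cutting altogether. Take $\hat\eeta=\eeta\res\Delta+\eeta''$ restricted to a suitable subfamily such that the density stays in $\mathcal R^{\mathcal I}_+$. Decompose its current via \cite[Theorem 1.4]{ARV} as in Proposition~\ref{extended_lagrangian}. Then observe that the acyclic decomposition splits any tangled crossing, because the mass identity \eqref{masseq} forces every representing measure to be concentrated on curves following $\rho\BB$ along the graph.

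The same reasoning applied to $\Delta'$ and $\Delta''$ shows that $\Delta\cup\Delta''$ and $\Delta'\cup\Delta''$ are untangled up to negligible sets. Since both $\Delta$ and $\Delta'$ are then "compatible" with $\Delta''$, and graphs of $\Delta''$ carry full $(\rho+\rho')\Leb{d+1}$ measure, any tangled pair $(\gamma,\gamma')\in\Delta\times\Delta'$ must cross at a point lying on some curve of $\Delta''$. That curve would then be tangled with $\gamma$ or with $\gamma'$, except on a negligible set. This gives untanglement of $\Delta\cup\Delta'$ up to an $(\eeta+\eeta')$-negligible set.

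The main obstacle is the middle step: rigorously producing, from a positive-measure family of tangled pairs, a genuine Lagrangian representation that violates untanglement. In particular the hard part is keeping \eqref{eq:nocancellazioni} and the $L^\infty$ density bound, together with Borel measurability of crossing times, which I would handle via Lemma~\ref{lem:Borel}-type closed sets and Lusin-compact restrictions.
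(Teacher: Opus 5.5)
There is a genuine gap. You correctly identify the obstruction, namely possible cancellations in \eqref{eq:nocancellazioni} for $\eeta+\eeta'$ (Remark~\ref{invaria_rep}), but the proposal does not overcome it.

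\emph{The reduction is circular.} Your bridge step, that $\Delta\cup\Delta''$ is untangled up to negligible sets, is literally the statement applied to the pair $\rho$, $\rho+\rho'$ with representations $\eeta$, $\eeta''$. Since $\eeta$ is already untangled by hypothesis, knowing that $\eeta''$ is untangled adds nothing.

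\emph{The proposed tools do not work as described.} The only way to use the hypothesis is to exhibit a genuine Lagrangian representation charging a tangled pair with positive mass. Nothing forces the switched curves ${\sf Sw}(\gamma,\bar\gamma)$ to form a tangled family. As soon as you add back curves of $\eeta$ to see a tangled pair, you face the same possible cancellations as for $\eeta+\eeta'$. Cutting a curve at an interior time and keeping both pieces creates opposite Dirac masses at the cut point. These cancel in \eqref{eq:concancellazioni} but add up on the right-hand side of \eqref{eq:nocancellazioni}, while the density, and hence $|\mu|$, is unchanged. So such a measure is never a Lagrangian representation, contrary to your claim that \eqref{eq:nocancellazioni} survives if the cuts are chosen consistently. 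The ARV alternative only produces some new representation of the summed current. It is untangled by hypothesis, but it may charge different curves and says nothing about those charged by $\eeta$.

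\emph{The last step is incomplete.} The set $\mathscr G(\Delta'')$ has full $(\rho+\rho')\Leb{d+1}$-measure. However, the crossing points of a positive-measure family of pairs can lie in an $\Leb{d+1}$-negligible set, for instance a single time slice, so they need not lie on curves of $\Delta''$.

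The missing idea is to \emph{restrict} rather than switch or glue, so that all new endpoints sit on two fixed time slices with opposite signs. Choose intervals $\mathcal I_n=(t_n^-,t_n^+)$ with endpoints in a countable dense subset of $\mathcal I$. Let $T_n$ be the set of curves whose domain contains $\mathcal I_n$, and $R_n$ the restriction to $\mathcal I_n$. Set $\eeta_n=(R_n)_\#(\eeta\res T_n)$ and $\eeta'_n=(R_n)_\#(\eeta'\res T_n)$. The positive parts of both divergences then live on $\{t=t_n^-\}$ and the negative parts on $\{t=t_n^+\}$, so no cancellation can occur. Hence $\eeta_n+\eeta'_n$ is a Lagrangian representation of $(\rho_n+\rho'_n)\BB$ with $\rho_n+\rho'_n\le\rho+\rho'$, and is therefore concentrated on an untangled set $\Delta_n$. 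Now discard the $(\eeta+\eeta')$-negligible set $\bigcup_n R_n^{-1}(\Upsilon\setminus\Delta_n)$. Two remaining curves splitting at a time $\bar t$ would restrict, on some $\mathcal I_n\ni\bar t$ contained in both domains, to two curves of $\Delta_n$ splitting at $\bar t$, a contradiction. This argument needs no $\eeta''$, no duality and no switching.
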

\begin{proof} 

Fix a countable dense family $\{t_m\}_{m \in \mathbb{N}} \subset \mathcal{I}$
and consider the associated family of intervals $\{\mathcal{I}_n\}_{n \in \mathbb{N}}$ having these points as endpoints, we write
$\mathcal{I}_n = (t_n^-, t_n^+)$.

Let $T_n := \{ (t_\gamma^-, t_\gamma^+, \gamma) \in \Upsilon : \mathcal{I}_n \subseteq (t_\gamma^-, t_\gamma^+) \}$ and let $R_n : T_n \to \Upsilon$ be defined as $R_n(t_\gamma^-, t_\gamma^+, \gamma) = (t_n^-, t_n^+, \gamma)$.
Set $\eeta_n:=R_{n\#}(\eeta\res T_n)$ and  $\eeta'_n:=R_{n\#}(\eeta'\res T_n)$ respectively concentrated on $R_n(\Delta\cap T_n)$ and $R_n(\Delta'\cap T_n)$. A straightforward verification shows that the densities $\rho_n$ and $\rho'_n$, defined via the identities $\rho_n (t,\cdot) \Leb{d} = (e_t)_{\#} \eeta_n$ and $\rho'_n (t,\cdot) \Leb{d} = (e_t)_{\#} \eeta'_n$, satisfy the property that $\mu_n:=D\cdot (\rho_n \BB)$ and $\mu_n':=D\cdot (\rho'_n \BB)$ belong to
$\Measures{\R^{d+1}}$. Furthermore, the inclusions 
$$
\text{spt}(\mu_n^-) \cup \text{spt}(\mu_n '^-) \subseteq \{t = t_n^+\}\quad\text{and}\quad
\text{spt}(\mu_n^+) \cup \text{spt}(\mu_n'^+) \subseteq \{t = t_n^-\}
 $$ 
 imply that no cancellations occur at the level of the divergence. Consequently, $\eeta_n + \eeta'_n$ constitutes a Lagrangian representation for $\rho_n + \rho'_n$, the assumption on $\bb$ then implies that it has to be untangled. Hence, there exist $\sigma$-compact untangled
 sets $\Delta_n\subseteq\Upsilon$ where $\eeta_n+\eeta_n'$ is concentrated and with $N_n:=\Upsilon\setminus \Delta_n$ one has
 $(\eeta + \eeta')(R_n^{-1}(N_n)) =(\eeta_n+\eeta_n')(N_n)= 0$.  It follows that $\eeta+\eeta'$ is concentrated on the set $\Delta\cup \Delta'\setminus\bigcup_n R_n ^{-1} (N_n)$, and since $\bigcup_n T_n=\Upsilon$ one has
 $$
 (\Delta\cup\Delta')\setminus\bigcup_n R_n ^{-1} (N_n)=(\Delta\cup\Delta')\cap\bigcap_n \left(R_n^{-1}(\Delta_n)\cup T_n^c\right).
 $$
 Since each $\Delta_n$ is untangled, it follows that the set 
$\Delta \cup \Delta' \setminus \bigcup_n R_n^{-1}(N_n)$ is also untangled. 
Indeed, suppose by contradiction that there exist two elements 
$(t_\gamma^-, t_\gamma^+, \gamma)$ and $(t_{\gamma'}'^-, t_{\gamma'}'^+, \gamma') \in 
(\Delta \cup \Delta') \cap \bigcap_n R_n^{-1}(\Delta_n)$ 
which split at a time $\bar{t} \in (t_\gamma^-, t_\gamma^+) \cap (t_{\gamma'}'^-, t_{\gamma'}'^+)$. 
Then, there exists an $n \in \mathbb{N}$ such that 
$\bar{t} \in \mathcal{I}_n \subseteq (t_\gamma^-, t_\gamma^+) \cap (t_{\gamma'}'^-, t_{\gamma'}'^+)$. 
Consequently, we would have $R_n(t_\gamma^-, t_\gamma^+, \gamma)$,
$R_n(t_{\gamma'}'^-, t_{\gamma'}'^+, \gamma') \in \Delta_n$. 
This contradicts the fact that $\Delta_n$ is untangled, as these two elements split at $\bar{t}$.
\end{proof}

\begin{theorem}\label{BBL1BVexanduniqueness}
Let $\bb$ be such that all Lagrangian representations related to $\bb$ are untangled and let
$\rho \in {\mathcal R}^{\mathcal{I}}_+$ for some bounded open interval $\mathcal{I}$ with $D \cdot (\rho\BB) = \mu \in \Measures{\mathbb{R}^{d+1}}$. 
Let $P_{min}$ be the minimal partition via characteristics of $\rho\BB$ and let $\rho'$ be nonnegative with 
$\rho'\leq C\rho$ and $D \cdot (\rho'\BB) = \mu' \in \Measures{\mathbb{R}^{d+1}}$.
Then $P_{min}$ is a partition via characteristics for $\rho'\BB$. In particular, the curves of the minimal partition for $\rho'\BB$ can be assumed to be subcurves of those of $P_{min}$.
\end{theorem}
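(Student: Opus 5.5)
We fix a Lagrangian representation $\eeta$ of $\rho\BB$ concentrated on a $\sigma$-compact untangled set $\Delta$ inducing $P_{min}$, and a Lagrangian representation $\eeta'$ of $\rho'\BB$, which exists by Proposition~\ref{extended_lagrangian} since $\rho'\leq C\rho$ gives $\rho'\in\mathcal R^{\mathcal I}_+$. By assumption $\eeta'$ is untangled, concentrated on some $\sigma$-compact untangled $\Delta'$. Proposition~\ref{Untangledsupportofunion} then shows that $\Delta\cup\Delta'$ is untangled up to an $(\eeta+\eeta')$-negligible set. Removing a $\mathcal G_\delta$ negligible set $N$ (harmless by Lemma~\ref{lem_partizionieinsiemidicurve}), we may assume that $\Delta''=\Delta\cup\Delta'$ is $\sigma$-compact and untangled, and that both $\eeta$ and $\eeta'$ are concentrated on it. Let $P''$ be the partition it induces.

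Applying Lemma~\ref{lemmasubsupports} to $\Delta\subseteq\Delta''$ for the measure $\eeta$ gives a map $U$ with $\pi_{P''}=U\circ\pi_{P_{min}}$. Minimality of $P_{min}$ ($\tilde w_a>0$ on $I_a$) shows that $U$ is injective ${\bf m}_{P_{min}}$-a.e. with $I_a=I_{U(a)}$. This is the same argument as in the uniqueness proposition for minimal partitions, except that here $P_{min}$ sits inside the larger partition rather than being compared symmetrically. The key point is that $\rho'\leq C\rho$ prevents $\eeta'$-curves from living on $P''$-classes that carry no $\eeta$-mass. Indeed, disintegrating $\rho'\Leb{d+1}\leq C\rho\Leb{d+1}$ along $P''$ shows that ${\bf m}_{P''}^{\eeta'}\ll {\bf m}_{P''}^{\eeta}$ on the relevant set, and that $w'_{a''}\leq C' w_{a''}$ after normalization. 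Consequently $\eeta'$-a.e. curve is contained, up to endpoints, in the set $\{\tilde w_{a''}>0\}$ of its class, which by \eqref{disjointdomains} is the graph of a single $\sigma_a$ with $a\in\mathcal A$. It follows that $P_{min}$, composed with $U^{-1}$, is defined on $\mathscr G(\Delta')$ up to negligible sets, and that $\pi_{P_{min}}$ is well defined and Borel on $\Delta'$.

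It then remains to verify items (2)--(4) of Definition~\ref{def:partition_characteristics} for $\eeta'$ with the map $P_{min}$. We disintegrate $\eeta'$ with respect to $\pi_{P_{min}}$, obtaining $\eeta'_a$ and densities $w'_a$ as in \eqref{eq:formulawa}. Identity \eqref{eqpartition1} follows from the first component of \eqref{rhoBBrep}. The relation $Dw'_a=\mu'_a$ holds because each $w'_a$ is an average of indicators of intervals $(t^-_\gamma,t^+_\gamma)$. Both identities in \eqref{eqpartition3} follow by integrating \eqref{eq:concancellazioni} and \eqref{eq:nocancellazioni} for $\eeta'$ along the disintegration, since these hold for the Lagrangian representation $\eeta'$ itself, with no sum involved. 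The final assertion follows by refining $P_{min}$ to the connected components of $\{\tilde w'_a>0\}$, which are subintervals of $I_a$; this is exactly the construction of the minimal partition.

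I expect the main obstacle to be the second step: showing rigorously that $\eeta'$-curves do not glue together distinct $P_{min}$-classes, that is, that their graphs lie in the closure of a single $\sigma_a$. The subtle cases are curves of $\Delta'$ passing through endpoints of $I_a$ where $\tilde w_a$ vanishes. To handle these I would use the domination $\rho'\leq C\rho$ along each $\sigma_{a''}$, namely $w'_{a''}\leq Cw_{a''}$ a.e. Together with the lower semicontinuity of the representatives, this shows that no $\eeta'$-curve of positive measure can cross a zero of $\tilde w_{a''}$ in the interior of its domain, since such a crossing would force $w'_{a''}>0$ near a point where $w_{a''}=0$.
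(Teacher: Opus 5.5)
Your overall route is sound and uses the same mechanism as the paper: domination of densities passes to pointwise domination of the lower semicontinuous weights along a common partition, and this is combined with \eqref{disjointdomains} and connectedness. The organization differs, though.

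\begin{itemize}
\item The paper builds the minimal partition $P''_{min}$ of $\Delta\cup\Delta'$ for the sum $\eeta+\eeta'$. It uses $\rho\geq(\rho+\rho')/(C+1)$ to get $k(c)\sum_{a\in U^{-1}(c)}p_c(a)\tilde w_a\geq\tilde w''_c/(C+1)>0$ on all of $I''_c$. Hence each $P''_{min}$-class is a single $P_{min}$-class, so $P_{min}\cong P''_{min}$, and $P''_{min}$ serves $\rho'\BB$.
\item You stay on the partition $P''$ induced by $\Delta\cup\Delta'$ and compare the $\eeta'$-weights directly with the $\eeta$-weights through $\rho'\leq C\rho$. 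You show that $\eeta'$-curves never straddle a zero of $\tilde w_{a''}$. You then check (2)--(4) of Definition~\ref{def:partition_characteristics} for $\eeta'$ by hand, correctly noting that \eqref{eqpartition3} comes from $\eeta'$ alone, so the cancellation issue for sums of representations never arises.
\end{itemize}

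The paper's version gives the cleaner identification $P_{min}\cong P''_{min}$, but it needs the minimal partition of a sum of representations. Yours avoids that object and yields exactly the containment the statement requires.

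One step, however, is wrong as written. Minimality of $P_{min}$ does \emph{not} make $U$ injective with $I_a=I_{U(a)}$. The last clause of Lemma~\ref{lemmasubsupports} gives singleton fibres only when the weight of the \emph{larger} partition is positive on $I_{a''}$; in the uniqueness proposition it is the minimality of $P'$ that is used, not that of the finer partition.

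For the partition induced by $\Delta''$ this fails in general. It fails already when $\Delta'\subseteq\Delta$: the partition induced by $\Delta$ is the maximal one, and $P_{min}$ genuinely refines it. Moreover, curves of $\Delta'$, possibly $\eeta'$-negligible ones, can glue distinct $P_{min}$-classes across zeros of $\tilde w$. So $\{\tilde w_{a''}>0\}$ is the disjoint union $\bigcup_{a\in U^{-1}(a'')}I_a$, not the graph of a single $\sigma_a$, and ``$P_{min}$ composed with $U^{-1}$'' has no meaning.

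Drop that claim. What you actually need, and what your last paragraph supplies, is the following: for $\eeta'$-a.e. $\gamma$, the interval $(t^-_\gamma,t^+_\gamma)$ lies in $\{\tilde w'_{a''}>0\}\subseteq\{\tilde w_{a''}>0\}$, hence by connectedness in exactly one $I_a$. Then set $\pi_{P_{min}}(\gamma):=P_{min}(\tau_\gamma,\gamma(\tau_\gamma))$ on that $\eeta'$-full set.

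Also, ``no curve of positive measure'' is the wrong formulation, since $\eeta'_{a''}$ may be diffuse. Argue instead as follows. The function $w'_{a''}(t)=\eeta'_{a''}(\{\gamma:\ t\in(t^-_\gamma,t^+_\gamma)\})$ is lower semicontinuous, so $\{w'_{a''}=0\}$ is closed. Testing on a countable dense subset of this set shows that $\eeta'_{a''}$-a.e. curve has open domain disjoint from it.
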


\begin{proof}  Note that, by setting $\rho+\rho' =u\rho$, we have $1\leq u\leq C+1$ on $\{\rho> 0\}$.
 Let $\eeta$, $\eeta'$ be untangled Lagrangian representations for $\rho\BB$ and $\rho'\BB$ concentrated on the sets $\Delta$ and $\Delta'$ and
 the induced minimal partitions $P_{min}$, $P'_{min}$, respectively.
 We may then write
$$\rho \mathscr{L}^{d+1} = \int_{\mathcal{A}} (\mathrm{Id},\sigma_a)_\# (w_a \mathscr{L}^1) \, d{\bf m}_{P_{min}}(a),\quad
\rho' \mathscr{L}^{d+1} = \int_{\mathcal{A}'} (\mathrm{Id},\sigma_{a'})_\# (w_a'\mathscr{L}^1) \, d{\bf m}_{P'_{min}}(a').$$
By Proposition~\ref{Untangledsupportofunion}, the set $\Delta \cup \Delta'$ is untangled up to a set of negligible $(\eeta + \eeta')$-measure,
hence, by Lemma~\ref{lem_partizionieinsiemidicurve}, applied separately on $\eeta$ and $\eeta'$, 
we may assume, without changing notation, that it is an untangled set. Starting from the identity 
$$u\rho\BB\Leb{d+1}=\int_\Upsilon(\mathrm{Id}, \gamma)_\#\left((1,\gamma') \Leb{1}\right)\,d(\eeta+\eeta')(\gamma) $$
taking into account Remark~\ref{eta+eta'} it is then possible to carry out the construction in Section~\ref{sec_parvia},
 producing a minimal partition 
$P''_{min}:\mathscr{G}(\Delta\cup\Delta')\to\mathcal{A}$ induced by $\Delta \cup \Delta'$ such that 
$$u\rho \Leb{d+1} = \int_{\mathcal A''} (\mathrm{Id}, \sigma_c'')_\# (w_c'' \Leb{1}) \, d{\bf m}''_{P''_{min}}(c),$$
with ${\bf m}''_{P''_{min}}=(\pi_{P''_{min}})_\#(\eeta+\eeta')$. Here the function $w''_c$, defined as in \eqref{eq:formulawa},
has finite pointwise variation for ${\bf m}''_{P''_{min}}$-almost every $c\in\mathcal A''$ and hence admits a lower semicontinuous representative
$\tilde{w}_c''$; the minimality of the partition grants that $\tilde{w}_c''>0$ only on an interval $I_c''$. Note that, in general, $P_{\min}''$ is not a partition via characteristics for $u\rho\BB$, although, being the untangled set larger, it is for both $\rho\BB$ and $\rho'\BB$.
Apply now Lemma~\ref{lemmasubsupports} to $\rho\BB$, with $P=P_{min}$ and $P'=P_{min}''$  to find
$U:\mathcal A \to \mathcal A''$ such that $\pi_{P''_{min}}=U\circ \pi_{P_{min}}$, 
and define the measure ${\bf m}_{P''_{min}}=~(\pi_{P''_{min}})_\# \eeta$.
Since  ${\bf m}_{P''_{min}} \leq {\bf m}''_{P''_{min}}$, we may write  ${\bf m}_{P''_{min}}=k \,{\bf m}''_{P''_{min}}$, with $k\in L^1(\mathcal{A}'', {\bf m}''_{P''_{min}})$ nonnegative. Applying Lemma~\ref{lemmasubsupports} we obtain
\begin{equation*}
\begin{split}
\rho \Leb{d+1} &= \int_{\mathcal{A}''} (\mathrm{Id}, \sigma_c'')_\# \sum_{a\in U^{-1}(c)} (p_c(a) w_a\Leb{1}) \, d{\bf m}_{P''_{min}}(c) \\
&=  \int_{\mathcal{A}''} (\mathrm{Id}, \sigma_c'')_\# \left(k(c)\sum_{a\in U^{-1}(c)} p_c(a) w_a  \Leb{1}\right) \,d{\bf m}''_{P''_{min}}(c).\end{split}
\end{equation*}
It follows that
$$
\int_{\mathcal{A}''} (\mathrm{Id}, \sigma_c'')_\# \left(k(c)\sum_{a\in U^{-1}(c)} p_c(a) w_a  \Leb{1}\right) \,d{\bf m}''_{P''_{min}}(c)\geq
\frac{1}{C+1}\int_{\mathcal A''} (\mathrm{Id}, \sigma_c'')_\# (w_c'' \Leb{1}) \, d{\bf m}''_{P''_{min}}(c),
$$
hence for ${\bf m}''_{P''_{min}}$-almost every $c\in \mathcal A''$ and every $t\in I''_c$
$$0<\frac{1}{C+1}\tilde{w}''_c(t)\leq k(c)\sum_{a\in P^{-1}(c)} p_c(a) \tilde{w}_a(t) $$
which implies by Lemma~\ref{lemmasubsupports} that there exists a single $a\in \mathcal A$ such that $\sigma_a=\sigma''_c$. That is, up to a bijection of $\mathcal{A}$, we have $P_{min}=P_{min}''$, which means that
$P_{min}$ is also a partition via characteristics for $\rho'$.\qedhere
 
\end{proof}

From Theorem~\ref{BBL1BVexanduniqueness} we can infer a direct proof of the well-posedness for the continuity equation.
Notice that the role of the ``dominating'' density $\bar\rho$ with $D\cdot(\bar\rho\BB)\in\Measures{\R^{d+1}}$ in the proof
is only instrumental for the determination of a good set of curves.

\begin{theorem}[Uniqueness for (CE)]\label{uniqueness_rho'}
Let $\bb$ be such that all Lagrangian representations related to $\bb$ are untangled and let
$\rho\in\mathcal{R}^{(0,T)}_+$  such that $D \cdot (\rho\BB) = \mu \in \Measures{\mathbb{R}^{d+1}}$ 
and
$$
{\rm spt} \, \mu \subseteq \{t=0\} \cup \{t=T\}
$$
then
\begin{equation}\label{eq1unicitasoluzioniCE}
        \mu^+ =\mu\res\{t=0\},\qquad\qquad \mu^- =-\mu\res\{t=T\}.
\end{equation}
Moreover,  in the strip $(0,T) \times \R^d$, the function $\rho$ is uniquely determined by 
the inner trace $\trplus{\rho\BB}{(1,0)}$ on $\Sigma=\{t=0\}=\partial\{t>0\}$. 
\end{theorem}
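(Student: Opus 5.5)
The plan is to work entirely through the minimal partition via characteristics of $\rho\BB$ and reduce both claims to one-dimensional statements on the curves $\sigma_a$. Let $\eeta$ be a Lagrangian representation of $\rho\BB$; it exists by Proposition~\ref{extended_lagrangian}. It is untangled by assumption, so the construction of Section~\ref{sec_parvia} produces the minimal partition $P_{min}$, with densities $w_a$ and measures $\mu_a=Dw_a$ satisfying \eqref{eqpartition1}--\eqref{eqpartition3}.

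For \eqref{eq1unicitasoluzioniCE} I would use \eqref{eq:concancellazioni} and \eqref{eq:nocancellazioni}. Since $|\mu|$ is represented without cancellations, $\eeta$-a.e.\ curve $\gamma$ has both endpoints $(t_\gamma^\pm,\gamma(t_\gamma^\pm))$ in ${\rm spt}\,\mu\subseteq\{t=0\}\cup\{t=T\}$. Because $\rho$ vanishes outside $(0,T)$, the curves live in $[0,T]$, and they have positive length in time. Hence $t_\gamma^-=0$ and $t_\gamma^+=T$ for $\eeta$-a.e.\ $\gamma$. The positive part of $\mu$ therefore comes only from initial points and lies on $\{t=0\}$, while the negative part lies on $\{t=T\}$. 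The no-cancellation identity forces $\mu^+=\int\delta_{(0,\gamma(0))}d\eeta$ and $\mu^-=\int\delta_{(T,\gamma(T))}d\eeta$, which is exactly \eqref{eq1unicitasoluzioniCE}.

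For the uniqueness part, let $\rho^1,\rho^2\in\mathcal R_+^{(0,T)}$ be two such solutions with the same inner trace $\bar\rho$ on $\{t=0\}$. By \eqref{eq1unicitasoluzioniCE} and Remark~\ref{rem_negative_times}, $\mu^{i}\res\{t=0\}=\bar\rho\Haus{d}$ for both. Set $\rho=\rho^1+\rho^2$; it satisfies the same hypotheses, and $\rho^i\le\rho$. Theorem~\ref{BBL1BVexanduniqueness} with $C=1$ then shows that the minimal partition $P_{min}$ of $\rho\BB$ is a partition via characteristics for both $\rho^i\BB$. This gives
\[
\rho^i\Leb{d+1}=\int_{\mathcal A}(\mathrm{Id},\sigma_a)_\#(w^i_a\Leb{1})\,d{\bf m}^i(a),
\]
with $Dw^i_a=\mu^i_a$. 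Applying the first part to each $\rho^i$ shows that $\mu^i_a$ charges only $t=0$ (positively) and $t=T$ (negatively). Hence $w^i_a$ is constant on $I_a\cap(0,T)$, and the constant equals the mass of the initial Dirac at $(0,\sigma_a(0))$. Writing ${\bf m}^i=k_i{\bf m}$ relative to a common measure, the one-dimensional density $k_i(a)w^i_a$ is determined by $k_i(a)\mu^{i}_a(\{0\})$.

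The final step disintegrates $\mu^i\res\{t=0\}=\bar\rho\Haus{d}$ along $P_{min}$ using \eqref{eqpartition3}. Since $\sigma_a$ meets $\{t=0\}$ in at most one point, the map $a\mapsto(0,\sigma_a(0))$ transports $k_i(a)\mu^i_a(\{0\})\,{\bf m}(a)$ to $\bar\rho\Haus{d}$. The main obstacle is to show that this map is injective for ${\bf m}$-a.e.\ $a$, so that the data on $\{t=0\}$ determine $k_i(a)\mu_a^i(\{0\})$ for ${\bf m}$-a.e.\ $a$. I would first try to get injectivity from untanglement: two distinct minimal-partition curves starting at the same point of $\{t=0\}$ and both living on $(0,T)$ would need to be glued or disjoint. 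If that is not directly available, I would use the minimality of the partition together with Lemma~\ref{lemmasubsupports} to rule out a positive-measure set of curves sharing an initial point. Once injectivity holds, $k_1w^1_a=k_2w^2_a$ a.e., and \eqref{eqpartition1} gives $\rho^1=\rho^2$ in $(0,T)\times\R^d$.
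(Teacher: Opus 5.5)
Your proof of \eqref{eq1unicitasoluzioniCE} is the paper's argument. Passing to the minimal partition of the sum via Theorem~\ref{BBL1BVexanduniqueness} is also the paper's route. The gap is exactly the step you flag, the injectivity of $a\mapsto\sigma_a(0)$, and neither of your remedies supplies it.

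Untanglement (Definition~\ref{defuntangled}, via $W_1,W_2$) only constrains intersections of graphs over the \emph{open} domains $I_\gamma$. Your curves live on $I_a=(0,T)$, so $t=0$ is an endpoint. A fan of curves all issuing from the same point $(0,x)$ and pairwise disjoint on $(0,T)$ is therefore untangled, its members are pairwise non-equivalent, and it is compatible with minimality ($\tilde w_a>0$ on each $I_a$). Lemma~\ref{lemmasubsupports} only compares partitions induced by nested untangled sets and says nothing about curves sharing an endpoint. If such fans carry positive ${\bf m}$-measure, the trace $\bar\rho\Haus{d}$ prescribes only the total initial mass of each fan, not how it is split among the curves, so $\rho^1\neq\rho^2$ cannot be ruled out. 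Structurally, you invoke the untanglement hypothesis only for densities in $\mathcal R^{(0,T)}_+$, and used only there it cannot detect branching at the endpoint $t=0$. This is the instantaneous branching of Depauw's example, where $\bb\in L^1_{\rm loc}((0,1];BV)$.

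The missing idea is to make $t=0$ an interior time. This uses that the hypothesis holds on every bounded open interval and that $\bb=0$ for $t<0$.
\begin{itemize}
\item By \eqref{eq1unicitasoluzioniCE}, $(\mu^i)^+=\bar\rho\Haus{d}\res\{t=0\}$. Set $\tilde\rho^i:=\rho^i+\uno_{(-1,0)}(t)\bar\rho(x)\in\mathcal R^{(-1,T)}_+$, so that $D\cdot(\tilde\rho^i\BB)=\bar\rho\Haus{d}\res\{t=-1\}-(\mu^i)^-$.
\item By your first step, every Lagrangian representation of $\tilde\rho^i\BB$ is concentrated on curves defined on $(-1,T)$.
\item Take the minimal partition of $\tilde\rho^1+\tilde\rho^2$. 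By Theorem~\ref{BBL1BVexanduniqueness} it is a partition via characteristics for both $\tilde\rho^i\BB$, and its curves $\sigma_a$ are defined on $(-1,T)$.
\item Two of these curves passing through the same point at $t=0$ now meet at an interior point of curves of the untangled set inducing the partition. Hence they coincide on a nontrivial interval, are equivalent, and so are equal.
\item The restrictions $\sigma_a\vert_{(0,T)}$ therefore form a family disjoint on $[0,T)$. Each $\rho^i\BB$ has a Lagrangian representation $\eeta^i$ concentrated on it, and $\eeta^i=\int\delta_{\gamma_x}\,d(e_0)_\#\eeta^i(x)$, where $\gamma_x$ is the unique member starting at $x$ and $(e_0)_\#\eeta^i=\bar\rho\Leb{d}$ does not depend on $i$.
\end{itemize}
This gives $\rho^1=\rho^2$ directly, and your bookkeeping with $k_i$ and the constancy of $w^i_a$ becomes unnecessary.
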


\begin{proof}
Let $\eeta$ be a Lagrangian representation of $\rho\BB$. From \eqref{eq:nocancellazioni} we infer
\begin{equation}\label{idTimes}
(0,T)= (t_\gamma^-, t_\gamma^+)
\qquad\text{for $\eeta$-a.e. $\gamma$,}
\end{equation}
since otherwise $\mu$ would have mass on intermediate times. Then \eqref{eq1unicitasoluzioniCE} follows immediately from 
\eqref{eq:concancellazioni} and \eqref{idTimes}. For any $\rho$ as above, consider $\tilde{\rho}(t,x):=\rho(t,x)+\uno_{ (-1,0)}(t) \rho_0(x)$, where $\mu^+ =\rho_0 \mathcal{H}^d \res _{\{t=0\}}$. Observe that 
\begin{equation}    
D\cdot(\tilde{\rho}\BB)=\rho_0\mathcal{H}^d \res _{\{t=-1\}}-\mu^-.
\end{equation}
Arguing as before, for any $\tilde\eeta$  Lagrangian representation of $\tilde\rho\BB$, it holds
\begin{equation}\label{Eq:dominioprolungamentocurve-1T}
(-1,T)= (t_\gamma^-, t_\gamma^+)
\qquad\text{for $\tilde\eeta$-a.e. $\gamma$,}
\end{equation}

Now, let $\rho'$ be any other solution to the continuity equation with the same initial condition, let 
$\bar\rho=\tilde\rho+\tilde\rho'$ and let $P_{min}$ be the unique minimal partition via characteristics of $\bar\rho\BB$ 
induced by an untangled family $\Delta$.
By Theorem~\ref{BBL1BVexanduniqueness}, $P_{min}$ is a partition via characteristics for $\rho\BB$. This implies that $\eeta$ is concentrated on curves obtained by restricting to the interval of time $(0,T)$ the characteristics  $(\sigma_a)_{a\in\mathcal{A}}$ associated to $P_{min}$, which by \eqref{Eq:dominioprolungamentocurve-1T} are defined on the interval $I_a=(-1,T)$. 
This means, using the notation of Definition~\ref{def:partition_characteristics} for $P_{min}$, that $\eeta$ is concentrated on the set
\begin{equation}\label{inclusionspteta'}
    Z:=\bigcup_{a\in\mathcal A}\left\{ \sigma_a\vert_{(0,T)} \right\}.
    \end{equation}

    In particular, $\eeta$ is concentrated on a disjoint family of curves in $[0,T)$  (here we invoke the untanglement of the set 
    $\Delta$, see Definition~\ref{defuntangled}).
We can now conclude by noting that we can disintegrate $\eeta$ with respect to the evaluation at the time $t=0$ as
$$\eeta=\int \delta_{\gamma_x}\,d(e_0)_\#\eeta(x)$$
where $\gamma_x$ is the unique curve in $Z$ (and there is one, for $(e_0)_\#\eeta$-a.e. $x$), such that $\gamma_x (0)=x$. 
Then, being $\rho_t \mathcal{L}^d=(e_t)_\# \eeta$ for all $t \in (0,T)$, we have that $\rho$ is uniquely determined by the measure 
$(e_0)_\# \eeta$, which in turn is given by
$$(e_0)_\#\eeta=\trplus{\rho\BB}{(1,0)}\mathscr{H}^d\res\Sigma,$$
as an easy consequence of \cite[Lemma 5.6]{Bia_Bo_2020} (see also Remark~\ref{rem_negative_times}).
 Since the argument is symmetric, reversing the roles of $\rho$ and $\rho'$ we obtain that $\rho=\rho'$.
\end{proof}

We now show that Theorem \ref{uniqueness_rho'} can also be used to establish uniqueness for a certain notion of solution to the transport equation.
More precisely, we introduce a suitable notion of one-sided nearly incompressible vector field, allowing the density $\rho $ to vanish.
In analogy with the nearly incompressible case, we then define the corresponding notion of $\rho$-weak solution to the transport equation and prove uniqueness for a given initial datum (prescribed in the distributional sense as in \eqref{eq:initial_cond_we}, see also Remark~\ref{rem_negative_times}).

\begin{definition}[One sided nearly incompressible fields $\bb$ and $\rho$-weak solutions]\label{def1sidednearlyincompressible}
A vector field $\bb: (0,T)\times\R^d\to \R^d$ satisfying \eqref{eq:growth} is called one-sided nearly incompressible if there exists $\rho\in\mathcal{R}^{(0,T)}_+$, $\rho\not\equiv0$, satisfying
$D\cdot (\rho\BB)=0$ in $(0,T)\times\R^d$. \\
 A function $u\in L^\infty(\R^{d+1})$ is said to be a $\rho$-weak solution of  the transport equation if
   $$D\cdot (u\rho\BB)=0\qquad\text{in $(0,T)\times\R^d$}.$$
\end{definition}
Clearly, uniqueness for $\rho$-weak solutions is possible (and will be intended) only up to modifications in $\rho \mathcal{L}^{d+1}$-negligible sets.
\begin{corollary}[Uniqueness]
 Let $\bb$ be such that all Lagrangian representations related to $\bb$ are untangled and let $\rho\in {\mathcal R}^{(0,T)}_+$ as in Definition~\ref{def1sidednearlyincompressible}.
 Then, for any $u_0 \in L^\infty(\R^d)$, any $\rho$-weak solution $u$  such that
 \begin{equation}\label{eq:initial_cond_we}
 D\cdot (u\rho\BB)=u_0 \rho_0\Haus{d}\res \{t=0\}\qquad\text{in $(-\infty,T)\times\R^d$}
 \end{equation}
 is uniquely determined $\rho\mathcal{L}^{d+1}$-a.e. . 

\end{corollary}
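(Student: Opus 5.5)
The plan is to reduce to Theorem~\ref{uniqueness_rho'} by splitting $u$ into its positive and negative parts with respect to a shifted level, so that everything becomes a nonnegative density dominated by $\rho$. Since $u\in L^\infty$, fix $M>\|u\|_\infty$ and set $v:=u+M\geq 0$, $v_0:=u_0+M\ge0$. By linearity,
$$
D\cdot(v\rho\BB)=D\cdot(u\rho\BB)+M\,D\cdot(\rho\BB)\quad\text{in $(-\infty,T)\times\R^d$.}
$$
To use the initial-datum formulation I first extend $\rho$ by $0$ for $t<0$ and identify $D\cdot(\rho\BB)$ in $(-\infty,T)\times\R^d$. By Proposition~\ref{prop:trivial} $\rho$ is weakly continuous with some trace $\rho_0$, so $D\cdot(\rho\BB)=\rho_0\Haus{d}\res\{t=0\}$ there; this is the $\rho_0$ appearing in \eqref{eq:initial_cond_we}. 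Hence $D\cdot(v\rho\BB)=v_0\rho_0\Haus{d}\res\{t=0\}$ in $(-\infty,T)\times\R^d$.

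The next step is to bring in the final time so as to match the hypotheses of Theorem~\ref{uniqueness_rho'}. The densities $\rho^1:=v\rho$ and $\rho$ are nonnegative, belong to $\mathcal R^{(0,T)}_+$ (with $v\rho\le 2M\rho$), and, after extending by zero for $t\ge T$, have divergences that are measures concentrated on $\{t=0\}\cup\{t=T\}$. The terms at $\{t=T\}$ come from the weak-continuity traces at $T$, obtained through Proposition~\ref{prop:trivial} applied to the equation satisfied by $v\rho$. Theorem~\ref{uniqueness_rho'} then says that $v\rho$ is uniquely determined in $(0,T)\times\R^d$ by its inner trace at $\{t=0\}$, namely $v_0\rho_0$, which depends only on $u_0$ and $\rho$. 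Consequently, if $u,u'$ are two $\rho$-weak solutions with the same $u_0$, then $(u+M)\rho=(u'+M)\rho$ a.e., i.e.\ $u=u'$ $\rho\Leb{d+1}$-a.e.

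The main obstacle is justifying that the data produced from \eqref{eq:initial_cond_we} are exactly those in Theorem~\ref{uniqueness_rho'}. Two points need checking. First, the inner trace $\trplus{v\rho\BB}{(1,0)}$ must equal $v_0\rho_0$; I would obtain this from the measure identity at $\{t=0\}$ together with Remark~\ref{rem_negative_times}, since the $(-\infty,0)$ side carries zero density. Second, $v\rho$ must be weakly continuous up to $T$ with an $L^\infty$ trace, so that the divergence on $\R^{d+1}$ is a measure. For this I would apply Proposition~\ref{prop:trivial} to $v\rho$, using $|v\rho|\le 2M\rho$ and the growth condition \eqref{eq:growth}. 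If extending past $T$ proves delicate, the fallback is to work on $(0,T')$ for every $T'<T$ and conclude by letting $T'\uparrow T$.

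Alternatively, one can run the argument directly with Theorem~\ref{BBL1BVexanduniqueness}. Taking the minimal partition of $\rho\BB$, the function $v\rho\le 2M\rho$ is disintegrated along the same characteristics $\sigma_a$ of that partition. On each $\sigma_a$ the density ratio is then constant in time, hence equal to its initial value $v_0$, which again yields uniqueness of $u$ $\rho\Leb{d+1}$-a.e.
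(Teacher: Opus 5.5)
Your main argument is correct and is the paper's proof: apply Theorem~\ref{uniqueness_rho'} to the nonnegative density $(u+M)\rho$. After extension by zero, its divergence is concentrated on $\{t=0\}\cup\{t=T\}$ and its inner trace at $t=0$ is $(u_0+M)\rho_0$, so it is determined by $u_0$ and $\rho$. Only your closing ``alternative'' via Theorem~\ref{BBL1BVexanduniqueness} is incomplete as sketched. Identifying the constant value of $v$ along $\sigma_a$ with $v_0(\sigma_a(0))$ requires ruling out several curves $\sigma_a$ emanating from the same point at $t=0$. Untanglement on open time intervals does not exclude this without the backward extension $\rho+\uno_{(-1,0)}\rho_0$ used in the proof of Theorem~\ref{uniqueness_rho'}.
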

\begin{proof}
 The proof follows by applying Theorem~\ref{uniqueness_rho'} (to the nonnegative function $(u+M)\rho$, with $M \ge \|u\|_{L^\infty})$.
 \qedhere
\end{proof}

We recall however that the approach in the original work \cite{Bia_Bo_2020} was to establish these well-posedness results through renormalization along
characteristics (while in Section~\ref{sec:sbd} of this paper, dealing with vector fields of bounded deformation, renormalization is
computed at the Eulerian level). Indeed, let us see how Theorem~\ref{BBL1BVexanduniqueness} also allows to deduce a chain rule formula by basically factorizing the problem into 1-dimensional ones. \\
To begin with, notice that, given a partition via characteristics for $\rho\BB$, from \eqref{eqpartition1} we obtain
the distributional identities
$$
v\rho\BB=\int_{\mathcal{A}} (\mathrm{Id}, \sigma_a)_\# (v_aw_a (1,\sigma_a')\mathscr{L}^1) \, d{\bf m}(a),
\qquad
D\cdot (v\rho\BB)=\int_{\mathcal{A}} (\mathrm{Id}, \sigma_a)_\#\frac{d}{dt}(v_a w_a) \,d\mathbf{m}(a)
$$
for any $v \in L^\infty({\rm supp}(\rho))$, with $v_a:=v \circ\sigma_a$. 
If $\rho'=v\rho$ fits into the setting of Theorem~\ref{BBL1BVexanduniqueness}, we have some additional structure, namely the following.

\begin{lemma}\label{lem:uw}
Let $\bb,\,\rho, \, \rho'$ be as in Theorem \ref{BBL1BVexanduniqueness}, and let $u \in L^\infty(\R^{d+1})$ be such that $\rho'=u\rho$. Let
$\mathbf{m}$, $(\sigma_a)_{a \in \mathcal{A}}$, $(w_a)_{a \in \mathcal{A}}$ be a  partition via characteristics of $\rho\BB$. 
Then 
$$
u_a w_a \in BV(I_a)\qquad\text{for $\mathbf{m}$-a.e. $a \in \mathcal{A}$}.
$$
\end{lemma}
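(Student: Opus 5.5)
The plan is to use Theorem~\ref{BBL1BVexanduniqueness}, which says that $P_{min}$, the minimal partition via characteristics of $\rho\BB$, is also a partition via characteristics for $\rho'\BB=u\rho\BB$. First I reduce to the case where the given partition ${\bf m}$, $(\sigma_a)$, $(w_a)$ is $P_{min}$. A general partition refines into the minimal one by splitting each $\sigma_a$ into the countably many subcurves on which $\tilde w_a>0$. By Lemma~\ref{lemmasubsupports} and \eqref{formuladensitasottoclasse}, $w_a$ is then a sum of pieces with disjoint domains, and by \eqref{eq_goodsumderivatives} there are no cancellations of $Dw_a$ at the junctions. So if $u_a w_a$ is $BV$ on each piece, with total variation controlled by the corresponding pieces of $|\mu'|$, summing gives $u_aw_a\in BV(I_a)$. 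I expect that the uniqueness of the minimal partition up to isomorphism (the proposition before Proposition~\ref{Untangledsupportofunion}) lets me identify the index spaces, so I focus on $P_{min}$.

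Theorem~\ref{BBL1BVexanduniqueness} gives
$$\rho'\Leb{d+1}=\int_{\mathcal A}(\mathrm{Id},\sigma_a)_\#(w'_a\Leb{1})\,d{\bf m}'(a)$$
with the same curves, a measure ${\bf m}'$ and densities $w'_a$ satisfying $Dw'_a=\mu'_a\in\Measures{\R}$ by \eqref{eqpartition2}. The measure ${\bf m}'$ need not equal ${\bf m}$. Since $\rho'\le C\rho$ and the disintegration is unique, I expect ${\bf m}'\ll{\bf m}$. Writing ${\bf m}'=k\,{\bf m}$, uniqueness of the disintegration of $\rho'\Leb{d+1}=u\rho\Leb{d+1}$ along the same family of curves (the pushforwards $(\mathrm{Id},\sigma_a)_\#$ are concentrated on the disjoint graphs $P^{-1}(a)$) then gives
$$u_a w_a = k(a)\,w'_a\qquad\text{$\Leb{1}$-a.e. on $I_a$, for ${\bf m}$-a.e. $a$.}$$
The factor $k(a)$ is a constant along each curve, and $w'_a\in BV(\R)$, so $u_aw_a\in BV(I_a)$ follows, with $D(u_aw_a)=k(a)\mu'_a$.

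The main obstacle is making this identification rigorous. Two things must be checked. First, the $\sigma$-algebra $\mathcal F$ on $\mathcal A$ is the one induced by $P$; since $P^{-1}(a)$ is the graph of $\sigma_a$, both disintegrations are disintegrations of measures on $\mathscr G(\Delta)$ with respect to the same map $P$, and Theorem~\ref{thm:disinte} applies, yielding $k$ through a Radon--Nikodym argument on $(\mathcal A,\mathcal F)$. Second, I must handle the set where $k(a)=0$ or ${\bf m}'$ charges a null set of ${\bf m}$. Where $k(a)=0$, we get $u_aw_a=0$ a.e., which is trivially $BV$. Absolute continuity ${\bf m}'\ll{\bf m}$ should follow from $\rho'\le C\rho$ by testing against $P$-saturated Borel sets, on which both measures are computed by integrating the masses $\int w_a$ and $\int w'_a$; these masses are positive because the partitions are minimal.
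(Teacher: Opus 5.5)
Your proposal is correct and follows the paper's argument. Theorem~\ref{BBL1BVexanduniqueness} makes the partition of $\rho\BB$ also a partition via characteristics of $\rho'\BB$, and uniqueness of the disintegration of $u\rho\Leb{d+1}$ along the same curves identifies $u_aw_a$ with the $BV$ weight $w'_a$. Your density $k(a)=d{\bf m}'/d{\bf m}$ and your reduction from a general partition to $P_{min}$ make explicit two points the paper passes over: it tacitly takes ${\bf m}'={\bf m}$, and it asserts the general case only parenthetically.

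One small correction: positivity of $\int w'_a\,dt$ for ${\bf m}'$-a.e.\ $a$ does not follow from minimality, since the partition need not be minimal for $\rho'\BB$. It follows instead from \eqref{eq:formulawa}: $w'_a$ is obtained from probability measures concentrated on curves with nondegenerate time intervals.
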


\begin{proof}
Thanks to Theorem~\ref{BBL1BVexanduniqueness}, $P$ is also a partition via characteristics for $\rho'\BB$ (as this is true for the minimal partition $P_{\rm min}$). Then its weight function $w_a' \in BV(I_a)$ along the curves of the partition  must agree with $u_a w_a$ for $\mathbf{m}$-a.e. $a \in \mathcal{A}$.\qedhere
\end{proof}

\begin{proposition}\label{prop:chainrule}
Let $h\in C^1(\R)$,  $\bb$ be such that all Lagrangian representations related to $\bb$ are untangled and $\rho\in\mathcal{R}^{\mathcal{I}}_+$ for some bounded open interval $\mathcal{I}$. Let ${\bf m}$, $(\sigma_a)_{a \in \mathcal{A}}$, $(w_a)_{a \in \mathcal{A}}$, $(\mu_a)_{a \in \mathcal{A}}$ be associated to the minimal partition via characteristics $P_{min}$ for $\rho\BB$. Let also $u \in L^\infty(\R^{d+1})$ be such that $u_a w_a \in BV_{\rm loc}(I_a)$ for $\mathbf{m}$-a.e. $a \in \mathcal{A}$, and denote $\mu_a':=D(u_a w_a)$. \\
 Then, for $\mathbf{m}$-a.e. $a \in \mathcal{A}$ the functions $u_a$ and $h(u_a)w_a$ belong to $BV_{\rm loc}(I_a)$ and
$D(h(u_a) w_a)$ is representable by
    \begin{align}
        &\sum_i \,[h(u_a(t_i^+))w_a(t_i^+)-h(u_a(t_i^-))w_a(t_i^-)]\delta_{t_i}+h'(\tilde{u}_a) (Du_a)^{\rm cont}\tilde{w}_a+h(\tilde{u}_a)(Dw_a)^{\rm cont}
        \label{eq:mu_a_h} \\
        &=\sum_i \, [h(u_a(t_i^+))w_a(t_i^+)-h(u_a(t_i^-))w_a(t_i^-)]\delta_{t_i}+h'(\tilde{u}_a)(\mu'_a)^{\rm cont}+(h(\tilde{u}_a)-\tilde{u}_ah'(\tilde{u}_a))\mu_a^{\rm cont},\nonumber
    \end{align}
where $\{t_i\}_i$ are jump points either of $u_a$ or of $w_a$.
\end{proposition}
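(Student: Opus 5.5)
The whole statement reduces to a one-dimensional computation along each characteristic $\sigma_a$, $a\in\mathcal A$. The partition is minimal, so $\tilde w_a>0$ on $I_a$ for ${\bf m}$-a.e. $a$. We also know $w_a\in BV_{\rm loc}(I_a)$ with $Dw_a=\mu_a$ by \eqref{eqpartition2}, and $u_aw_a\in BV_{\rm loc}(I_a)$ with $D(u_aw_a)=\mu'_a$ by assumption. Fix such a good $a$; from now on everything is a statement about functions of one real variable.

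\emph{Step 1: $u_a\in BV_{\rm loc}(I_a)$.} The function $\tilde w_a$ is lower semicontinuous and strictly positive on $I_a$, so on every compact $K\subset I_a$ it has a positive minimum $c_K$. Write $u_a=(u_aw_a)/w_a$. The map $(p,q)\mapsto p/q$ is Lipschitz on $[-M,M]\times[c_K,\infty)$, and in one dimension a Lipschitz function of a pair of $BV$ functions is again $BV$, with the pointwise variation estimate. This gives $u_a\in BV(K)$.

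Next, $h$ is $C^1$ and $u_a$ is bounded, so $h(u_a)\in BV_{\rm loc}$. A product of two bounded $BV$ functions of one variable is $BV$, hence $h(u_a)w_a\in BV_{\rm loc}(I_a)$.

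\emph{Step 2: the formula.} I would use the one-dimensional Vol'pert chain rule and product rule. Decompose every derivative into a jump part (on the at most countable set $\{t_i\}$ of jump points of $u_a$ or $w_a$) and a diffuse part $D^{\rm cont}=D^a+D^c$. On the diffuse parts, the precise representatives coincide with the one-sided limits, which coincide with $\tilde u_a,\tilde w_a$ outside the $t_i$. This yields
\[
D^{\rm cont}(h(u_a)w_a)=h'(\tilde u_a)\tilde w_a(Du_a)^{\rm cont}+h(\tilde u_a)(Dw_a)^{\rm cont}.
\]
The jump part at $t_i$ is just the difference of the one-sided traces of $h(u_a)w_a$. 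Together these give the first line of \eqref{eq:mu_a_h}.

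For the second line, apply the same product rule to $u_aw_a$:
\[
(\mu'_a)^{\rm cont}=\tilde w_a(Du_a)^{\rm cont}+\tilde u_a(Dw_a)^{\rm cont}.
\]
Solve this for $\tilde w_a(Du_a)^{\rm cont}$ and substitute, recalling $(Dw_a)^{\rm cont}=\mu_a^{\rm cont}$. This produces the coefficient $h(\tilde u_a)-\tilde u_ah'(\tilde u_a)$ in front of $\mu_a^{\rm cont}$.

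\emph{Main obstacle.} The delicate point is the measurability and the a.e. bookkeeping, rather than the calculus. Two things need checking. First, the representative $u_a=u\circ\sigma_a$ must be well defined $\Leb1$-a.e. on $I_a$ for ${\bf m}$-a.e. $a$. I would get this from the disintegration \eqref{eqpartition1}: modifying $u$ on a $\rho\Leb{d+1}$-negligible set changes $u_a$ only on a $w_a\Leb1$-null set, hence an $\Leb1$-null set, because $w_a>0$. Second, the positivity lower bound for $\tilde w_a$ must be applied with the lower semicontinuous representative, so that it really holds on compact subsets. The remaining one-dimensional $BV$ facts (the quotient is $BV$, the Vol'pert product and chain rules) are standard and I would simply cite them, e.g. \cite{AFP_00}.
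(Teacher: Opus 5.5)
Your proposal is correct and follows the paper's argument. Minimality gives $\inf_K\tilde w_a>0$ on compact $K\subset I_a$, hence $u_a=(u_aw_a)/w_a\in BV_{\rm loc}(I_a)$; the formula then follows from the one-dimensional Vol'pert chain rule and Leibniz rule, with the second line obtained by eliminating $\tilde w_a(Du_a)^{\rm cont}$ through the product rule for $u_aw_a$. The only cosmetic difference is that the paper first proves $w_a^{-1}\in BV_{\rm loc}(I_a)$ via the chain rule and then applies the Leibniz rule, whereas you compose the pair $(u_aw_a,w_a)$ with a Lipschitz map.
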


\begin{proof}
As $P_{min}$ is a minimal partition for $\rho\BB$, we have $\inf_{J_a}\tilde{w}_a>0$ on any compact sub-interval $J_a$ of $I_a$, thus 
Vol'pert's Chain Rule and the Leibniz formula for the products of locally bounded $BV$ functions (see e.g. \cite[Theorem~3.96, Example~3.97]{AFP_00})
yield that $w_a^{-1} \in BV_{\rm loc}(I_a)$ and $u_a=w_a^{-1}(u_aw_a)\in BV_{\rm loc}(I_a)$ as well.
The formula \eqref{eq:mu_a_h} is then a simple consequence of the chain rule applied to $h(u_a)$ and of the Leibniz rule
applied to $h(u_a)$ and $w_a$.\qedhere
\end{proof}

\begin{corollary}[Chain Rule]\label{chain_BB}
Under the assumptions of Proposition~\ref{prop:chainrule} one has 
$$
D\cdot(h(u)\rho\BB\mathscr{L}^{d+1})=\int (\mathrm{Id},\sigma_a)_\# D(h(u_a)w_a) \, d\mathbf{m}(a),
$$
for any $h \in C^1(\R)$, with $D(h(u_a)w_a)$ representable as in \eqref{eq:mu_a_h}.
\end{corollary}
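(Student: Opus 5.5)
The plan is to reduce the identity to the one-dimensional formula of Proposition~\ref{prop:chainrule} by means of the disintegration \eqref{eqpartition1}. First I write $h(u)\rho\BB$ in terms of the partition. Since $\rho\Leb{d+1}=\int_{\mathcal A}(\mathrm{Id},\sigma_a)_\#(w_a\Leb{1})\,d{\bf m}(a)$, and $\BB(t,\sigma_a(t))=(1,\sigma_a'(t))$ for a.e.\ $t\in I_a$ (the $\sigma_a$ are characteristics), the first distributional identity displayed before Lemma~\ref{lem:uw}, applied with $v=h(u)\in L^\infty$, gives
$$
h(u)\rho\BB\Leb{d+1}=\int_{\mathcal A}(\mathrm{Id},\sigma_a)_\#\bigl(h(u_a)w_a(1,\sigma_a')\Leb{1}\bigr)\,d{\bf m}(a).
$$
Here $h(u_a)=h(u)\circ\sigma_a$ is well defined $w_a\Leb{1}$-a.e.\ for ${\bf m}$-a.e.\ $a$. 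The reason is that modifying $u$ on a $\rho\Leb{d+1}$-null set changes $u_a$ only on a $w_a\Leb{1}$-null set for ${\bf m}$-a.e.\ $a$, again by \eqref{eqpartition1}.

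Next I test against $\varphi\in C^\infty_c(\R^{d+1})$ and use Fubini on $\mathcal A$. By the chain rule along the absolutely continuous curve $t\mapsto(t,\sigma_a(t))$, one has $\frac{d}{dt}\varphi(t,\sigma_a(t))=\nabla_{t,x}\varphi\cdot(1,\sigma_a')$. Hence
$$
-\langle D\cdot(h(u)\rho\BB),\varphi\rangle=\int_{\mathcal A}\int_{I_a}h(u_a)w_a\,\frac{d}{dt}\varphi(t,\sigma_a(t))\,dt\,d{\bf m}(a).
$$
For ${\bf m}$-a.e.\ $a$, Proposition~\ref{prop:chainrule} gives $h(u_a)w_a\in BV_{\rm loc}(I_a)$, and the inner integral is $-\int\varphi(t,\sigma_a(t))\,dD(h(u_a)w_a)$. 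This yields the claimed representation, with $D(h(u_a)w_a)$ given by \eqref{eq:mu_a_h}.

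The main obstacle is justifying the integration by parts on each $I_a$ and the subsequent integrability in $a$. The function $h(u_a)w_a$ is only $BV_{\rm loc}(I_a)$, while $\varphi(\cdot,\sigma_a(\cdot))$ need not vanish near $\partial I_a$. I would handle this using the fact that, $\Delta$ being $\sigma$-compact with finite-length curves, $w_a\in BV(\R)$ and $w_a$ vanishes outside $I_a$. The product $h(u_a)w_a$ is then bounded by $\|h\|_{L^\infty([-M,M])}|w_a|$ with $M=\|u\|_\infty$.

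Its total variation near the endpoints is controlled through \eqref{eq:mu_a_h}. The second line of \eqref{eq:mu_a_h} bounds $|D(h(u_a)w_a)|$ by $C_h(|\mu'_a|+|\mu_a|)$, where $C_h$ depends on $\sup_{[-M,M]}(|h|+|h'|)$. This bound includes the jump terms, since $|h(u^+)w^+-h(u^-)w^-|\le C_h(|u^+w^+-u^-w^-|+|w^+-w^-|)$. By Lemma~\ref{lem:uw} and \eqref{eqpartition2}, both $\mu_a$ and $\mu'_a$ are finite measures on $\R$. Therefore $h(u_a)w_a$ extended by $0$ belongs to $BV(\R)$, and the integration by parts holds on all of $\R$.

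Finally, integrability in $a$ follows from \eqref{eqpartition3} applied to $\rho$ and $\rho'=u\rho$. These give $\int|\mu_a|\,d{\bf m}=|\mu|(\R^{d+1})$ and $\int|\mu'_a|\,d{\bf m}=|\mu'|(\R^{d+1})$, the latter because the partition is also a partition via characteristics for $\rho'\BB$ by Theorem~\ref{BBL1BVexanduniqueness}. This makes the $a$-integral of measures well defined and justifies Fubini. Measurability of $a\mapsto D(h(u_a)w_a)$ comes from its expression as a distributional derivative of the measurable family $h(u_a)w_a$.
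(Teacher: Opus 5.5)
Your overall route is the paper's. There the corollary is read off from the disintegration identity displayed just before Lemma~\ref{lem:uw}, applied with $v=h(u)$, together with \eqref{eq:mu_a_h}, and nothing more is said.

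The gap lies in your justification of the integration by parts up to $\partial I_a$ and of the integrability in $a$. Both steps rest on Lemma~\ref{lem:uw} and Theorem~\ref{BBL1BVexanduniqueness} applied to $\rho'=u\rho$. These require $u\rho\ge 0$ and $D\cdot(u\rho\BB)\in\Measures{\R^{d+1}}$, and neither is among the hypotheses of Proposition~\ref{prop:chainrule}: there $u$ is an arbitrary bounded, possibly signed, function with $u_aw_a\in BV_{\rm loc}(I_a)$. Under those hypotheses alone the step genuinely fails, since $u_aw_a$ may have infinite variation accumulating at $\partial I_a$. Then $h(u_a)w_a$ extended by $0$ need not lie in $BV(\R)$, and the right-hand side of the corollary is not a well-defined integral of locally finite measures. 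So an extra assumption is needed (the paper leaves it implicit), and you should state it rather than borrow it silently from Lemma~\ref{lem:uw}.

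In the situation where the corollary is actually used, $u=(\rho_1-\rho_2)/(\rho_1+\rho_2)$ is signed, so Theorem~\ref{BBL1BVexanduniqueness} cannot be applied to $u\rho$ itself. The fix is to apply Lemma~\ref{lem:uw} and Theorem~\ref{BBL1BVexanduniqueness} to $\rho_1$ and $\rho_2$ separately, with $\rho=\rho_1+\rho_2$ and $u_i=\rho_i/\rho$. Then use $\mu'_a=D(u_{1,a}w_a)-D(u_{2,a}w_a)$, where $\int(\mathrm{Id},\sigma_a)_\#|D(u_{i,a}w_a)|\,d{\bf m}(a)=|D\cdot(\rho_i\BB)|$ by \eqref{eqpartition3} for $\rho_i\BB$.

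Two smaller points:
\begin{itemize}
\item Since $\mu$ is only locally finite, $\int|\mu_a|\,d{\bf m}=|\mu|(\R^{d+1})$ may be $+\infty$. What Fubini actually needs is the finiteness of $\int_{\mathcal A}\int|\varphi|\circ(\mathrm{Id},\sigma_a)\,d(|\mu_a|+|\mu'_a|)\,d{\bf m}(a)$, and this is what \eqref{eqpartition3} provides.
\item The endpoint atoms of $D(h(u_a)w_a)$ can be pushed forward by $(\mathrm{Id},\sigma_a)$ only if $\sigma_a$ has a limit at the corresponding endpoint of $I_a$. This does hold. A nonzero one-sided limit of $h(u_a)w_a$ there forces a positive one-sided limit of $w_a$. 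That means a positive $\eeta_a$-fraction of curves of $\Delta$ starts or ends there, and each of them is uniformly continuous up to the ends of its domain.
\end{itemize}
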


Now, putting together Lemma \ref{lem:uw} and Corollary \ref{chain_BB}, we obtain another proof of the uniqueness for (CE). Indeed,  pick $\rho_1$, $\rho_2 \in \mathcal{R}_+$ with the same datum at $t=0$ such that $D \cdot (\rho_i\BB)$ is a measure vanishing in $(0,T) \times \R^d$ ($i=1,2$). Then, we have that $u_a w_a \in BV(I_a)$ when $u=\frac{\rho_1-\rho_2}{\rho_1+\rho_2}$, by applying Lemma \ref{lem:uw} with $\rho=\rho_1+\rho_2$ and $\rho'=\rho_i$ for $i=1,2$. Then, exploiting the disintegration \eqref{eq:mu_a_h}, we can argue as in the proof of Theorem~\ref{thm:main2}, choosing $h$ as a $C^1$ approximation of $h(z)=z^+$.
Indeed, in the case $h(z)=z^+$, an application of Lemma \ref{lem:cylindrical} to $(\rho_1-\rho_2)^+ \in \mathcal{R}_+$ in combination with Corollary \ref{chain_BB} yields that $\int (\rho^1_t-\rho^2_t)^+\,dx=0$ for all $t \in (0,T)$ if we establish that (for $\mathbf{m}$-a.e. $a$)
$$\sum_i[h(u_a(t_i^+))w_a(t_i^+)-h(u_a(t_i^-))w_a(t_i^-)]\delta_{t_i}+h'(\tilde{u}_a)(\mu'_a)^{\rm cont}+(h(\tilde{u}_a)-\tilde{u}_ah'(\tilde{u}_a))\mu_a^{\rm cont}=0.$$
Here the second term vanishes since $\mu_a'=0$ and the third term vanishes by the choice of $h$. Finally, for the first term, we notice that applying Corollary \ref{chain_BB} to $\rho_j \in \mathcal{R}_+$ ($j=1,2$) the condition $D \cdot (\rho_j\BB)=0$ in $(0,T) \times \R^d$ gives that
$$\frac{\rho_j(\sigma_a(t_i^+))}{(\rho_1+\rho_2)(\sigma_a(t_i^+))}w_a(t_i^+)=\frac{\rho_j(\sigma_a(t_i^-))}{(\rho_1+\rho_2)(\sigma_a(t_i^-))}w_a(t_i^-)$$
for all $t_i$ discontinuity points either of $\frac{\rho_j}{\rho_1+\rho_2} \circ \sigma_a$ or $w_a$, which proves the continuity of $h(u_a)w_a$ in such points. \\
We highlight the fact that the non-negativity of $\rho_1$, $\rho_2$ is really a necessary condition for the validity of the result 
(recall Example~\ref{example:nogood}, as the vector field $\bb(x)={\rm sign}(x)$ of the example is one-sided nearly incompressible).

\bibliographystyle{siam}
\small
\bibliography{Bibliography}

\end{document}